\newtheorem{theorem}{Theorem}[section]
\newtheorem{lemma}[theorem]{Lemma}
\newtheorem{proposition}[theorem]{Proposition}
\newtheorem{corollary}[theorem]{Corollary}
\theoremstyle{remark}
\newtheorem{remark}[theorem]{\it \bf{Remark}\/}
\newenvironment{acknowledgement}{\noindent{\bf Acknowledgement.~}}{}
\numberwithin{equation}{section}
\def\section{\@startsection{section}{1}\z@{1.5\linespacing\@plus\linespacing}{.5\linespacing}{\normalfont\bfseries\large\centering}}
\newcommand{\be}{\begin{equation}}
	\newcommand{\ee}{\end{equation}}
\newcommand{\bea}{\begin{eqnarray}}
	\newcommand{\eea}{\end{eqnarray}}
\newcommand{\bee}{\begin{eqnarray*}}
	\newcommand{\eee}{\end{eqnarray*}}
 \newcommand{\la}{\left\langle}
\newcommand{\ra}{\right\rangle}
\def\pa{\partial}
\def\na{\nabla}
\def\NN{\mathbb{N}}
\def\RR{\mathbb{R}}
\def\ZZ{\mathbb{Z}}
\def\eps{\vare}
\def\ep{\varepsilon}
\def\calB{{\mathcal B}}
\def\calE{{\mathcal E}}
\def\calF{{\mathcal F}}
\def\calL{{\mathcal L}}
\def\calM{{\mathcal M}}
\def\calQ{{\mathcal Q}}
\def\calR{{\mathcal R}}
\def\supess{\mathop{\operator@font Sup\,ess}}
\def\NN{\mathbb{N}}
\def\RR{\mathbb{R}}
\def\ZZ{\mathbb{Z}}
\def\e{\varepsilon}
\def\Dein{\Delta^{-1}}
\def\eps{\varepsilon_s}
\def\epu{\varepsilon_u}
\newcommand{\norm}[1]{\left\lVert#1\right\rVert}
\begin{document}
	
	\title[]{Finite time blowup for Keller-Segel equation with logistic damping in three dimensions}

\author[J. Liu]{Jiaqi Liu}
\address{Department of Mathematics, University of Southern California, Los Angeles, California 90089, USA}
\email{jiaqil@usc.edu}

\author[Y. Wang]{Yixuan Wang}
 \address{Applied and Computational Mathematics, California Institute of Technology, Pasadena, California 91125, USA}
\email{roywang@caltech.edu}

\author[T. Zhou]{Tao Zhou}
 \address{Department of Mathematics\\
National University of Singapore\\
Singapore\\
119076\\
Singapore}
\email{zhoutao@u.nus.edu}

	\maketitle

    \begin{abstract}
    The Keller-Segel equation, a classical chemotaxis model, and many of its variants have been extensively studied for decades. In this work, we focus on $3$D Keller-Segel equation with a quadratic logistic damping term $-\mu \rho^2$ (modeling density-dependent mortality rate) and show the existence of finite-time blowup solutions with nonnegative density and finite mass for any $\mu \in \big[0,\frac{1}{3}\big)$. This range of $\mu$ is sharp; for $\mu \ge \frac{1}{3}$, the logistic damping effect suppresses the blowup as shown in \cite{Kang_Stevens_globalexistence_p=2_and_mu_critical,Tello_Winkler_globalexi_for_p=2}.
    A key ingredient is to construct a self-similar blowup solution to a related aggregation equation as an approximate solution, with subcritical scaling relative to the original model.
    Based on this construction, we employ a robust weighted $L^2$ method to prove the stability of this approximate solution, where modulation ODEs are introduced to enforce local vanishing conditions for the perturbation lying in a singular-weighted $L^2$ space. As a byproduct, we exhibit a new family of type I blowup mechanisms for the classical $3$D Keller-Segel equation.
    \end{abstract}
    
    \section{Introduction}
Chemotaxis is a widespread natural phenomenon, and it occurs when organisms, such as body cells or bacteria, detect and move toward chemical signals in their surroundings. A principal mathematical description of chemotaxis is provided by the Keller-Segel system:
\be
\begin{cases}
\partial_t \rho  = \Delta \rho - \na \cdot (\rho \na c), \\
\Delta c + \rho = 0,
\end{cases}
\tag{KS}
\label{equation, Keller-Segel}
\ee
 where $\rho$ represents the density of the bacteria and $c$ denotes the concentration of the self-emitted chemical substance. The model captures two key processes: the diffusive effect of random bacterial motion and the directed movement of bacteria toward the highest concentration of the chemical. For a broader introduction to chemotaxis, see \cite{MR3932458, MR2013508, MR2073515}.

 Biologically, beyond the competition between diffusion and bacterial aggregation, the limitation of resources and overcrowding necessitate introducing the bacterial mortality rate that further suppresses the aggregation process. In particular, we consider the following $3$D coupled parabolic-elliptic Keller-Segel system with logistic damping
\be
\begin{cases}
\partial_t \rho = \Delta \rho -  \na \cdot (\rho \na c) - \mu \rho^2, \\
\Delta c +\rho = 0,
\end{cases}
\label{equation: KS with damping}
\tag{KS-D}
\ee
where $-\mu \rho^2$ ($\mu \ge 0$)  represents the logistic damping rate. For additional background on \eqref{equation: KS with damping} and related models, we refer the interested readers to \cite{Fuest_blowup_optimal_2021,MR2448428,MR1788983,MR540951,Tello_Winkler_globalexi_for_p=2}.

        \subsection{Background}

    \subsubsection{Classical Keller-Segel equation: global existence v.s. finite-time blowup}

\mbox{}

\vspace{0.1cm}

    For the classical Keller-Segel equation \eqref{equation, Keller-Segel}, the total mass $M(t) = \int \rho(t,x) dx$ is conserved for all time.  
Additionally, the system \eqref{equation, Keller-Segel} admits a one-parameter family of scaling invariance: 
\be
\rho(t,x)\to \frac{1}{\lambda^2} \rho\left(\frac{t}{\lambda^2}, \frac{x}{\lambda} \right), \ c(t,x)\to c\left( \frac{t}{\lambda^2}, \frac{x}{\lambda} \right), \quad \forall \; \lambda>0.
\label{scaling invariance}
\ee
We remark that this scaling invariance also holds for \eqref{equation: KS with damping}. 

In two dimensions, the total mass $M=8\pi$ serves as a crucial quantity determining the global existence and finite-time blowup of the system \eqref{equation, Keller-Segel}. Specifically,  it has been shown that finite-time blowup occurs whenever $M> 8 \pi$ with initial data $\rho_0 \in L_+^1((1+|x|^2),dx)$, while global-in-time existence with a uniform $L^\infty$ bound holds when $M<8\pi$, see \cite{Blanchet_Dolbeault_Perthame_globalexistence06, Dolbeault_Perthame_globalexistence04}.

Nevertheless, the critical mass threshold does not exist for the three-dimensional model \eqref{equation, Keller-Segel}. In fact, even with a tiny total mass, there is a radial solution that develops a finite-time singularity found by Nagai \cite{MR1361006}. Beyond the radial case, Corrias, Perthame, and Zaag \cite{MR2099126} demonstrated that blowup occurs whenever the second moment of the initial density is sufficiently small in comparison to the total mass, while weak solutions exist globally if the initial density has a suitably small $L^\frac{3}{2}$ norm. Interested readers can refer to \cite{MR4201903, MR3411100, MR3438649, MR3936129} for more results.

\subsubsection{Classical Keller-Segel equation: singularity formation}
\label{subsub: singularity formation of KS}

\mbox{}

\vspace{0.1cm}

Over the last several decades, the singularity formation for the classical Keller-Segel equation \eqref{equation, Keller-Segel} has been well studied. For the $2$D case, since the model is in the $L^1$ critical sense, Naito and Suzuki \cite{Naito_Suzuki_typeIIblowup} verified that any finite-time blowup solution to \eqref{equation, Keller-Segel} is of type II\footnote{The solution of \eqref{equation, Keller-Segel} exhibits type I blowup at $t=T$ if 
\begin{equation*}
    \limsup_{t \to T} (T-t) \| \rho(t) \|_{L^\infty} < \infty.
\end{equation*}
Otherwise, the blowup is of type II.
}.
In particular, Rapha\"{e}l and Schweyer \cite{Raphael_Schweyer_2DtypeII_blowup14} provided a precise construction of a radially stable finite-time blowup solution of the form
\be
\rho(t,x) \approx \frac{1}{\lambda^2(t)} U\left(\frac{x}{\lambda(t)} \right), \quad U(x) = \frac{8}{(1+|x|^2)^2}, \label{type II blowup: profile}
\ee
where the blowup rate
\[
\lambda(t) = \sqrt{T-t} e^{-\sqrt{\frac{|\ln (T-t)|}{2}}+ O(1)}, \quad t \to T.
\]
Subsequently, this result has been extended to several refined scenarios, including nonradial blowup, multi-bubble blowup without collision, and the simultaneous collision of two collapsing bubbles. For further details, we refer the interested reader to \cite{buseghin2023existence, Collot_Ghoul_Masmoudi_Nguyen_2DtypeII_blowup22, 2DKSmultibubble}.

Unlike the $2$D case, the $3$D Keller-Segel system exhibits a large variety of blowup mechanisms. 
Notably, there is a countable family of radial self-similar blowup solutions of the form
\be
\rho_s(t,x) = \frac{1}{T-t} Q_s\left( \frac{x}{\sqrt{T-t}} \right), \; 
\footnote{It exactly matches the scaling invariance \eqref{scaling invariance} of the classical Keller-Segel system \eqref{equation, Keller-Segel}.}
\label{ansatz: self-similar blowup solution usual scaling}
\ee
which have been identified in \cite{Brenner_Constantin_Leo_Schenkel_Venkataramani_steady_state_99, MR1651769,nguyen2025infinitely}.
In particular, there is an explicit self-similar solution given by
\be
\rho_{s*}(t,x) = \frac{1}{T-t}Q_{s*}\left( \frac{x}{\sqrt{T-t}} \right), \quad \text{ with } \quad Q_{s*}(x) = \frac{4(6+|x|^2)}{(2 + |x|^2)^2},
\label{profile}
\ee
whose radial stability has been verified by Glogi\'c and Sch\"orkhuber \cite{stable3DKS}. Recently, Li and the third author \cite{ksnsblowup,lizhou2025nonradial} extended this stability theory to the nonradial setting.
Beyond self-similar blowup solutions, other non-self-similar formations have also been identified. For example, Collot, Ghoul, Masmoudi, and Nguyen \cite{Collot_Ghoul_Masmoudi_Nguyen_3Dblowup_Collasping-ring_blowup23} discovered a collapsing-ring blowup solution, and
Nguyen, Nouaili, and Zaag \cite{nguyen2023construction} found a type I blowup solution with a log correction on the shrinking rate. Additionally, Hou, Nguyen, and Song \cite{hou2025axisymmetric} recently found a type II blowup solution under axisymmetry, whose local leading-order profile coincides with the rescaled stationary solution of the two-dimensional Keller-Segel equation as given in \eqref{type II blowup: profile}.

    \subsubsection{Blowup or no blowup with logistic damping}
    \label{subsub: logistic damping}

    \mbox{}

    \vspace{0.1cm}

    To detect the core mechanism behind the bacterial aggregation, we can rewrite the aggregation term in \eqref{equation, Keller-Segel} or \eqref{equation: KS with damping} as
\begin{equation*}
    \nabla \cdot (\rho \nabla \Dein \rho)  = \nabla \Dein \rho \cdot \nabla \rho + \rho^2.
\end{equation*}
In this expression, $\na \Dein \rho \cdot \na \rho$ is the advection term, which, along with the effect of the diffusion term, cannot lead to the finite-time blowup. Consequently, the main factor causing the blowup is $\rho^2$.

If the logistic damping term $-\mu \rho^2$ in \eqref{equation: KS with damping} is replaced by a stronger term of the form $-\mu \rho^p$ with $\mu>0$ and $p >2$, then the aggregation effect is always suppressed. Precisely, the smooth solution always exists globally in time, regardless of how concentrated the initial density is, see \cite{Tello_Winkler_globalexi_for_p=2}. 

In the scenario of quadratic damping
$-\mu \rho^2$ (corresponding to \eqref{equation: KS with damping}), Tello and Winkler \cite{Tello_Winkler_globalexi_for_p=2} proved that the global existence is guaranteed for any $\mu >0$ in two dimensions.
For higher dimension $d\geq 3$, they further demonstrated that global existence holds provided $\mu > \frac{d-2}{d}$. Subsequently, Kang and Stevens \cite{Kang_Stevens_globalexistence_p=2_and_mu_critical} verified the global existence in the critical case $\mu = \frac{d-2}{d}$ with $d \ge 3$. In the subcritical regime with $\mu \in \left(0, \frac{d-2}{d}\right)$, Fuest \cite{Fuest_blowup_optimal_2021} partially bridged the gap by proving that finite-time blowup occurs in a bounded domain for $\mu \in \left( 0, \frac{d-4}{d} \right)$ in higher dimensions $d \ge 5$. However, to the best of the authors' knowledge, 
prior to the present work, it remained an \textit{open problem} whether blowup occurs for $\mu \in (\frac{d-4}{d}, \frac{d-2}{d})$ with $d \ge 5$ and $\mu \in (0, \frac{d-2}{d})$ with $ 3 \le d \le 4$.

Regarding the weaker damping term $-\mu \rho^p$ with $1<p<2$, for higher dimensions $d \ge 5$, Winkler \cite{Winkler_blowup_N>=5_2011} verified the existence of finite-time blowup solution when $1 <p < \frac{3}{2} + \frac{1}{2d-2}$. Subsequently, for dimensions $d \in \{3,4\}$, Winkler \cite{Winkler_blowup_N=3_or_4_2018} demonstrated finite-time blowup for $1< p < \frac{7}{6}$. This range was later significantly extended by Fuest \cite{Fuest_blowup_optimal_2021}, where the author established the finite-time blowup for $1 < p< \frac{3}{2}$ in dimensions $d=3$ and for  $1 < p< 2$ in dimensions $d \ge 4$.

    \subsection{Main result}

    \mbox{}
    
    The principal objective of this work is to establish the \textit{optimal blowup result} to \eqref{equation: KS with damping} in three dimensions. Specifically, we establish the existence of a smooth finite-time blowup solution to \eqref{equation: KS with damping} with nonnegative density and finite mass for any $0 \le \mu < \frac{1}{3}$ in three dimensions.

    In the literature, most blowup results for chemotaxis equations are obtained either by tracking the evolution of some appropriate functional (such as the second moment) to obtain a contradiction  \cite{Blanchet_Dolbeault_Perthame_globalexistence06, MR2099126, Dolbeault_Perthame_globalexistence04}, or by working in the radial setting where the mass accumulation function satisfies an ODE  \cite{Fuest_blowup_optimal_2021}. However, the damping term $-\mu \rho^2$ in \eqref{equation: KS with damping} destroys the structures that these proofs rely on. In particular, it \textit{leads to a time-decreasing mass} and \textit{breaks the divergence form structure} that the classical Keller-Segel equation \eqref{equation, Keller-Segel} enjoys, thereby making these classical methods too limited to obtain the sharp blowup result for \eqref{equation: KS with damping}.

    Motivated by the various singularity formation results, 
     ranging from the nonlinear heat equation \cite{bricmont1994universality,MR3986939,hou20242,MR1427848}, nonlinear wave equation \cite{MR3537340,kim2022self}, nonliear Schr\"odinger equation \cite{MerleblowupNLSdefocusing,MR2729284}, incompressible fluids \cite{Chen_Hou_Euler_blowup_theory_2023, Chen_Hou_Euler_blowup_numerical_2023,Chen_Hou_Huang_De_Gregorio_eq_2021,hou2024blowup,MR4334974,MR4445341}, compressible fluids \cite{Javierblowup3DNS,chen2024vorticity,MR4445443},  we adopt a strategy based on the direct construction of finite-time blowup solutions via the stability analysis of an approximate blowup solution. And the precise statement of the main theorem is as follows: 
    
\begin{theorem}[Existence of finite-time blowup to \text{\eqref{equation: KS with damping}} with finite mass]
\label{thm: existence of blowup}
For any fixed $0 \le \mu < \frac{1}{3}$, let $j_0$ be an integer such that
\be 
j_0 \ge J:= \frac{3(1-\mu)}{1-3\mu} +1 >1,
\label{J: def}
\ee
it then follows that
\be
\beta = \beta(j_0,\mu) := \frac{1}{3(1-\mu)} + \frac{1}{2j_0} < \frac{1}{2}.
\label{beta: def, thm}
\ee
In addition, under these choices, there exists a radially nonnegative $\rho_0 \in C_0^\infty (\RR^3)$  \footnote{Since $\rho_0 \in C_0^\infty(\RR^3)$, the total mass of the system is necessarily finite. Additionally, with the nonnegativity of initial data $\rho_0$, an argument analogous to \cite[Theorem A.1]{ksnsblowup} ensures that the corresponding solution remains nonnegative in its lifespan.
}
such that the related smooth solution to \eqref{equation: KS with damping} 
exhibits finite-time blowup at $t=T<\infty$. Moreover,
the solution satisfies
\be
\rho(t,x) = \frac{1}{T-t} \left[ Q \left( \frac{x}{(T-t)^\beta} \right) + \ep \left( t, \frac{x}{(T-t)^\beta} \right) \right], \quad x \in \RR^3, \; t \in [0,T),
\label{blowup KS with damping}
\ee

\noindent where 

\noindent $\bullet$ (Existence of profile). $Q$ is a unique smooth
radial decreasing solution to the equation:
\be
    Q+\beta y \cdot \nabla_y Q = \nabla_y Q \cdot \nabla_y\Delta^{-1}_yQ + (1-\mu)Q^2,
    \label{PDE Q}
\ee
with the initial data
 \be
    Q(0) = \frac{1}{1-\mu} \quad \text{ and } \quad \partial_r^{(2j_0)} Q(0) = -(2j_0)!  <0.
    \label{smooth profile: initial data -1}
\ee

\noindent $\bullet$ (Smallness of error term).
There exists $s=s(\mu) \in \ZZ_{\ge 1}$ sufficiently large and constants $C>0$ and $\bar \epsilon >0$ such that the error term $\ep$ satisfies
\[
\| \ep(t) \|_{H^s(\RR^3)} \le C \left( T-t \right)^{\bar \epsilon}, \quad \forall \; 0 \le t <T.
\]
    
\end{theorem}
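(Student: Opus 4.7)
The strategy is to realize the blowup solution as a self-similar profile $Q$ plus a small, controlled error $\ep$, exploiting that the constraint $\beta<1/2$ makes the Laplacian a \emph{subcritical} perturbation of the aggregation-reaction dynamics under the self-similar scaling. Indeed, inserting $\rho(t,x) = (T-t)^{-1}\bigl[Q(y) + \ep(t,y)\bigr]$ with $y = x(T-t)^{-\beta}$ into \eqref{equation: KS with damping}, every term except $\Delta_x\rho$ scales like $(T-t)^{-2}$, while the diffusion produces a strictly subcritical forcing of relative size $(T-t)^{1-2\beta}\to 0$. Dropping diffusion at the profile level yields exactly \eqref{PDE Q}. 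The proof thus splits into: (i) constructing the radial smooth profile $Q$ and (ii) proving nonlinear stability of $Q$ against the small diffusive forcing in self-similar time.

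For (i), I would write \eqref{PDE Q} in radial coordinates using the mass function $m(r) = \int_0^r s^2 Q(s)\,ds$, which converts $\na Q\cdot\na\Dein Q$ into $Q'(r)m(r)/r^2$ and gives the first-order ODE
\be
Q'(r)\Bigl(\beta r - \frac{m(r)}{r^2}\Bigr) = Q(r)\bigl[(1-\mu)Q(r) - 1\bigr].
\label{ode-Q}
\ee
Evaluating at $r=0$ forces $Q(0)=1/(1-\mu)$, killing the right-hand side at the origin. Expanding $Q(r)=Q(0)+Q_k r^k+O(r^{k+1})$ and matching leading powers in \eqref{ode-Q} gives $\tfrac{k}{2j_0}Q_k=Q_k$, which has a nontrivial solution only at the resonance $k=2j_0$; normalizing $Q_k=-1$ reproduces \eqref{smooth profile: initial data -1} and fixes the unique smooth radial branch. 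A power-series/Picard construction gives local smoothness, ODE continuation and monotonicity (via the sign of $\beta r-m/r^2$ along a suitable barrier) yield global smooth existence on $[0,\infty)$, and an asymptotic analysis of \eqref{ode-Q} produces the polynomial tail $Q(r)\sim r^{-1/\beta}$ at infinity.

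For (ii), I pass to self-similar variables $\tau=-\log(T-t)$, $y=x\,e^{\beta\tau}$, write $\rho = e^\tau[Q(y)+\ep(\tau,y)]$, and use \eqref{PDE Q} to derive
\be
\pa_\tau \ep + \calL\ep = \calN(\ep) + e^{(2\beta-1)\tau}\Delta_y\bigl(Q+\ep\bigr),\qquad \calL\ep = \ep + \beta y\cdot\na\ep - \na Q\cdot\na\Dein\ep - \na\ep\cdot\na\Dein Q - 2(1-\mu)Q\,\ep,
\label{perturbation-pde}
\ee
with $\calN$ purely quadratic and the forcing decaying like $e^{-(1-2\beta)\tau}$. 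The operator $\calL$ has finitely many unstable eigenmodes (at least the one generated by the time-translation/dilation symmetry), which I control by modulating a finite-dimensional family of parameters: the blowup time $T(\tau)$ together with a small number of scaling/translation parameters, satisfying ODEs driven by orthogonality against the unstable eigenfunctions. The core of the argument is an energy estimate for $\ep$ in a weighted Sobolev space $H^s_w$ whose weight $w(y)$ is \emph{singular at the origin}; the singularity is admissible precisely because the modulation simultaneously enforces local vanishing conditions $\partial^\alpha\ep(\tau,0)=0$ up to a prescribed order, and the count of these vanishing conditions must match the count of unstable modes of $\calL$. Combined with commutator estimates exploiting the sharp decay of $Q$ to handle the nonlocal term $\na\Dein$, this yields a coercivity inequality
\[
\tfrac{d}{d\tau}\|\ep\|_{H^s_w}^2 \;\leq\; -\nu\,\|\ep\|_{H^s_w}^2 + C e^{-c\tau} + \text{(higher-order in $\ep$)},
\]
and a Brouwer topological argument selects the unstable projections of the initial data so that the bootstrap closes, giving $\|\ep\|_{H^s_w}\lesssim e^{-\bar\ep\tau}$; translating back to physical variables produces \eqref{blowup KS with damping}. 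The principal obstacle is matching the three interlocking counts --- the order of the singular weight, the number of vanishing conditions imposed by modulation, and the number of unstable eigenmodes of $\calL$ --- while simultaneously proving weighted coercivity in the presence of the nonlocal operator $\na\Dein$. Nonnegativity, compact support and finite mass of $\rho_0$ are finally secured by choosing the initial data as a smooth truncation of $Q$ outside a large ball and invoking the parabolic maximum principle, analogously to \cite[Appendix A]{ksnsblowup}.
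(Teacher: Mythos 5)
Your overall architecture coincides with the paper's: drop the diffusion to get the profile equation \eqref{PDE Q}, fix $\beta$ through the resonance at $k=2j_0$ in the Taylor expansion at the origin, build $Q$ by a power-series plus continuation argument, and then run a singular-weighted energy estimate with modulation and a Brouwer-type selection of unstable data. The genuine gap is in the stability bookkeeping. You propose to modulate the blowup time and a few scaling/translation parameters, with ODEs ``driven by orthogonality against the unstable eigenfunctions'', and you require that the number of local vanishing conditions at the origin \emph{match} the number of unstable modes of $\calL$. Neither piece survives contact with this problem. First, no spectral data for $\calL$ is available: $Q$ is known only through its recursively defined Taylor coefficients and decay upper bounds, so eigenfunctions of the nonlocal operator cannot be exhibited and orthogonality conditions against them cannot be implemented; moreover two or three symmetry parameters cannot enforce vanishing of $\ep$ at $y=0$ to the high order that the singular weight demands. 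Second, and more seriously, the counts cannot be made to match: the coercivity of Proposition \ref{prop: coer} is only obtained for a weight $|y|^{-A}+B$ with $A\gg 4j_0+3$ and with no sharp value of $A$ accessible, so the part of the perturbation placed in $L^2_w$ must vanish at the origin to order about $A/2$, forcing one to modulate $K+1=A/4+1$ coefficients $c_j$ of an interior polynomial correction $\epu=\sum_{j\le K}c_j\chi|y|^{2j}$, while only $j_0+1$ of these directions are genuinely unstable (the linear rates $\frac{j_0-j}{j_0}$ in \eqref{ODE system for unstable part}).

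The paper's resolution is exactly the step your plan omits: split $\epu$ into a ``real unstable'' block $\{c_j\}_{0\le j\le j_0}$, handled by the outgoing-flux/Brouwer argument, and a ``fake unstable'' block $\{c_j\}_{j_0<j\le K}$ whose modulation ODEs carry strictly negative rates and which are shown to decay (Lemma \ref{lemma: estimate of unstable fake}); this is what allows the number of vanishing conditions to exceed the number of unstable modes without inflating the codimension. Without this device, or a sharp weight that is not obtainable from the qualitative information on $Q$, the bootstrap as you describe it does not close: either the weighted coercivity fails (too few vanishing conditions) or the topological argument is run over roughly $A/4$ directions on most of which you have no damping. Two smaller points: the paper keeps the scaling law $\lambda_\tau/\lambda=-\tfrac12$ exact, so no modulation of $T$ or of the scale is used at all, and the unstable directions are low-order Taylor modes at the origin rather than symmetry modes (there are $j_0+1$ of them, with $j_0$ arbitrarily large as $\mu\to\tfrac13$), so ``at least the one generated by time-translation/dilation'' substantially undercounts. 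Your profile construction, by contrast, matches the paper's, though the ``suitable barrier'' must be fleshed out into the phase-plane argument that rules out exit of the trajectory through $f_Q=\tfrac13 Q$ and through $Q=0$ before reaching the origin.
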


\vspace{0.15cm}

\noindent \textit{Comments on Theorem \ref{thm: existence of blowup}.}
\label{ref:com}

\vspace{0.08cm}

\noindent \textit{$1.$ Optimality in blowup results for Keller-Segel equation with logistic damping.}

\vspace{0.1cm}

Theorem \ref{thm: existence of blowup} shows the existence of a finite-time blowup solution to $3$D Keller-Segel equation with logistic damping \eqref{equation: KS with damping} for any fixed $0 \le \mu < \frac{1}{3}$. This result provides the optimal blowup threshold for \eqref{equation: KS with damping} and completes the long-standing conjecture proposed in \cite{Fuest_blowup_optimal_2021,Tello_Winkler_globalexi_for_p=2} as mentioned in Subsection \ref{subsub: logistic damping}.

Moreover, our method is robust enough to extend naturally to higher dimensions $d \ge 3$, resulting in an analogous sharp blowup regime. Specifically, for $\mu \in \Big[ 0, \frac{d-2}{d} \Big)$ with $d \ge 3$, there always exists a finite-time blowup solution to \eqref{equation: KS with damping}.

When the damping term is weakened to $-\mu \rho^p$ with $\mu>0$ and $1 \le p <2$, the problem becomes simpler, as this term can also be viewed primarily as a perturbation of the dominant aggregation effect. Concretely, one could use $\frac{1}{T-t}Q\big(\frac{x}{(T-t)^\beta}\big)$ with $Q$ constructed in Theorem \ref{thm: existence of blowup} with $\mu=0$ as an approximate solution. Alternatively, one may use the self-similar solution \eqref{profile} of the classical Keller-Segel equation \eqref{equation, Keller-Segel} as an approximate solution. Then, we could apply the stability theory developed in this work or \cite{stable3DKS,lizhou2025nonradial} to establish the existence of finite-time blowup for any $\mu > 0$ and $p \in [1,2)$ in three dimensions. Since this problem is easier to handle, we do not pursue this in detail and only study \eqref{equation: KS with damping} with essential difficulty.

\vspace{0.2cm}

\noindent \textit{$2.$ New blowup mechanism for classical Keller-Segel equation in $3$D.}

\vspace{0.1cm}

For $3$D classical Keller-Segel equation \eqref{equation, Keller-Segel}, by applying Theorem \ref{thm: existence of blowup} with $\mu=0$, we have constructed a countably infinite family of blowup solutions, whose asymptotic behavior is given by
\[
\rho(t,x) \approx \frac{1}{T-t} Q_{\beta_j} \left( \frac{x}{(T-t)^{\beta_j}} \right), \qquad \forall \; t \in (0,T),
\]
where $\beta_j = \frac{1}{3} + \frac{1}{2j}< \frac{1}{2}$ for all $ j \ge 4$ and each $Q_{\beta_j}$ solves \eqref{PDE Q} with $\beta =\beta_j$. This blowup is an "abnormal" Type I blowup, in the sense that it does not match natural scaling \eqref{scaling invariance} \footnote{
We take the ansatz $\rho(t,x) = \frac{1}{(T-t)^a} Q \left( \frac{x}{(T-t)^b} \right)$ and plug it into \eqref{equation: KS with damping}. Requiring that all terms in the resulting equation have the same strength, the exponents must satisfy $a=2b=1$. This choice precisely coincides with the scaling symmetry \eqref{scaling invariance} of the equation. We refer to blowup solutions satisfying this form as exhibiting natural self-similar blowup, in analogy with \eqref{equation: KS with damping}.

}. Consequently, as a by-product of Theorem \ref{thm: existence of blowup}, we establish a new family of blowup mechanisms for $3$D classical Keller-Segel equation \eqref{equation, Keller-Segel}.

\vspace{0.2cm}

\noindent \textit{$3.$ Extension of Theorem \ref{thm: existence of blowup}.}

\vspace{0.1cm}
Theorem \ref{thm: existence of blowup} establishes the precise blowup mechanism for the density but does not address its stability. Nevertheless, by following the standard approach outlined in \cite{MR3986939, lizhou2025nonradial}, one can construct a finite-codimensional Lipschitz stable manifold of radial initial data such that the corresponding solutions to \eqref{equation: KS with damping} exhibit blowup dynamics similar to in \eqref{blowup KS with damping}.

Besides, a natural extension to the nonradial blowup can be expected by using the spectral method from \cite{nonradialblowupNS, nonradialblowupNLS, chen2024vorticityhigherdimension, chen2024vorticity}. Alternatively, following the argument in \cite{chen2024stability}, one might introduce a matrix system of modulation ODEs to realize it. 
Furthermore, noting that the blowup solution constructed in Theorem \ref{thm: existence of blowup} is highly localized, one can expect a finite-time blowup solution in a bounded domain by adapting the cut-off technique from \cite{nonradialblowupNS, nonradialblowupNLS}. However, as the main focus of this work is on the existence of finite-time blowup solutions to \eqref{equation: KS with damping}, we do not pursue these generalizations in detail here.

In addition, our method is expected to be sufficiently robust to potentially shed light on other models, such as the nonlinear heat equations. As a further motivation for the linear and nonlinear analysis for the Keller-Segel equation \eqref{equation: KS with damping} in Section \ref{sec: linear theory} and Section \ref{sec: nonlinear sta}, we briefly outline this in Section \ref{sec:slh} for $1$D semilinear heat equation.

\vspace{0.23cm}

\subsection{Proof strategy and related key highlights.}

\vspace{-0.1cm}

\subsubsection{Key idea of the construction: diffusion term as a perturbation.}

\mbox{}

\vspace{0.08cm}

Motivated by the explicit self-similar blowup solution \eqref{profile} to the classical Keller-Segel equation \eqref{equation, Keller-Segel}, a direct idea is to seek an analogous self-similar solution matching the natural scaling for the damped system \eqref{equation: KS with damping}. Specifically, since \eqref{equation: KS with damping} satisfies the scaling invariance \eqref{scaling invariance}, one is led to consider special solutions of form
\vspace{-0.05cm}
\[
\rho_\mu(t,x) = \frac{1}{T-t} Q_{\mu} \left( \frac{x}{\sqrt{T-t}} \right),
\]
where $Q_\mu$ solves
\be
Q_\mu + \frac{1}{2} y \cdot \na Q_\mu = \Delta Q_\mu + \na \cdot (Q_\mu \na \Dein Q_\mu) - \mu Q_\mu^2.
\label{self-simiar profile: equation}
\ee
However, the damping term $-\mu \rho^2$ significantly complicates the problem by breaking the divergence structure of \eqref{equation, Keller-Segel}. In particular, the partial mass 
\be
m_\rho (r) := \int_{B(0,r)} \rho(x) dx,
\label{partial mass: def}
\ee
is invalid to simplify the equation \eqref{self-simiar profile: equation} into a second order local differential equation, resulting \eqref{self-simiar profile: equation} essentially a third-order differential equation for general $\mu \in \left( 0, \frac{1}{3} \right)$, which, to the best of authors' knowledge, is quite difficult to analyze and is still open.

Instead, we adopt an alternative approach inspired by \cite{chen2024stability,hou20242,MR4445442,MR4445443,nguyen2023construction,MR4752990}, 
where the first step is to neglect the diffusion term
and construct a self-similar blowup solution for the following $3$D aggregation equation
\be
    \pa_t \rho = \nabla \rho \cdot \nabla\Delta^{-1}\rho + (1-\mu)\rho^2,
    \label{aggregation eq}
\ee
in the form
\be
    \rho(t,x) = \frac{1}{T-t} Q\left( \frac{x}{(T-t)^\beta} \right), \;  \text{ with } \beta < \frac{1}{2},
    \label{abnormal blowup}
\ee
where the profile $Q$ satisfies \eqref{PDE Q}. We then choose \eqref{abnormal blowup} as an approximate solution to the original system \eqref{equation: KS with damping}, which breaks the natural scaling of \eqref{equation: KS with damping}, thus making the diffusion term
$\Delta \rho$ subcritical and allowing it to be treated as a perturbation relative to the dominant aggregation term. 

Compared to \eqref{self-simiar profile: equation}, the equation \eqref{PDE Q} is effectively second-order and much more tractable. 
By restricting to radial solutions and introducing the averaged mass of $Q$ in the ball $B(0,r)$ 
\be
    f_Q(r) := \frac{1}{r^3}\int_0^r Q(s)s^2 ds, 
    \label{f: definition}
\ee  
\eqref{PDE Q} can be reformulated as an ODE system in terms of $(Q,f_Q)$ (see \eqref{ODE system}).
The formation of solutions to \eqref{ODE system} is through the dynamical system and a standard phase portrait analysis (See Figure \ref{fig3} and Lemma \ref{lem: solve ODE}). These methods have been instrumental in recent developments concerning the construction of smooth profiles for implosion, gravitational collapse, and collapsing-expanding shocks \cite{Javierblowup3DNS,ghj21,guoMahirJangpolytropicgap,jls2025,jang2023selfsimilarconvergingshockwaves,MR4445443}. The regularity of self-similar solutions serves as a fundamental criterion for solution admissibility and plays a crucial role in ensuring the existence of a nontrivial solution in the context of the present problem.

\subsubsection{Compatibility of $\mu < \frac{1}{3}$: insights from the phase portrait analysis.}

\mbox{}

\vspace{0.1cm}

One of the main goals of our paper is to figure out the existence of the smooth nontrivial radial solution to \eqref{PDE Q} for some $\beta < \frac{1}{2}$, which can be further simplified into finding a smooth curve connecting \be
P_0 := \left( \frac{1}{1-\mu}, \frac{1}{3(1-\mu)} \right), \footnote{
With the singular term $\frac1r$ appearing on the right-hand side of the ODE system \eqref{ODE system},
the choice of initial data $(Q(0),f_Q(0))=P_0$ should be required to ensure that $Q$ is smooth near $r =0$.
}
\label{P_0: def}
\ee
and the origin $O$ under the ODE system for $(Q,f_Q)$ given by \eqref{ODE system}. We classify the problem into three cases based on the relative position between $f_Q = \beta$ and $P_0$, and analyze the corresponding phase portraits:

\vspace{0.1cm}

\noindent \textit{\textbf{Case I.} $P_0$ lies strictly above the line $\{f_Q=\beta\}$: $\beta < \frac{1}{3(1-\mu)}$ (See Figure \ref{fig1}).}
\vspace{-0.2cm}
\begin{figure}[h]
\centering
\captionsetup{width=.9\linewidth}
\includegraphics[width=0.69\textwidth]{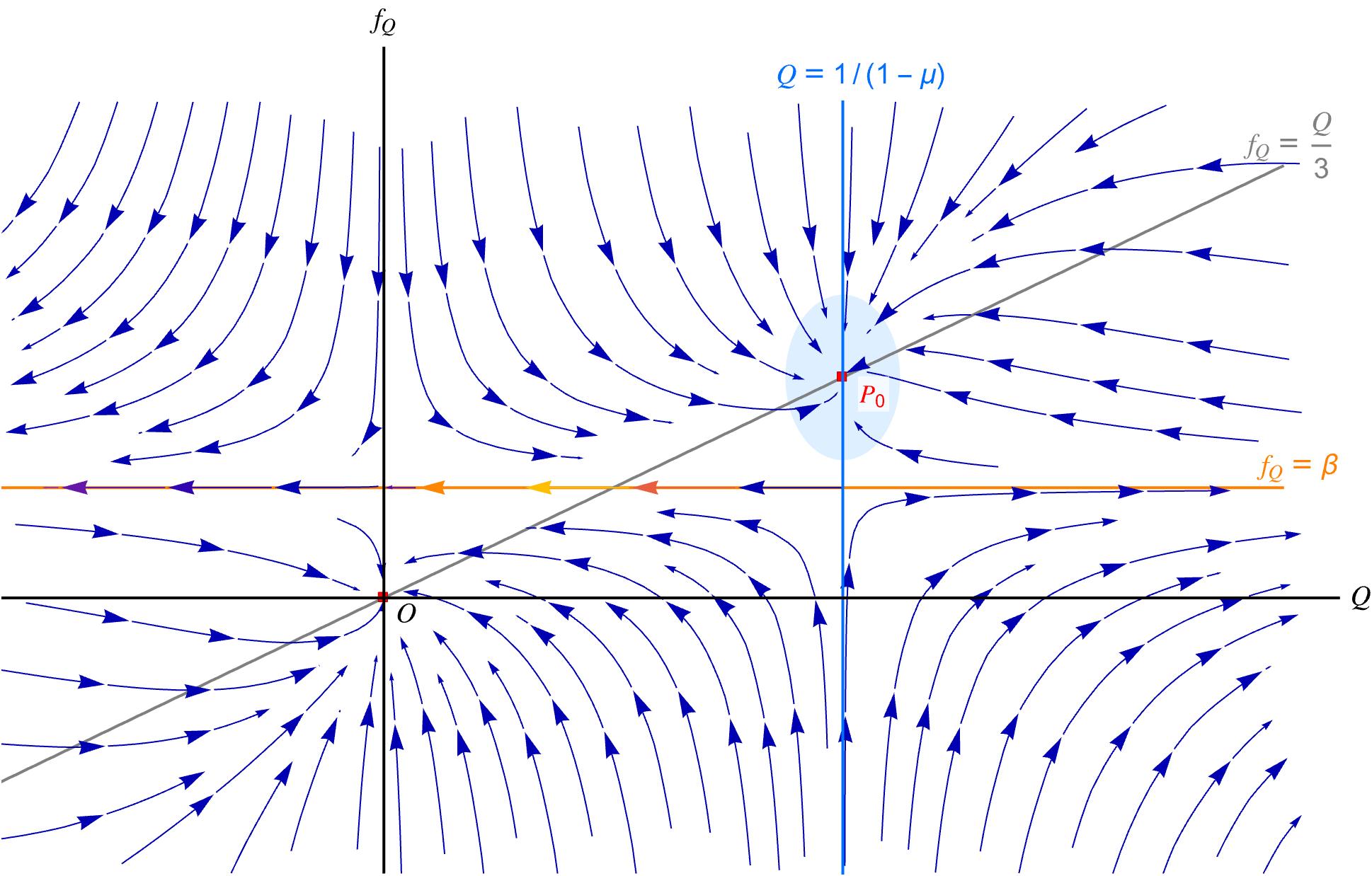}
\caption{$Qf_Q$-plane with $P_0$ above the line $f_Q=\beta$. In this case, there is only a trivial solution solving \eqref{ODE system} with $(Q(0),f_Q(0))=P_0$.
}
\label{fig1}
\vspace{-2mm}
\end{figure}

\vspace{0.05cm}

We remark here that in \textit{Case I} $\big(\beta < \frac{1}{3(1-\mu)} \big)$, motivated by the related phase portrait (see Figure \ref{fig1}), the flow field around $P_0$ is directed toward $P_0$. Consequently, $P_0$ is a stagnant point in this case and $Q(r) \equiv \frac{1}{1-\mu}$ is the only smooth solution to \eqref{ODE system} with $(Q(0),f_Q(0))=P_0$ that can be obtained. 
Therefore, to obtain a nontrivial solution to \eqref{ODE system}
with $(Q(0),f_Q(0))=P_0$, it is necessary  
for $\beta$ to satisfy
\be
\frac{1}{3(1-\mu)} \le \beta < \frac{1}{2}.
\label{beta: range of choice}
\ee
This requires that $\mu< \frac{1}{3}$, which aligns exactly with the \textit{optimal range of $\mu$} provided in Subsection \ref{subsub: logistic damping} and Theorem \ref{thm: existence of blowup}.

\vspace{0.1cm}

\noindent \textit{\textbf{Case II.} $P_0$ lies on the line $\{f_Q=\beta\}$: $\beta = \frac{1}{3(1-\mu)}$ (See Figure \ref{fig2}).}

\begin{figure}[h]
\centering
\captionsetup{width=.9\linewidth}
\includegraphics[width=0.69\textwidth]{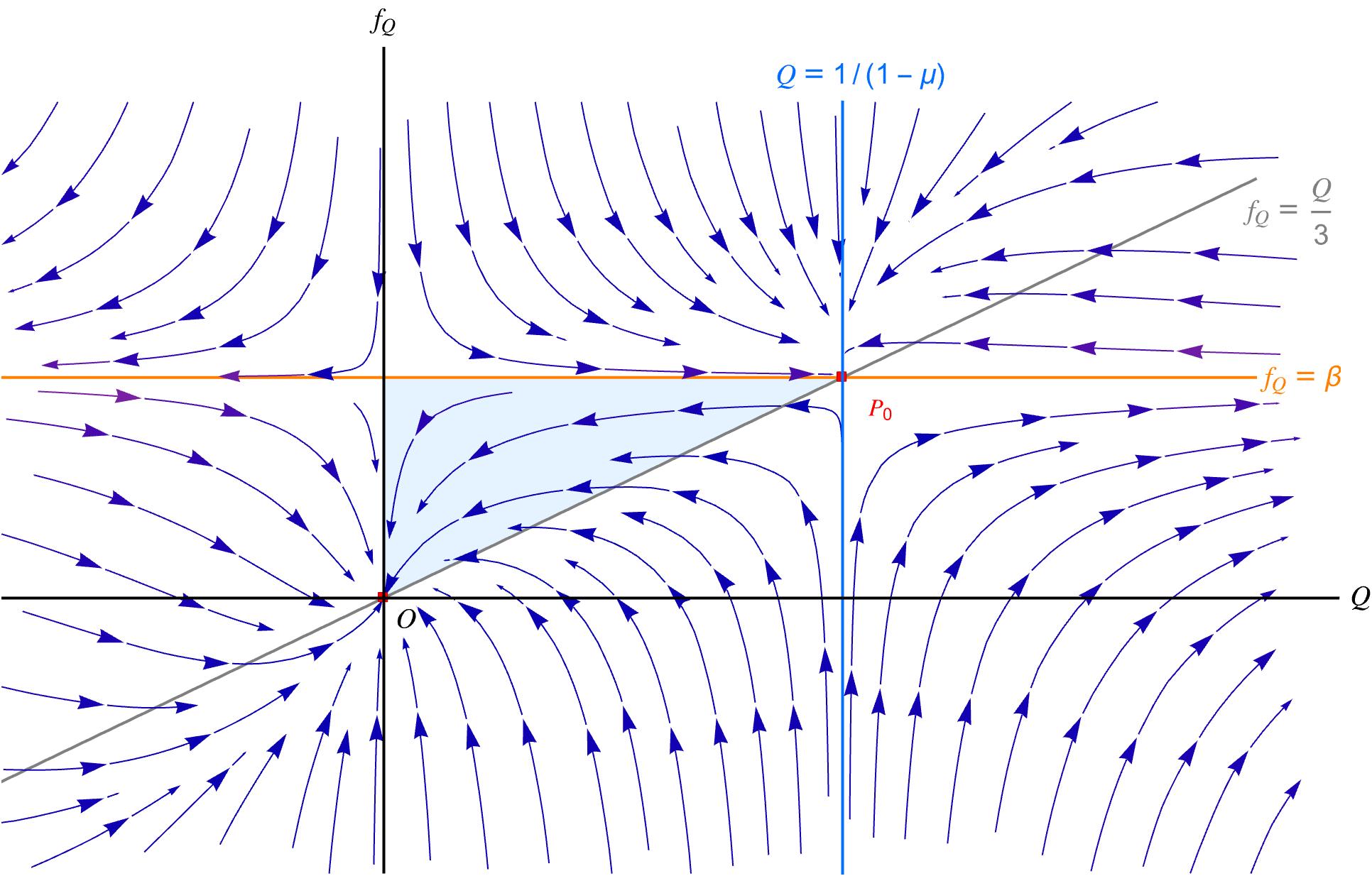}
\caption{$Qf_Q$-plane with $P_0$ on the line $f_Q=\beta$. In this case, though it seems possible to have a solution curve connecting between $P_0$ and the origin $O$ from the phase portrait, with the overwhelming singularity on the right-hand side of \eqref{ODE system}, after some careful analysis, the only possible smooth solution starting from $P_0$ to should be the trivial one: $(Q(r),f_Q(r)) \equiv P_0$.}
\label{fig2}
\end{figure}

\vspace{0.1cm}

\noindent \textit{\textbf{Case III.} $P_0$ lies strictly below the line $\{f_Q=\beta\}$: $\beta > \frac{1}{3(1-\mu)}$ (See Figure \ref{fig3}).}

\begin{figure}[h]
\centering
\captionsetup{width=.9\linewidth}
\includegraphics[width=0.69\textwidth]{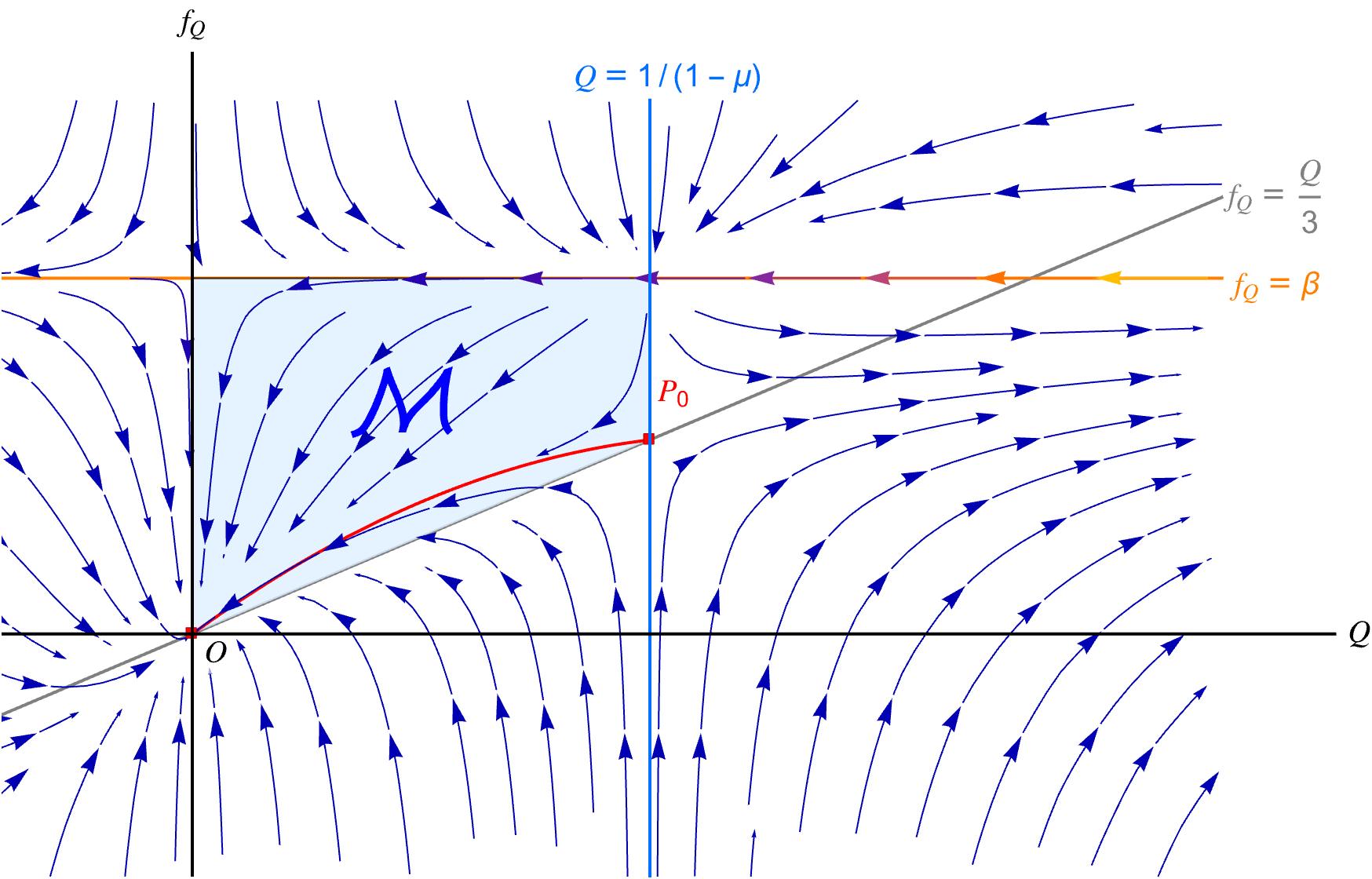}
\caption{$Qf_Q$-plane with $P_0$ below $f_Q = \beta$. In this case, we can always choose countably many $\{ \beta_j \}_j$  (see Theorem \ref{thm: existence of blowup} and Lemma \ref{lem: solve ODE} for more details) such that there exists a smooth solution curve (the red curve in the figure) lying in $\calM$ and connecting $P_0$ and the origin $O$ simultaneously, we will discuss this case in more details later in Lemma \ref{lem: solve ODE}.
}
\label{fig3}
\end{figure}

\subsubsection{Stability analysis beyond the spectral theory.}

\mbox{}

\vspace{0.1cm}

After selecting the self-similar blowup solution \eqref{abnormal blowup} of the aggregation equation \eqref{aggregation eq} as an approximate solution to \eqref{equation: KS with damping}, we then proceed to study the perturbation dynamics near the profile $Q$ under the self-similar coordinate \eqref{ss coordinte}, which constitutes the key to our stability analysis. 

In the literature, one of the natural approaches to address this problem is to analyze the spectral properties of the associated linearized operator. Such analysis typically relies on compactness arguments \cite{Javierblowup3DNS, nonradialblowupNLS, engel2000one, jiaSverakillposedness, ksnsblowup,  MerleblowupNLSdefocusing, MR4445443} or other more intricate operator theory, e.g. studying the precise or approximated spectrum of the linearized operator \cite{Collot_Ghoul_Masmoudi_Nguyen_3Dblowup_Collasping-ring_blowup23, hou2025axisymmetric, lizhou2025nonradial, MR1427848, nguyen2023construction}.  For problems with an explicit profile available, studying the spectrum of the linearized operator around the profile yields useful information for stability, e.g. \cite{Collot_Ghoul_Masmoudi_Nguyen_3Dblowup_Collasping-ring_blowup23,lizhou2025nonradial, MR1427848}.
However, for more complicated dynamics with only implicit or numerically approximate profiles, as is our case, obtaining the spectral information becomes significantly more challenging.

Alternatively, we adopt a robust approach based on energy estimates with a singular weight appropriately chosen. Linear stability can be extracted via enforcing local vanishing conditions of the perturbation at the origin, using only limited information of the profile. The idea was first demonstrated via the seminal works of \cite{Chen_Hou_Euler_blowup_theory_2023,Chen_Hou_Huang_De_Gregorio_eq_2021} in the self-similar case, and later on generalized by the second author with collaborators to type I singularities beyond self-similarity \cite{chen2024stability,hou20242} with the blowup law automatically inferred. Such a stability argument via singular weights will be further amenable to computer-assisted proofs. In this paper, we further generalize the idea to the finite-codimensional stability case by studying this open problem.

\subsection{Structure of the paper.}

\mbox{}

\vspace{0.1cm}
In Section \ref{sec:slh}, we briefly discuss the approach for handling the finite-codimensional stability of blowup solutions to the $1$D semilinear heat equation by introducing the singular weights. Motivated by this illustrative example, 
we then focus on the Keller-Segel equation with logistic damping term \eqref{equation: KS with damping}. In Section \ref{sec: ODE}, given any fixed $\mu \in \left[ 0, \frac{1}{3} \right)$, we establish the existence of nontrivial solutions to equation \eqref{PDE Q} with an appropriate choice of $\beta \in \left( \frac{1}{3(1-\mu)},\frac{1}{2} \right)$, the quantitative properties of the approximate profiles will be investigated as well. 
For the stability analysis, we first examine the coercivity of the linearized operator in the $L_w^2$ sense with $w$ being sufficiently singular at the origin, as detailed in Section \ref{sec: linear theory}. Finally, in Section \ref{sec: nonlinear sta}, we will study the nonlinear stability to complete the proof of Theorem \ref{thm: existence of blowup}.

\subsection{Notations.} 
We denote 
\[
\la r \ra = \sqrt{1+r^2}.
\]
And we define a radial smooth cut-off function $\chi$ on $B(0,1)$ by
\be
\chi(x) 
= \begin{cases}
    1, & \text{if } |y| \le 1, \\
    0, & \text{if } |y| \ge 2.
\end{cases}
\label{def: cut-off function}
\ee
Based on this, we define the smooth cut-off function on $B(0,R)$ by
\be
\chi_R(x) : = \chi \left( \frac{x}{R} \right).
\label{def: cut-off on B(0,R)}
\ee

For any given weighted function $w:\RR^3 \to \RR$, we define $L_w^2(\RR^3)$ the weighted inner product by
\[
(g_1,g_2)_{L_w^2(\RR^3)} = \int_{\RR^3} g_1(x) g_2(x) w(x) dx,
\]
and the weighted $L^2$ space $L_w^2(\RR^3)$ by the collections of all functions of $g$ satisfying $\| g \|_{L_w^2}^2:= (g,g)_{L_w^2} <\infty$.

Furthermore, we define $C_0^\infty(\Omega)$ as the collection of all $C^\infty(\RR^3)$ functions with compact support in $\Omega \subset \RR^3$.

For any fixed $m \in \ZZ_{\ge 0}$, we define $\dot H^{2m}$ the collection of all functions satisfying
\[
\| g \|_{\dot H^{2m}}^2 = \int_{\RR^3} |\Delta^m g|^2 dx < \infty,
\]
and define $\dot H^{2m+1}$ the collection of all functions satisfying
\[
\| g \|_{\dot H^{2m+1}}^2 = \int_{\RR^3} |\na \Delta^m g|^2 dx < \infty.
\]
In addition, we denote $H^k(\RR^3)$ for the collection of all functions with finite $\dot H^m$ norm for all $0\le m\le k$. And $H_{rad}^{k}$ is the collection of all radial functions lying in $H^{k}$. Moreover, wewrite $H^\infty(\RR^3) := \cap_{k=0}^\infty H^k(\RR^3)$.

If $h(x)=h \left( |x| \right)$ is a smooth radial function on $\RR^3$, then $h(r)$ can be approximated by the Taylor expansion at the origin: 
\be
h(r) = \sum_{j=0}^M [h]_j r^{2j} + O(r^{2M+2}), \; \text{ with } \; [h]_j := \frac{\partial_r^{(2j)} h}{(2j)!} \Bigg|_{r=0},
\label{Taylor coefficient: def}
\ee
where $[h]_j$ is the $2j$-th order coefficient of Taylor expansion of $h(r)$ at the origin.

\mbox{}

\begin{acknowledgement}
 Jiaqi Liu was supported by NSF grant DMS-2306910, and Yixuan Wang was supported by NSF Grant DMS-2205590. The authors gratefully acknowledge Yao Yao for valuable discussions and insightful suggestions throughout this research. We would like to express our great gratitude to Zhongtian Hu for the informative discussions. 
 Our thanks also extend to Thomas Y. Hou, Juhi Jang, Van Tien Nguyen, Xiang Qin, Jia Shi, and Peicong Song for their valuable suggestions to our work. Finally, we thank the IMS workshop on "Singularities in Fluids and General Relativity" for providing the opportunity for collaboration.

\end{acknowledgement}

\section{Motivating example of 1D semilinear heat equation}\label{sec:slh}

In this section, to illustrate the ideas of proof more clearly, we first consider the simpler $1$D semilinear heat equation as a motivating example.
Precisely, we will briefly sketch the high-level idea to study high-order vanishing type-I blowup for the $1$D semilinear heat equation under \textit{radial (even symmetric)} setting:
\be
u_t=u_{xx} + u^2.
\label{PDE: nonlinear heat}
\tag{HEAT}
\ee

\subsection{Self-similar renormalization and the approximate solution} For any fixed $m \in \ZZ_{> 1}$, we introduce the self-similar coordinate
\be
y = \frac{x}{\lambda^{\frac{1}{m}}}, \quad \frac{d\tau}{dt} = \frac{1}{\lambda^2}, \quad \tau\Big|_{t=0} =0, \quad 
\frac{\lambda_\tau}{\lambda} = -\frac{1}{2},
\label{ss coordinate: heat eq}
\ee
and corresponding renormalization
\be
u(t,x) = \frac{1}{\lambda^2} U \left( \tau, y \right),
\label{ss renormalization: heat eq}
\ee
then $\lambda(\tau) = \lambda_0 e^{-\frac{1}{2} \tau}$ and $U$ solves the equation
\be
\partial_\tau U = \lambda^{2-\frac{2}{m}} U_{yy} - U - \frac{1}{2m} y \cdot \na U + U^2,
\label{eq: heat renormalized setting}
\ee
where the diffusion term can be regarded as a perturbation since $2- \frac{2}{m} >0$. This, in turn, motivates us to find an approximate solution to the equation
\be
-U - \frac{1}{2m} y \cdot \na U + U^2=0.
\label{sec: PDE U*}
\ee
In particular, this equation can be explicitly solved by
\be
U_* (y)= (1+ c y^{2m})^{-1}.
\label{sec3: profile}
\ee
Here $c>0$ is a constant, and $m>1$ describes the vanishing order of the next order expansion of $U_*$ near the origin.

\subsection{Linear stability}
We fix $m>1$ and $c=1$ in \eqref{sec3: profile}, and plug the ansatz  $U = U_* + \ep$ into \eqref{eq: heat renormalized setting}, it then follows that $\ep$ solves
\[
\partial_\tau \ep = \lambda^{2-\frac{2}{m}} U_{yy} + \calL \ep + \ep^2,
\]
where the linearized operator reads
\be
\mathcal{L}\varepsilon=-\varepsilon-\frac{1}{2m}y \partial_y \varepsilon +2U_*\varepsilon.
\label{sec: L}
\ee
Next, we introduce a weighted $L_\Theta^2$ space with singular weight $\Theta(y) = y^{-4m-4}$ near the origin to extract damping, which is the essential step to close the nonlinear stability. Via the integration by parts, we have the coercivity near the origin
\be
\left( \calL \ep, \ep \right)_{L_\Theta^2}
= \left( \left( -1 + 2U_* + \frac{(\Theta y)_y}{4 m \Theta} \right) \ep, \ep \right)_{L_{\Theta}^2} 
\approx  - \frac{3}{4m} \left(\ep, \ep \right)_{L_{\Theta}^2}.
\label{sec3: cor L 1}
\ee
In particular, with careful analysis, there is a small constant $0< \kappa = \kappa(m,U_*) \ll 1$ such that \eqref{sec3: cor L 1} can be extended to
\be
\left( \calL \ep, \ep \right)_{L_{\Theta+\kappa}^2}
 \le - \frac{1}{4m} \left(\ep, \ep \right)_{L_{\Theta+\kappa}^2}.
\label{sec3: cor L 2}
\ee

Additionally, we need to introduce the higher Sobolev norm $\dot H^{\bar K}$ to close the bootstrap argument. Precisely,
\begin{align}
\left( \calL \ep, \ep \right)_{\dot H^{\bar K}}
& =\left( \left(-1-\frac{\bar K}{2m}+2U_*+\frac{1}{4m} \right)\partial_y^{\bar K}\varepsilon, \partial_y^{\bar K}\varepsilon \right)_{L^2} + O(\| \ep \|_{H^{\bar K-1}} \| \ep \|_{\dot H^{\bar K}}) \notag \\
& \le - \frac{2\bar K-4m-1}{4m} \| \ep \|_{\dot H^{\bar K}}^2 + O(\| \ep \|_{H^{{\bar K}-1}} \| \ep \|_{\dot H^{\bar K}}),
\label{sec3: cor L 3}
\end{align}
where the leading order enjoys damping once we choose $\bar K = \bar K(m) \gg 1$.

\subsection{Modulation ODEs and nonlinear stability} With the singular weight $\Theta(y) = |y|^{-4m-4}$ given previously, we further \textit{radially} decompose $\ep$ into
\be
\ep (\tau,y)= \epu(\tau,y) + \eps(\tau,y), \quad \text{ with }  \quad \epu = \sum_{j=0}^{m} c_j(\tau) \chi(y) y^{2j},
\ee
such that $\eps(\tau,y) = O(y^{2m+2})$ near the origin, which yields an ODE system for modulation parameters $\{ c_j \}_{j=0}^m$:
\be
\begin{cases}
    \dot c_j = \left( 1- \frac{j}{m} \right) c_j + [\epu^2]_j + \lambda^{2- \frac{2}{m}} [U_{yy}]_j, & 0 \le j < m-1,
    \vspace{0.1cm} \\
    \dot c_{m} = [\epu^2]_m - 2c_0 + \lambda^{2- \frac{2}{m}}[U_{yy}]_m, & j=m.
\end{cases}
\ee
Additionally, $\eps = O(y^{2m+2})$ solves the equation
\[
\partial_\tau \eps =   \calL \eps  + 2 \epu \eps + \eps^2 + G[\lambda,U,\epu],
\]
with the modulation term $G[\lambda,U,\epu]= O(y^{2m+2})$ given by 
\[
G[\lambda,U,\epu]= \left( \lambda^{2-\frac{2}{m}} U_{yy} + \calL \epu + \epu^2 \right) - \sum_{j=0}^K \left[\lambda^{2-\frac{2}{m}} U_{yy} + \calL \epu + \epu^2 \right]_j \chi  y^{2j}.
\]
Finally, we can use the standard topological argument together with \eqref{sec3: cor L 1} and \eqref{sec3: cor L 3} to derive the nonlinear stability with finite codimension $m+1$. 

\begin{remark}
    We expect that this nonlinear stability result can be improved to finite-codimension $m-1$, which is two dimensions lower than our previous findings. 
    The key underlying reason is the presence of two degrees of freedom, namely, the choice of the blowup time $T>0$ and the shrinking rate. These degrees of freedom can be utilized through a matching argument to recover the corresponding unstable directions as in \cite{Limodestability, lizhou2025nonradial}.

    Alternatively, one may employ a method of dynamical rescaling to establish stability with finite codimension $m-1$. 
    Specifically, we modify the coordinate \eqref{ss coordinate: heat eq} as
    \[
    y = \frac{x}{\mu^\frac{1}{m}}, \quad  \frac{d\tau}{dt} = \frac{1}{\lambda^2}, \quad \tau\big|_{t=0}=0, \quad \frac{\lambda_\tau}{\lambda} = -\frac{1}{2} + \frac{1}{2} c_a, \quad \frac{\mu_\tau}{ \mu} = - \frac{1}{2}  - mc_s.
    \]
    We then define the corresponding renormalization
    \[
    u(t,x) = \frac{1}{\lambda^2} U (\tau,y).
    \]
    Under this coordinate transformation, $U$ satisfies
    \[
    \partial_\tau U = \lambda^2 \mu^{-\frac{2}{m}} U_{yy} + \left( -1 + c_a \right) U -\left( \frac{1}{2m} + c_s \right) y \partial_y U + U^2,
    \]
    where the parameters $(c_a,c_s)$ are determined by the modulation conditions
    \[
    U(\tau, 0) = U_*(0) \qquad \text{ and } \qquad  [U(\tau,0)]_{m}= [U_*]_{m},
    \] 
    and we eliminate neutral modes by fixing $c_0=c_m=0$.
    By applying a similar argument of modulation ODEs, we obtain the nonlinear stability with finite codimension $m-1$. Notably, introducing extra scaling parameters to perturb the scaling symmetry is crucial for extending the argument to the nonradial setting; see previous works by the second author and collaborators \cite{chen2024stability,hou20242}.
\end{remark}

\begin{remark}

Compared with the semilinear heat equation \eqref{PDE: nonlinear heat}, analyzing the Keller-Segel equation with logistic damping \eqref{equation: KS with damping} involves several additional challenges. For example, 
the profile $U_*$ 
introduced in \eqref{sec3: profile} is an explicit solution to 
the first-order and separable local equation \eqref{sec: PDE U*}. 
In contrast, for \eqref{equation: KS with damping} with $\mu\in \left( 0, \frac{1}{3} \right)$, the associated profile equation \eqref{PDE Q} is inherently nonlocal and cannot be trivially solved. 
This nonlocality requires a more delicate analysis, which will be carried out in Section \ref{sec: ODE}.

    Moreover, since there is no explicit nontrivial solution to \eqref{PDE Q} with \eqref{smooth profile: initial data -1}, additional effort is required to derive quantitative properties of the profile $Q$. Combined with the nonlocal nature of \eqref{equation: KS with damping}, these complexities make the establishment of linear coercivity of the Keller-Segel equation more intricate than in the case of the semilinear heat equation \eqref{PDE: nonlinear heat}.  Detailed strategies to handle these obstacles will be presented in Section~\ref{sec: linear theory}.

\end{remark}

    \section{Existence of profile via phase-portrait method}
    \label{sec: ODE}
This section is devoted to the existence of smooth profile solving \eqref{PDE Q} when $\mu< \frac{1}{3}$ with the appropriate choices of $\beta$. Firstly, note that for any radially symmetric function $R(y)$ satisfying $R(y) \to 0$ as $|y| \to \infty$,
\be
    \nabla\Delta^{-1}R (y) = \frac{y}{|y|^3} \int_0^{|y|} R(s)s^2 ds,
    \label{nonlocal term: formula}
\ee
Then, under the radial symmetric assumption, \eqref{PDE Q} becomes 
\be\label{ode Q}
    \left(\frac{1}{r^2}\int_0^r Q(s)s^2 ds-\beta r \right) Q'(r) -Q + (1-\mu)Q^2=0.
\ee
If $f_Q \not = \beta$, by introducing $f_Q$ (cf. \eqref{f: definition}), then we obtain an ODE system of $(Q(r),f(r))$ for $r >0$:
\be
\begin{cases}
   Q'(r) = \frac{1}{\beta -f_Q} \frac{(1-\mu) Q^2 - Q}{r},\vspace{0.2cm} \\
   f_Q'(r) = \frac{Q-3f_Q}{r}.
\end{cases}
\label{ODE system}
\ee

\begin{remark}
    For notational simplicity, in the subsequent discussion of this section, we introduce the following notations: we write $f :=f_Q$, and $Q_i := [Q]_i$ and $f_i := [f]_i$ for any $i \in \ZZ_{\ge 0}$, where $f_Q$ is given in \eqref{ODE system}, $[Q]_i$ and $[f]_i$ are the $2i$-th order coefficients of Taylor expansion of $f$ and $Q$ at the origin respectively (cf. \eqref{Taylor coefficient: def}).
\end{remark}

Our main result of this section is as follows, which gives the existence of the nontrivial smooth solutions to \eqref{ODE system} and the asymptotic behavior of the related solutions when $r=0$ and $r \to \infty$.

\begin{lemma}
\label{lem: solve ODE}
    For any fix $\mu\in[0,\frac13)$, let $\beta:=\beta(j_0,\mu) = \frac{1}{3(1-\mu)}+\frac{1}{2j_0}$ with arbitrarily fixed $j_0\geq J>1$ where $J$ is defined in \eqref{J: def}. Then, for any $Q_{j_0}<0$, there exists a unique smooth solution $(Q(r),f(r))$ to \eqref{ODE system} on $r \ge 0$ with initial conditions:
    \begin{align}
    (Q(0),f(0)) = \left( \frac{1}{1-\mu}, \frac{1}{3(1-\mu)} \right) \quad \text{ and } \quad 
    \partial_r^{2j_0} Q(0) = (2j_0)! Q_{j_0}.
    \label{ODE: initial assumption}
    \end{align}
  
    Moreover, the solution  $(Q(r),f(r))$ satisfies
    \begin{enumerate}
    \item  $(Q(r),f(r))$ is analytic near the origin. Precisely, there exists $\epsilon>0$ such that $(Q(r),f(r))$ can be expressed as Taylor series
\be
	Q(r) =\sum_{j =0}^{\infty} Q_{j}r^{2 j} \quad \text{and}\quad f(r) = \sum_{j =0}^{\infty} f_{j}r^{2 j}, \quad \forall \; r \in [0,\epsilon],
    \label{(Q,f), expansion}
\ee
where $\{Q_j\}_{j\geq 0}$ and $\{ f_j \}_{j\geq 0}$ satisfy the following recurrence relations
\be
\quad f_j = \frac{1}{2j+3} Q_j, \quad  \forall \; j \ge 0,
\label{recurrence relation 1}
\ee
and
\be
Q_j
=
\begin{cases}
\frac{1}{1-\mu}, & j=0, \vspace{0.1cm} \\
0, & 0< j < j_0, \vspace{0.1cm} \\
\frac{ \sum_{i=1}^{j-1} \left( \frac{2i}{2(j-i)+3} + (1-\mu)  \right)Q_i Q_{j-i} }{2j \left( \beta -\frac{1}{2j} - \frac{1}{3(1-\mu)} \right)}, & j >j_0.
\end{cases}
\label{recurrence Qj}
\ee
\item Both $Q(r),f(r)>0$ are strictly positive for $r\in[0,\infty)$ and monotonically decrease from $Q_0=\frac{1}{1-\mu}$ \big(respectively, $f_0 = \frac{1}{3(1-\mu)}$\big) to 0 as $r \to +\infty$. 
\item For any $j \ge 0$, there exists a constant $C=C(\beta,Q,j)>0$ such that
\be\label{decay estimates of Q}
|\partial_r^{(j)}Q(r)| + |\partial_r^{(j)}f(r)|\le C \la r \ra ^{-2-j} \quad \text{for} \quad  r \in \overline{\RR^+}.
\ee
In particular, $Q \in H^\infty (\RR^3)$.
    \end{enumerate}
\end{lemma}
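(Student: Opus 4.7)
\emph{Local analytic solution via Taylor series.} The plan is to first build a local analytic solution near $r=0$ by inserting $Q(r)=\sum_{j\ge 0}Q_j r^{2j}$ and $f(r)=\sum_{j\ge 0}f_j r^{2j}$ into \eqref{ODE system}. The second equation, rewritten as $rf'=Q-3f$, matches term-by-term to give $f_j=Q_j/(2j+3)$, i.e.\ \eqref{recurrence relation 1}. Recasting the first as $(\beta-f)rQ'=(1-\mu)Q^2-Q$ and collecting the coefficient of $r^{2j}$ for $j\ge 1$ produces
\begin{equation*}
\Big[2j(\beta-f_0)-2(1-\mu)Q_0+1\Big]Q_j\;=\;\sum_{i=1}^{j-1}\Big[\tfrac{2i}{2(j-i)+3}+(1-\mu)\Big]Q_i Q_{j-i},
\end{equation*}
and, after substituting $(Q_0,f_0)=P_0$ and $\beta=\tfrac{1}{3(1-\mu)}+\tfrac{1}{2j_0}$, the bracketed coefficient simplifies to $(j-j_0)/j_0$. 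Thus for $1\le j<j_0$ the coefficient is nonzero while the convolution vanishes by induction, forcing $Q_j=0$; at $j=j_0$ both sides vanish identically, so $Q_{j_0}$ remains as the prescribed free parameter; for $j>j_0$ inversion produces \eqref{recurrence Qj}. Convergence on some $[0,\epsilon]$ follows from a standard majorant argument: the Cauchy-type numerator in \eqref{recurrence Qj} carries $O(1)$ weights while the denominator grows linearly in $j$, yielding $|Q_j|\le C\rho^{2j}$ by induction.

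\emph{Global extension, positivity, and monotonicity.} Away from $r=0$ the system \eqref{ODE system} is locally Lipschitz as long as $f\neq\beta$, so the analytic germ continues uniquely in that regime. I would introduce the trapping region
\begin{equation*}
\mathcal{M}:=\Big\{(Q,f):0<Q<\tfrac{1}{1-\mu},\;0<f<\beta,\;Q<3f\Big\},
\end{equation*}
and verify: (a) immediate entry into $\mathcal{M}$, using the leading Taylor corrections and the identity $Q(r)-3f(r)\sim Q_{j_0}\tfrac{2j_0}{2j_0+3}r^{2j_0}<0$ for small $r>0$; (b) inflow on $\partial\mathcal{M}$: on $Q=1/(1-\mu)$ and $Q=0$ one has $Q'=0$, on $f=0$ one has $f'=Q/r>0$, and on $Q=3f$ one has $f'=0$ while $Q'<0$ because $(1-\mu)Q<1$ throughout $\mathcal{M}$; (c) the singular line $\{f=\beta\}$ is never reached, since $f$ is monotonically decreasing from $f_0=1/(3(1-\mu))<\beta$ inside $\mathcal{M}$. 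Inside $\mathcal{M}$ both $Q'<0$ and $f'<0$, giving strict positivity and monotone decrease. Monotone boundedness yields limits $(Q_\infty,f_\infty)$, which must be equilibria of \eqref{ODE system} (otherwise $|Q'|$ or $|f'|$ would decay no faster than $c/r$, incompatible with boundedness); the monotonic strict decrease excludes $P_0$, forcing $(Q_\infty,f_\infty)=(0,0)$.

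\emph{Decay and $H^\infty(\RR^3)$ regularity.} For the quantitative decay, I would examine the logarithmic derivative $(\ln Q)'=\tfrac{(1-\mu)Q-1}{(\beta-f)r}\sim -\tfrac{1}{\beta r}$ as $r\to\infty$, yielding $Q(r)\sim Cr^{-1/\beta}$, and analogously for $f$. Since $\beta<\tfrac{1}{2}$ gives $1/\beta>2$, one obtains $|Q|+|f|\le C\langle r\rangle^{-2}$ for $r\ge 1$. Differentiating \eqref{ODE system} and inducting on $j$ --- each differentiation introduces one factor of $r^{-1}$ while lower derivatives are already controlled --- yields $|\partial_r^{(j)}Q|+|\partial_r^{(j)}f|\le C\langle r\rangle^{-2-j}$ for all $j\ge 0$. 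For $H^\infty(\RR^3)$ membership, smoothness at the origin follows since \eqref{(Q,f), expansion} contains only even powers of $r$ so $Q(|x|)$ is smooth in Cartesian coordinates, while decay at infinity makes every Cartesian derivative square-integrable against $r^2\,dr$.

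\emph{Main obstacle.} The delicate step is the trapping/phase-portrait argument: since $P_0$ lies precisely on both the nullcline $(1-\mu)Q=1$ and the balance curve $Q=3f$, the departure direction from $P_0$ is controlled entirely by the single resonant Taylor mode $Q_{j_0}$. Tracking how this one mode propagates through the nonlinear coupling so as to (i) guarantee immediate entry into $\mathcal{M}$ and (ii) rule out the trajectory ever touching the singular line $\{f=\beta\}$ is what ties the local resonance analysis, the global phase portrait, and the ultimate restriction $\mu<\tfrac{1}{3}$ into a single coherent argument.
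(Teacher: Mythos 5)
Your overall architecture (Taylor recurrence at the origin, trapping region in the $(Q,f)$-plane, limits at infinity, then decay by integrating the logarithmic derivative) is the same as the paper's, and several of your micro-steps are legitimate variants: entering $\mathcal{M}$ via the sign of $Q-3f\sim Q_{j_0}\tfrac{2j_0}{2j_0+3}r^{2j_0}$ replaces the paper's slope computation $\mathcal F'(Q_0)<\tfrac13$, and identifying the limit $(0,0)$ by ``bounded monotone limits must kill the $c/r$ right-hand side'' replaces the paper's $\ln\mathcal R(Q)\to\infty$ integral; both are fine. However, there are two genuine gaps. First, \emph{uniqueness}: the lemma asserts uniqueness of the \emph{smooth} solution with data \eqref{ODE: initial assumption}, and your only remark is that the analytic germ continues uniquely away from $r=0$. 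That does not address the singular point: the system \eqref{ODE system} has a $1/r$ singularity at the origin, so Cauchy--Lipschitz gives nothing there, and a smooth competitor agreeing with your analytic solution to infinite order at $r=0$ need not coincide with it at any $r>0$ without further argument. The paper closes this (Step~4) by a weighted Gronwall estimate for $\mathcal E=|\triangle Q|^2+|\triangle f|^2$, using $\mathcal E'\le (4j_0+1)\mathcal E/r+M\mathcal E$ together with the high-order vanishing of $\mathcal E$ at $r=0$; some argument of this type is needed and is missing from your proposal.

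Second, your convergence justification for the series is based on a false premise: the weights $\tfrac{2i}{2(j-i)+3}$ in the numerator of \eqref{recurrence Qj} are \emph{not} $O(1)$ --- for $i$ close to $j$ they are of size $\sim j$, and summing them over $i$ gives $\sim j\log j$. With a plain geometric ansatz $|Q_j|\le C\rho^{2j}$ the induction then produces $|Q_j|\lesssim C^2\rho^{2j}\log j$ after dividing by the linearly growing denominator, so the bound does not close with a fixed constant. One needs the extra polynomial decay in the inductive bound, as in the paper's $|Q_j|\le K^{j-\alpha}/j^2$ combined with the convolution inequalities $\sum_{i+j=L}\tfrac{1}{i^2j^2}\lesssim L^{-2}$ and $\sum_{i+j=L}\tfrac{1}{ij^3}\lesssim L^{-1}$. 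A smaller (fixable) soft spot of the same flavor: on the face $Q=0$ of $\partial\mathcal M$ the field is only tangent ($Q'=0$), so tangency alone does not exclude exit there; you need the observation that near such a point $|Q'|\le CQ$ (or, as in the paper, that $Q\equiv 0$ is itself a solution of the graph ODE, contradicting uniqueness), i.e.\ a Gronwall/uniqueness step rather than an inflow inequality.
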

\begin{proof}
\textit{Step 1. Solve \eqref{ODE system} via Taylor expansion.}
We assume that $(f(r),Q(r))$ can be expanded into the following forms:
\[
f(r) = \sum_{j=0}^\infty f_j r^{2j}, \quad Q(r) = \sum_{j=0}^\infty Q_j r^{2j},
\]
which yields that
\[
r f'(r)= \sum_{j=0}^\infty 2j f_j r^{2j}, \quad rQ'(r) = \sum_{j=0}^\infty 2j Q_j r^{2j}.
\]
Next, we substitute these expressions into the ODE system \eqref{ODE system} and match coefficients to establish \eqref{recurrence relation 1} and \eqref{recurrence Qj}. Precisely, we first expand the second equation in \eqref{ODE system} to get that
\[
\sum_{j=0}^\infty 2j f_j r^{2j} = \sum_{j=0}^\infty \left( Q_j -3 f_j \right) r^{2j}.
\]
which directly induces \eqref{recurrence relation 1}. 

Next, we expand $(\beta-f) r \partial_r Q = (1-\mu) Q^2- Q$, the first equation in \eqref{ODE system}. Using the identity
\[
\left( \sum_{j=0}^\infty h_j r^{2j} \right) \left( \sum_{j=0}^\infty k_j r^{2j} \right) = \sum_{j=0}^\infty \left( \sum_{i=0}^j k_i h_{j-i} \right) r^{2j},
\]
the left-hand side of the equation becomes
\begin{align*}
    \left( \beta -  \sum_{j=0}^\infty f_j r^{2j} \right) \left( \sum_{j=0}^\infty 2j Q_j r^{2j} \right) 
    & = \beta \sum_{j=0}^\infty 2j Q_j r^{2j} - \sum_{j=0}^\infty \left( \sum_{i=0}^j 2iQ_i f_{j-i} \right) r^{2j} \\ &= \sum_{j=0}^\infty \left(2 \beta j - \sum_{i=0}^j 2i Q_i f_{j-i} \right) r^{2j},
\end{align*}
and the right-hand side of the equation becomes
\begin{align*}
(1-\mu)\sum_{j=0}^\infty \left( \sum_{i=0}^j Q_i Q_{j-i} \right) r^{2j} - \sum_{j=0}^\infty Q_j r^{2j}  
 = \sum_{j=0}^\infty \left( (1-\mu) \sum_{i=0}^j Q_i Q_{j-i} -Q_j \right) r^{2j}.
\end{align*}
Comparing the coefficients on both sides, we obtain that
\[
2 \beta j Q_j - \sum_{i=0}^j 2i Q_i f_{j-i} = (1-\mu) \sum_{i=0}^j Q_i Q_{j-i} -Q_j, \quad \forall \;  j \ge 0.
\]
Replacing $f_{j-i}$ by using \eqref{recurrence relation 1}, we decouple the recurrence relations for $\{Q_j\}_{j=0}^\infty$ into
\[
2 \beta j Q_j - \sum_{i=0}^j \frac{2i}{2(j-i)+3} Q_i Q_{j-i} = (1-\mu) \sum_{i=0}^j Q_i Q_{j-i} -Q_j, \quad \forall \;  j \ge 0.
\]
In particular,
\[
(1-\mu) Q_0^2 -Q_0 =0, \quad \text{for } \; j=0,
\]
and $Q(0) = Q_0 = \frac{1}{1-\mu}$ exactly solves this equation. For $j\geq 1$, we isolate the highest index term $Q_j$ to obtain
\[
2j \Big(  \beta  - \frac{1}{2j} - \frac{1}{3(1-\mu)} \Big) Q_j = \sum_{i=0}^{j-1} \left( \frac{2i}{2(j-i)+3} + (1-\mu)  \right)Q_i Q_{j-i}.
\]
In particular, applying $j=1$ to the equation together with the fact that $\beta< \frac{1}{2}$ and $1+\frac{1}{3(1-\mu)}>1$, it induces
\[
2\left(  \beta  - \left(\frac{1}{2} + \frac{1}{3(1-\mu)} \right) \right) Q_1=0, \quad \Rightarrow \quad Q_1 =0.
\]
Iteratively, with the choice of $\beta = \frac{1}{3(1-\mu)}+ \frac{1}{2j_0}$,
\[
\beta- \frac{1}{2j} - \frac{1}{3(1-\mu)} = \frac{1}{2j_0} - \frac{1}{2j} \not =0, \quad \Rightarrow \quad Q_j=0, \quad \forall \; 2 \le j < j_0.
\]
Similarly, with $\partial_r^{2j_0} Q(0) =Q_{j_0}<0$ determined in \eqref{ODE: initial assumption},
\[
\beta- \frac{1}{2j} - \frac{1}{3(1-\mu)} \not =0, \; \Rightarrow \; Q_j =\frac{ \sum_{i=1}^{j-1} \left( \frac{2i}{2(j-i)+3} + (1-\mu)  \right)Q_i Q_{j-i} }{2j \left( \beta -\frac{1}{2j} - \frac{1}{3(1-\mu)} \right)}, \quad \forall \; j > j_0.
\]
Hence, we obtain the recurrence relation \eqref{recurrence Qj}.

We remark here that under the previous argument, if $\beta - \frac{1}{2j} - \frac{1}{3(1-\mu)} \not= 0$ for all $j \ge 1$, then $Q_j =0$ for all $j \ge 1$. In this case, $Q(r) \equiv \frac{1}{1-\mu}$, which is a constant solution and not our focus. Hence, to obtain a non-constant solution, we must give the vanishing condition that $\beta - \frac{1}{2j_0} - \frac{1}{3(1-\mu)} = 0$ for some $j_0 \ge 2$, which exactly matches the choice of $\beta$.

\mbox{}

\noindent \textit{Step 2. Analyticity of $(Q,f)$ near the origin.}
First of all,  recalling \cite[Lemma B.1]{guoMahirJangpolytropicgap}, there exists a universal constant $a>0$ such that for all $L\in\NN$,
\be
    \sum_{\substack{i+j=L \\ i,j>0}} \frac{1}{i^2 j^2} \le \frac{a}{L^2},    
    \label{combinatorics ineq}
\ee
which, by using the fact that $Lj \ge i$ for any $1 \le  i,j \le L-1$, yields that
\be\label{combinatorics ineq 1}
    \sum_{\substack{i+j=L \\ i,j>0}} \frac{1}{i j^3} \leq \sum_{\substack{i+j=L \\ i,j>0}} \frac{L}{i^2 j^2} \leq \frac{a}{L}.
\ee
Next, we are devoted to the estimates of coefficients of the Taylor expansion determined in \eqref{recurrence Qj}. Precisely, we claim that there exists $K, \alpha >0$, such that 
\be
|Q_j| \le \frac{K^{j-\alpha}}{j^2}, \quad \forall \; j \ge j_0,
\label{bound for Q_j}
\ee
where $K, \alpha>0$ satisfy
\be
|Q_{j_0}| \le \frac{K^{j_0 -\alpha}}{j_0^2} \quad \text{ and } \quad \frac{a K^{-\alpha}}{\frac{1}{2j_0} - \frac{1}{2(j_0+1)}} < \frac{1}{2}.
\label{initial Q_j}
\ee

In fact, we proceed by induction. For $1 \le j \le j_0$, it automatically holds with \eqref{recurrence Qj} and \eqref{initial Q_j}. We now assume that \eqref{bound for Q_j} holds for any $j_0 \le j \le L$, then by using \eqref{recurrence Qj}, \eqref{combinatorics ineq}, \eqref{combinatorics ineq 1}, \eqref{bound for Q_j} and \eqref{initial Q_j}, we obtain
\begin{align*}
     Q_{L+1} &= \frac{1}{2(L+1) \left( \beta - \frac{1}{2(L+1)} - \frac{1}{3(1-\mu)} \right)} \sum_{\substack{i+j=L+1 \\ i,j>0}} \left( \frac{2i}{2j+3} + (1-\mu) \right) Q_i Q_j
     \\
    &\le \frac{1}{2(L+1)\left( \frac{1}{2j_0} - \frac{1}{2(j_0+1)} \right)}\sum_{\substack{i+j=L+1 \\ i,j>0}}  \left( \frac{2i}{2j+3} + (1-\mu) \right) \frac{K^{L+1 -2 \alpha}}{i^2 j^2} \\
    & \le \frac{K^{L+1-2 \alpha} }{2(L+1)\left( \frac{1}{2j_0} - \frac{1}{2(j_0+1)} \right)}\sum_{\substack{i+j=L+1 \\ i,j>0}} \left( \frac{1}{ij^3} + \frac{1}{i^2j^2} \right) \le \frac{K^{L+1-\alpha}}{(L+1)^2},
\end{align*}
which yields the validity of \eqref{bound for Q_j} with $j=L+1$, hence we have closed the induction and verified the claim.

The analyticity of $Q$ at the origin then follows from Cauchy's convergence criterion for power series. Precisely, there exists $\epsilon = \epsilon(K) \ll 1$ such that the series
\[
Q(r) =  \sum_{j=0}^\infty Q_{j} r^{2j}, \quad \text{ with } Q_j \text{ determined in \eqref{recurrence Qj},} 
\]
is absolutely convergent on $[0,\epsilon]$. The analyticity of $f(r) =\sum_{j=0}^\infty f_j r^{2j}$ on $r \in [0,\epsilon]$ simply follows \eqref{recurrence relation 1} and the analyticity on $[0,\epsilon]$ of $Q$.

\mbox{}

\noindent \textit{Step 3. Extend $(Q,f)$ to $[0,+\infty)$.} 
At this stage, our objective is to extend the previously constructed solution to $\overline{\RR^+}$, and verify that both $f(r)$ and $Q(r)$ will asymptotically decay to $0$ and the solution curve (see the red curve in Figure \ref{fig3})) remains within $\calM$ with
\[ \mathcal M :=\left\{(Q,f) \Big| \;  0< Q<Q_0, \; \frac{Q}{3}<f<\beta\right\}.\]
Here, recalling \eqref{ODE system} or Figure \ref{fig3}, we observe that both $(Q(r)$ and $f(r))$ are strictly decreasing once the solution curve stays within $\calM$, i.e.
\be 
Q'(r) < 0, f'(r)<0 \quad \text{ if} \quad (Q(r),f(r)) \in \text{ $\mathcal M$}.
\label{M: decreasing}
\ee
We first claim that there exists $\nu \in (0,\epsilon)$ such that $(Q(r),f(r)) \in \calM, \; \forall \; r \in (0,\nu]$. To verify this, since
\[
\frac{d^j}{dr^j} Q(0) = 0, \quad \forall \; 1 \le j < 2j_0  \qquad \text{ and } \qquad
\frac{d^{2 j_0}}{dr^{2j_0}} Q(0) = (2j_0)! Q_{j_0} <0,
\]
it follows that both $Q(r)$ and $f(r)$ are strictly decreasing on $[0,\nu]$ for some $0<\nu \le  \epsilon \ll 1$. Consequently, the portion of the solution curve for $r \in [0,\nu]$, which begins at $P_0$ \eqref{P_0: def}, remains within the lower-left area of $P_0$. Furthermore, it continues to be representable as a graph, meaning that there is a smooth function $\calF$ with 
\[
f = \calF(Q)  \quad \text{ on } \quad  Q \in \left[Q_0 - \nu_Q, Q_0 \right],
\]
for some $0 <\nu_Q \ll 1$. By chain rule, \eqref{ODE system}, \eqref{(Q,f), expansion}, \eqref{recurrence relation 1}  \eqref{recurrence Qj} and $j_0 \ge J >1$,
\begin{align*}
    \mathcal F'(Q_0)
    &=\frac{df}{dQ}\Big|_{(Q,f)=P_0}
    = \lim_{r \to 0^+}\frac{(Q(r)-3f(r))(\beta-f(r))}{(1-\mu)Q^2(r)-Q(r)} 
    \\
    &
    = \frac{\beta - f_0}{(1-\mu) Q_0} \frac{Q_{j_0} -3f_{j_0}}{Q_{j_0}}
    = \frac{1}{2j_0} \left( 1- \frac{3}{2j_0 +3} \right) < \frac{1}{2j_0} \le \frac{1}{4} < \frac{1}{3}.
\end{align*}
Thus, the slope of the solution curve at $P_0$ is strictly less than that of the line $f= \frac{1}{3} Q$, which contains the lower boundary of $\calM$. So by adjusting $0< \nu \ll 1$ if necessary and using the smoothness of $\calF$, we conclude that the curve $\{(Q(r), f(r))\}_{r \in(0,\nu]}$ lies above $f = \frac{1}{3} Q$. This has verified the claim.
\vspace{2mm}

Next, we prove that the solution curve remains in $\mathcal M$ for all $r\in(0,\infty)$. In fact, ensured by the Cauchy-Lipschitz theory, with the strict decay property of the solution within $\calM$ (see \eqref{M: decreasing}), there are only three possible scenarios:
\begin{enumerate}
    \item there exists $r_{E1} \in (0,\infty]$, such that the solution exits the region $\mathcal M$ at a point $\left( Q_{E1},\frac{Q_{E1}}{3} \right)$ through the line $f=\frac{Q}{3}$, where $Q_{E1} = Q(r_{E1})\in \left(0, Q_0 \right)$;
    \item there exists $r_{E2} \in (0,\infty]$, such that the solution exits the region $\mathcal M$ at a point $(0,f_{E2})$ through the $f$-axis, where $f_{E2} = f(r_{E2})\in(0,f_0)$;
    \item the solution escapes $\mathcal M$ at the origin $O$.
\end{enumerate}

We now eliminate scenarios $(1)$ and $(2)$ by contradiction arguments. 

For the scenario $(1)$, we assume that it occurs. Since $(Q(r),f(r))$ is strictly decreasing before reaching the line $f=\frac{Q}{3}$, together with Cauchy-Lipschitz theory, the solution curve can be further extended on $Q\in[Q_{E1},Q_0]$ and represented as $Q \mapsto \calF(Q)$ with $\calF$ a smooth function.

On the one hand, since $\{(Q(r),f(r))\}_{r \in (0,r_{E_1})} \subset \calM $, it follows that $f(r) > \frac{1}{3} Q(r)$ for any $r \in (0,r_{E_1})$ and $f(r), Q(r)$ is strictly decreasing on $r \in (0,r_{E_1})$, thus together with $f(r_{E_1}) = \frac{1}{3}Q(r_{E_1})$,
\[
\calF'(Q_{E1})
= \lim_{r \to r_{E_1}^-} \frac{f(r) - f(r_{E_1})}{Q(r)-Q(r_{E_1})}
\ge \lim_{r \to r_{E_1}^-} \frac{\frac{1}{3}Q(r) - \frac{1}{3} Q(r_{E_1})}{Q(r) - Q(r_{E_1})} = \frac{1}{3}.
\]

On the other hand, recalling the ODE system \eqref{ODE system} together with $f(r_{E_1}) = \frac{1}{3}Q(r_{E_1})$ again, we compute
\begin{align*}
    \mathcal F'(Q_{E1}) = 
    \lim_{r \to r_{E1}^-}\frac{(Q(r)-3f(r))(\beta-f(r))}{(1-\mu)Q^2(r)-Q(r)} =0< \frac{1}{3},
\end{align*}
which contradicts our assumption and hence we have ruled out the scenarios $(1)$.

Regarding the second scenario $(2)$, by an argument analogous to that of the first case, the solution curve can be extended on $f \in [f_{E2}, f_0]$ and represented as $f \mapsto \calQ(f)$ with $\calQ$ a smooth function, and thus
\be
\{ \left(Q(r), f(r) \right) \}_{r \in \big(0,r_{E_2} \big)} 
=\{ \left( \calQ(f), f \right) \}_{f \in \big(f_{E_2}, f_0 \big)}
\subset \calM.
\label{case 2: one hand}
\ee
Recall \eqref{ODE system} again, $\calQ(f)$ is governed by the equation
\be
\calQ'= \frac{(1-\mu) \calQ^2 - \calQ}{(\calQ -3f) (\beta -f)}, \quad \forall\; f  \in  [f_{E2}, f_{0}],
\label{case 2: ODE}
\ee
whose local wellposedness near $f=f_{E_2}$ is well established by the Cauchy-Lipschitz theory. Nevertheless, besides the nontrivial solution curve given in \eqref{case 2: one hand}, we observe that $\{ (0, f)\}_{f\in [f_{E_2}, f_0]}$ is also a satisfied solution curve, which leads to a contradiction and hence we have ruled out the scenario $(2)$.

Consequently, we have shown that the solution curve could only exist $\mathcal M$ at the origin $O$, i.e. in scenario $(3)$. To conclude this step, it suffices to verify that 
\be
\lim_{r\to \infty}(Q(r),f(r))=(0,0).
\label{Step 3: decay of Q,f}
\ee
In fact, similar to the previous argument, the solution curve can be further smoothly extended and represented as $Q \mapsto \calF(Q)$ on $[0,Q_0]$. Additionally, using \eqref{M: decreasing} again, the function $r \mapsto Q(r)$ is one-to-one and thus $r$ can be expressed by $r= \calR(Q)$ on $Q \in [0,Q_0]$, with $\calR$ solving
\[
    \mathcal R'(Q) = \mathcal R(Q) \frac{\beta-\mathcal F(Q)}{(1-\mu) Q^2- Q},
\]
which follows from the inverse function theorem and \eqref{ODE system}. Then, a direct computation together with \eqref{ODE: initial assumption}, \eqref{recurrence relation 1}, \eqref{M: decreasing} and the fact that $\mathcal R(\frac{Q_0}{2})\in(0,\infty)$ yields that 
\begin{align*}
    \lim_{Q \to 0^+}\ln \mathcal R(Q) &=\ln \mathcal R\left(\frac{Q_0}{2} \right) +\lim_{Q \to 0^+}\int_{\frac{Q_0}{2}}^{Q} \frac{\beta-\mathcal F(\tilde Q)}{(1-\mu)\tilde Q^2-\tilde Q} \ d\tilde Q \\
    &> \ln \mathcal R\left(\frac{Q_0}{2}\right) +(\beta - f_0)\lim_{Q \to 0+}\int_{\frac{Q_0}{2}}^{Q} \left( \frac{1}{\tilde Q-\frac{1}{1-\mu}}-\frac{1}{\tilde Q} \right) \ d\tilde Q 
    = + \infty,
\end{align*}
and thus we have verified \eqref{Step 3: decay of Q,f} and finished this step.

\mbox{}

\noindent \textit{Step 4. Uniqueness.} Assume that there exist two smooth solutions $(Q^{(1)},f^{(1)})$ and $(Q^{(2)},f^{(2)})$ solving the system \eqref{ODE system} with the same initial conditions \eqref{ODE: initial assumption}, then by the $C^\infty$ continuity of $Q$ at the origin and a similar argument as in \textit{Step 1} and \textit{Step 3}, it can be verified that $(Q^{(i)},f^{(i)})$ satisfies
\begin{align}\label{condition to unique}
    Q_0^{(1)} = Q_0^{(2)} = \frac{1}{1-\mu},
\quad 
Q_j^{(1)} = Q_j^{(2)} = 0 \quad \left( \text{with } 1 \le j < j_0-1 \right), \quad 
Q_{j_0}^{(1)} = Q_{j_0}^{(2)}  < 0,
\end{align}
and both $Q^{(i)}(r)$ and $f^{(i)}(r)$ are strictly decreasing on $\RR^+$ for both $i=1,2$. Next, we study the difference between these two solutions. Denote
\[
(\triangle Q,\triangle f) := (Q^{(1)}-Q^{(2)}, f^{(1)} -f^{(2)}),
\]
which solves the following ODE system:
\[
\begin{cases}
    (\triangle Q)' = \frac{1}{\beta -f^{(1)}} \frac{(1-\mu)(Q^{(1)}+Q^{(2)}) \triangle Q - \triangle Q}{r} - \frac{(1-\mu) (Q^{(2)} )^2 -Q^{(2)}}{r} \frac{\triangle f}{(\beta-f^{(1)})(\beta-f^{(2)})}, \vspace{0.1cm}\\
    (\triangle f)' = \frac{\triangle Q - 3\triangle f}{r}.
\end{cases}
\]
We define
\[
\calE(r)  : = |\triangle Q(r)|^2 + |\triangle f(r)|^2,
\]
and compute
\begin{align*}
         \;\; \calE' &= I+II, \\
      \text{ with } \ \ I &= 2 \frac{\triangle f \triangle Q -3 (\triangle f)^2}{r} 
    + 2\frac{(1-\mu) (Q^{(1)}+Q^{(2)}) -1}{(\beta -f^{(1)})r} (\triangle Q)^2, \\
        \; II&=  -2 \frac{(1-\mu) (Q^{(2)})^2 -Q^{(2)}}{r} \frac{1}{(\beta -f^{(2)})(\beta-f^{(1)})} \triangle f \triangle Q,
     \end{align*}
By using Cauchy inequality, strictly decreasing fact of $Q^{(i)}(r)$ and $f^{(i)}(r)$, \eqref{ODE: initial assumption} and \eqref{recurrence relation 1}, we bound $I$ by
\begin{align*}
    I \leq  \Big(1+ 2\frac{(1-\mu) (Q^{(1)}(0)+Q^{(2)}(0)) -1}{(\beta -f^{(1)}(0))}\Big) \frac{\mathcal E(r)}{r} = (4j_0+1)\frac{\mathcal E(r)}{r}.
\end{align*}
Applying condition \eqref{condition to unique} along with the Cauchy's inequality, there exists constant $M= M(j_0,\mu, Q^{(1)}, Q^{(2)})>0$ such that
\begin{align*}
    II \le  M |\triangle f \triangle Q| \leq M \calE(r),
\end{align*}
which yields that
\[
    \calE'(r) = I+II \leq (4j_0 + 1) \frac{\calE(r)}{r} + M \calE(r).
\]
Applying Gronwall's inequality,
\begin{align*}
    0 \le \calE(r) \le  \calE(\delta)\left( \frac{r}{\delta} \right)^{4 j_0 +1}  e^{M(r-\delta)}, \quad \text{ uniformly for } r,\delta>0.
\end{align*}
Since $(Q^{(i)},f^{(i)})$ (with $i=1,2$) satisfy the same initial conditions \eqref{condition to unique}, it follows that
\[
\partial_r^{j} \calE(r) \Big|_{r=0} =0, \quad \forall \; 0 \le j \le 4j_0+2,
\]
which implies that
\[
\lim_{\delta \to 0+} \frac{\calE(\delta)}{\delta^{4j_0+1}} e^{-M \delta} = 0.
\]
By the squeezing theorem, it follows that $\calE(r) \equiv 0$ for any $r \ge 0$. Consequently, we conclude $Q^{(1)} \equiv Q^{(2)}$ and this completes the uniqueness.

\mbox{}

\noindent \textit{Step 5. Decay estimate of $Q$ and its derivative.} 
Based on the discussion in \textit{Step 3}, both $Q(r)\searrow 0$ and $f(r)\searrow 0$ as $r\to \infty$. Next for any fixed $\nu_0>0$, recalling the choice of $\beta= f_0+ \frac{1}{2j_0}$, there exists $M_0 = M_0(\nu_0)\gg 1$ such that
\begin{align*}
    & \qquad \quad 0<(1-\nu_0)Q(r)\leq Q(r)-(1-\mu)Q^2(r)\leq Q(r), \quad \forall \; r \ge M_0, \\
    & \text{ and } \; \; -2j_0\leq -\frac{1}{\beta-f}\leq -\frac1\beta <0.
\end{align*}
From the ODE \eqref{ODE system} satisfied by $Q(r)$, these inequalities imply
\[
- \frac{2j_0Q}{r} \le Q'(r) \le -\frac{(1-\nu_0) Q}{\beta r}, \quad \quad \forall \; r \ge M_0,
\]
hence
\[
Q(r) \le  Q(M_0) \left( \frac{r}{M_0} \right)^{-\frac{1}{\beta}(1-\nu_0)}, \quad \forall \; r \ge M_0.
\]
In particular, since $\beta < \frac{1}{2}$, we can choose $\nu_0 >0$ sufficiently small such that $\frac{1-\nu_0}{\beta}< 2$. This allows us to find a constant $C=C(\beta,M_0) > 0$ such that
\be
Q(r) \le C \la r \ra^{-2}, \quad \forall \; r \ge 0.
\label{Lemma 2.2 decay 1}
\ee
Moreover, using the second equation in \eqref{ODE system}, there exists a constant $C= C(\beta,M_0)>0$ such that
\be
0< f(r) = \frac{1}{r^3} \int_0^r Q(s) s^2 ds \le \frac{C}{r^3} \int_0^r \frac{s^2}{\la s \ra^2} ds \le C \la r \ra^{-2}, \quad \forall \; r \ge 0.
\label{lemma 2.2 decay 2}
\ee
For the derivatives of $Q(r)$ and $f(r)$, applying $\partial_r^{j-1}$ (with $j \ge 1$) to \eqref{ODE system} and using the base decay estimates \eqref{Lemma 2.2 decay 1} and \eqref{lemma 2.2 decay 2}, a straightforward induction then shows that
\begin{align}
|\partial_r^{(j)}Q(r)| \le C \la r \ra^{-2-j},
\label{asym Q higer order}
\end{align}
for some constant $C=C(\beta,M_0,j)>0$.

Finally, combining the $C^\infty$ smoothness of $Q$, the above decay bound on $\partial_r^{(j)}Q$ (cf. \eqref{Lemma 2.2 decay 1} and \eqref{asym Q higer order}), and the standard identity $D g= \frac{x}{|x|} \cdot \na g$ for radial function $g$, we can verify that $Q \in H^\infty(\RR^3)$. This completes the proof of the lemma.
\end{proof}

Based on Lemma \ref{lem: solve ODE}, one naturally obtains the existence of a smooth profile satisfying \eqref{PDE Q}. More precisely, we have the following result.

\begin{corollary}[Existence of smooth self similar profile]
\label{thm: existence of the profile}
Under the assumptions of Lemma \ref{lem: solve ODE}, for any fixed $[Q]_{j_0} <0$, there exists a unique smooth radial symmetric solution $Q\in H^\infty(\RR^3)$ to \eqref{PDE Q} satisfying
    \be
    Q(0) = \frac{1}{1-\mu}, \quad \text{ and } \partial_r^{(2j_0)} Q(0) = (2j_0)! [Q]_{j_0} <0.
    \label{smooth profile: initial data}
    \ee
    In particular, $Q(r)$ is strictly radially decreasing with respect to $r \ge 0$ and satisfies the decay estimates \eqref{decay estimates of Q}.
\end{corollary}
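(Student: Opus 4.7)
The plan is to deduce the corollary directly from Lemma \ref{lem: solve ODE} by verifying that the radial ODE solution $(Q(r), f(r))$ constructed there automatically yields a smooth radial solution to the PDE \eqref{PDE Q}. First, I would invoke Lemma \ref{lem: solve ODE} with the prescribed initial data $[Q]_{j_0} < 0$ to obtain the unique smooth $(Q(r),f(r))$ on $\overline{\RR^+}$ satisfying \eqref{ODE system}, the analyticity near the origin \eqref{(Q,f), expansion}–\eqref{recurrence Qj}, the strict positivity and monotone decay to zero, and the bound \eqref{decay estimates of Q}.

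Next, the goal is to show that the radial function $Q(|x|)$ on $\RR^3$ solves the PDE \eqref{PDE Q}. The key point is that the relation $f(r)=\frac{1}{r^3}\int_0^r Q(s)s^2\,ds$ in \eqref{f: definition}, combined with the explicit formula \eqref{nonlocal term: formula} for the nonlocal term of a radial function decaying at infinity, gives $\nabla \Delta^{-1}Q(y) = f(|y|)\,y$. Substituting this identity into the right-hand side of \eqref{PDE Q} and computing $\nabla Q(y) = \frac{y}{|y|}Q'(|y|)$ and $\beta y\cdot\nabla Q = \beta|y|Q'(|y|)$, the PDE \eqref{PDE Q} is seen to be \emph{equivalent}, in the radial class, to the first-order scalar ODE \eqref{ode Q}, which is in turn equivalent to the system \eqref{ODE system} away from $r=0$. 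Hence the solution provided by Lemma \ref{lem: solve ODE} satisfies \eqref{PDE Q} pointwise for $r>0$, and the analyticity at $r=0$ in Lemma \ref{lem: solve ODE}(1) together with the $C^\infty$ radial extension identity then shows that the equation holds classically on all of $\RR^3$.

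The regularity $Q\in H^\infty(\RR^3)$ follows from the pointwise decay estimates \eqref{decay estimates of Q} for all derivatives of $Q$ combined with the standard Faà di Bruno computation that converts radial derivatives $\partial_r^{(j)}Q$ into Cartesian derivatives of the radial extension; since $\beta < \tfrac{1}{2}$ and $\langle r\rangle^{-2-j}$ is integrable in $\RR^3$ against $r^2$ for every $j\ge 0$, each $\|D^k Q\|_{L^2(\RR^3)}$ is finite. The strict radial monotonicity $Q'(r)<0$ for $r>0$ is precisely \eqref{M: decreasing} restricted to the solution curve contained in $\calM$.

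For uniqueness, suppose $\tilde Q\in H^\infty(\RR^3)$ is another smooth radial solution to \eqref{PDE Q} with the same initial data \eqref{smooth profile: initial data}. Reversing the PDE-to-ODE reduction, $(\tilde Q(r), \tilde f(r))$ with $\tilde f$ defined by \eqref{f: definition} satisfies \eqref{ODE system} with the same values $\tilde Q(0)=\tfrac{1}{1-\mu}$, $\tilde f(0)=\tfrac{1}{3(1-\mu)}$, and $\partial_r^{(2j_0)}\tilde Q(0) = (2j_0)![Q]_{j_0}$. Hence the uniqueness statement of Lemma \ref{lem: solve ODE} (established in Step 4 of that proof via the Taylor recurrence at the origin and Grönwall away from it) forces $\tilde Q \equiv Q$. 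The main subtlety throughout is the degeneracy of \eqref{ODE system} at $r=0$ caused by the $\frac{1}{r}$ singularity, but this is already handled inside Lemma \ref{lem: solve ODE}, so no new analytic work is required here beyond the equivalence of the PDE and ODE formulations in the radial class.
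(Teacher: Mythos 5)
Your proposal is correct and follows essentially the same route as the paper: the paper gives no separate proof of this corollary, treating it as an immediate consequence of Lemma \ref{lem: solve ODE} via the radial reduction of \eqref{PDE Q} to \eqref{ode Q}--\eqref{ODE system} (using \eqref{nonlocal term: formula} and \eqref{f: definition}) already carried out at the start of Section \ref{sec: ODE}, which is precisely the equivalence you spell out, together with the decay, monotonicity, $H^\infty$ membership, and uniqueness transferred from the lemma. Your explicit reversal of the reduction for uniqueness in the radial $H^\infty$ class (where the Sobolev decay at infinity justifies applying \eqref{nonlocal term: formula}) is just a slightly more detailed version of the paper's implicit argument.
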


\section{Linear theory}
\label{sec: linear theory}
Starting from this section, we fix $\mu\in[0,\frac{1}{3})$ and a positive integer $j_0\geq \frac{3(1-\mu)}{1-3\mu} +1>1$, then we have
\be
\beta =  \frac{1}{3(1-\mu)} + \frac{1}{2j_0}\in \left(0,\frac{1}{2} \right).
\label{beta: def}
\ee
We now introduce the self-similar coordinate:
\be
 y =\frac{x}{\lambda^{2\beta}}, \quad \frac{d\tau}{dt} = \frac{1}{\lambda^2}, \quad 
 \tau\Big|_{t=0} = 0, \quad \frac{\lambda_\tau}{\lambda} = -\frac{1}{2},
 \label{ss coordinte}
\ee
and the corresponding renormalization
\be
\rho(t,x) = \frac{1}{\lambda^2} \Psi \left( \tau, y \right).
\label{ss renormalization}
\ee
The equation \eqref{equation: KS with damping} can be mapped into the renormalization system of $\Psi$:
\be
\partial_\tau \Psi = \lambda^{2-4\beta} \Delta \Psi - \Psi - \beta y \cdot \na \Psi + \na \Psi \cdot \na \Dein \Psi + (1-\mu) \Psi^2,
\label{renormalization KS}
\ee
where, from \eqref{ss coordinte}, the function $\lambda$ is
\be
\lambda(\tau) = \lambda_0 e^{-\frac{1}{2} \tau}, \quad \forall \; \tau \ge 0,
\label{lambda: form}
\ee
with initial data given by $\lambda \Big|_{\tau=0}=\lambda_0>0$. 

Next, we consider a solution to the above equation in perturbative form
\[
    \Psi = Q+ \ep,
\]
where $Q$ is a solution to the PDE \eqref{PDE Q}, corresponding to the fixed parameters $\mu$ and $j_0$, whose existence is established in Lemma \ref{lem: solve ODE}. Direct computation shows 
the residue $\ep$ solves
\be
\partial_\tau \ep  =  \calL \ep + \lambda^{2-4\beta} \Delta \Psi + N(\ep),
\label{linearized eq}
\ee
where $\calL$ is the linear operator
\be
\calL \ep = -(\ep + \beta y \cdot \na  \ep) + \na \ep \cdot \na \Dein Q + \na Q \cdot \na \Dein \ep + 2(1-\mu) Q \ep,
\label{linearized op}
\ee
and $N(\ep)$ is the nonlinear term
\be
N(\ep) = \na \ep \cdot \na \Dein \ep  + (1-\mu) \ep^2.
\label{nonlinear term}
\ee
For the linear operator $\calL$ given in \eqref{linearized op}, we will prove its coercivity in the sense of $L_w^2$ for some weight $w$ sufficiently singular at the origin. Precisely, we have the following result:
\begin{proposition}[Coercivity of $\calL$ in $L_w^2$]
\label{prop: coer}
    For $\calL$ given in \eqref{linearized op}, there exists a weight defined by
    \be
    w(y) :=  |y|^{-A} + B,
    \label{singular weight}
    \ee
    where $(A,B) \in \ZZ_+ \times \RR_+$ satisfy $A \gg 4 j_0 +3$ and $\frac{A}{4} \in  \ZZ_{>0}$, such that $\calL$ satisfies the coercivity in the following sense:
    \be
    \left( \calL g,g \right)_{L_w^2} \le - \frac{1}{8} \|g\|_{L_w^2}^2, \quad \forall \; g \in L_w^2(\RR^3) \quad \text{radially symmetric}.
    \label{L: coer}
    \ee
\end{proposition}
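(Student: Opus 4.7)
The plan is to represent $(\calL g,g)_{L_w^2}$ as an integral of a pointwise multiplier against $g^2$ plus a single residual nonlocal term, and then to exploit $A\gg j_0$ to extract damping from the singular part of the weight $w$.

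\textbf{Step 1. Reduction to a pointwise multiplier plus $T_3$.} I integrate by parts on the drift $-\beta y\cdot\na g$ and on the aggregation piece $\na g\cdot\na\Dein Q$ of $\calL g$. Using the radial identity $\na\Dein Q(y)=yf_Q(|y|)$ from \fref{nonlocal term: formula}--\fref{f: definition}, the ODE identity $y\cdot\na f_Q = Q-3f_Q$ from \fref{ODE system}, and $y\cdot\na w = -A|y|^{-A}$ for $w=|y|^{-A}+B$, a direct calculation gathers all local contributions and the $Q$-side nonlocal contribution into
\begin{align*}
(\calL g,g)_{L_w^2} = \int M(y)\,g^2(y)\,dy + T_3, \qquad T_3 := \int \na Q(y)\cdot\na\Dein g(y)\,g(y)\,w(y)\,dy,
\end{align*}
where the pointwise multiplier is
\begin{align*}
M(y) = w(y)\!\left(-1 + \tfrac{3\beta}{2} + \tfrac{3-4\mu}{2}Q(|y|)\right) + \tfrac{A}{2}\bigl(f_Q(|y|)-\beta\bigr)|y|^{-A}.
\end{align*}

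\textbf{Step 2. Sign and size of the multiplier.} By Lemma \ref{lem: solve ODE}, $f_Q(r)<\beta$ globally, with $\beta-f_Q(r)\ge \tfrac{1}{2j_0}-O(r^{2j_0})$ near $r=0$ and $\beta-f_Q(r)\to\beta$ as $r\to\infty$, so the second term of $M$ contributes damping of size at least $\tfrac{A}{4j_0}|y|^{-A}$. The bracket in the first term is bounded by a constant depending only on $(\beta,\mu)$; it is strictly negative for $|y|$ large (since $Q\to 0$ and $-1+\tfrac{3\beta}{2}<-\tfrac14$) and at most positive of order $O(1)$ near $0$. Taking $A\gg j_0$ and $B$ small, and using that every $g\in L_w^2$ with $A$ large must vanish to order greater than $(A-3)/2$ at the origin so that the positive $B$-part of the first term near the origin is dominated by a tiny fraction of the singular damping, I obtain
\begin{align*}
\int M(y)\,g^2(y)\,dy \le -\tfrac14\|g\|_{L_w^2}^2.
\end{align*}

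\textbf{Step 3. Nonlocal term $T_3$ via Hardy.} For radial $g$, $\na\Dein g(y)=yf_g(|y|)$, so $T_3 = \int Q'(|y|)\,|y|\,f_g(|y|)\,g(y)\,w(y)\,dy$. Lemma \ref{lem: solve ODE}(1) gives $Q_j=0$ for $0<j<j_0$, which forces $|Q'(r)\,r|\lesssim r^{2j_0}$ near $r=0$, and \fref{decay estimates of Q} gives $|Q'(r)\,r|\lesssim\la r\ra^{-1}$ in the far field. Writing $G(r):=\int_0^r g(s)s^2\,ds$ and exploiting the sufficient vanishing of $g$ at the origin to kill the boundary terms in integration by parts, one obtains a weighted Hardy bound $\int G^2 r^{-A-4}\,dr\le \tfrac{4}{(A+3)^2}\int g^2 r^{-A+2}\,dr$, which translates into $\|f_g\|_{L^2_{|y|^{-A}}}\le \tfrac{2}{A+3}\|g\|_{L^2_{|y|^{-A}}}$. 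Combined with Cauchy--Schwarz and the bounds on $|Q'(r)r|$, this yields $|T_3|\le C_*\|g\|_{L_w^2}^2$ with $C_*\to 0$ as $A\to\infty$. Choosing $A$ large enough so that $C_*\le \tfrac18$, the gain from Step 2 absorbs $T_3$ and delivers $(\calL g,g)_{L_w^2}\le -\tfrac{1}{8}\|g\|_{L_w^2}^2$.

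\textbf{Main obstacle.} The hardest part is controlling $T_3$: the nonlocal operator $\na\Dein g$ interacts with the very singular weight $|y|^{-A}$, and the Hardy-type argument must use the automatic high-order vanishing of $g$ at the origin both to kill boundary terms and to produce a constant decaying like $1/A$. A secondary technical point is the pointwise estimate on $M$ in the intermediate regime $|y|\sim 1$, where sign control of $M/w$ requires the quantitative monotonicity and decay properties of $Q$ and $f_Q$ established by the phase-portrait analysis of Lemma \ref{lem: solve ODE}.
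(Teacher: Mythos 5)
Your Steps 1--2 reproduce the paper's computation: after integration by parts the local terms collapse to exactly the multiplier $w\bigl(-1+\tfrac{3\beta}{2}+(\tfrac32-2\mu)Q\bigr)+\tfrac{A}{2}(f_Q-\beta)|y|^{-A}$, and the damping of size $\tfrac{A}{4j_0}|y|^{-A}$ extracted from $\beta-f_Q\ge \beta-f_Q(0)=\tfrac{1}{2j_0}$, combined with $A\gg j_0$ and $B$ small, is the same mechanism as the paper's estimates of $I_{SI}$ and $I_{LO}$. One remark on Step 2: the claim that every $g\in L^2_w$ must vanish to order greater than $(A-3)/2$ at the origin is neither literally true (membership in $L^2_w$ gives no pointwise control) nor needed; the absorption of the positive $B$-part on a bounded region only uses $|y|^{-A}\ge R^{-A}$ there together with $B\ll R^{-A}$, as in \eqref{I LO}.

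The genuine gap is in Step 3. Your weighted Hardy bound $\|f_g\|_{L^2_{|y|^{-A}}}\le \tfrac{2}{A+3}\|g\|_{L^2_{|y|^{-A}}}$ is correct and handles the $|y|^{-A}$-part of $T_3$ with a constant decaying in $A$; this is a fine substitute for the paper's pointwise Cauchy--Schwarz bound \eqref{linear estimate: nonlocal 1}. But the conclusion that $|T_3|\le C_*\|g\|_{L^2_w}^2$ with $C_*\to 0$ as $A\to\infty$ fails for the flat component of the weight: the term $B\int(\na Q\cdot\na\Dein g)\,g\,dy$ is only bounded by $C\,\sup_r|rQ'(r)|\,B\|g\|_{L^2}^2$ (unweighted Hardy), and this constant does not shrink as $A$ grows, nor is $\sup_r|rQ'(r)|$ known to be small --- Lemma \ref{lem: solve ODE} gives only qualitative decay, not a bound by $\tfrac18$. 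Hence the final step ``choose $A$ large so that $C_*\le\tfrac18$'' does not close against the available margin $-\tfrac{B}{4}\|g\|_{L^2}^2$. The repair is the device the paper uses in \eqref{I NLO 2} (and which you already invoke in Step 2): fix $R_1$ as in \eqref{R1: choice linear sta} so that the contribution from $|y|\ge R_1$ carries a small constant thanks to the decay of $rQ'(r)$, and absorb the contribution from $|y|\le R_1$ into the singular damping via $|y|^{-A}\ge R_1^{-A}$ and $B\ll R_1^{-A}$; the order of choices must then be $R_1$ first, then $A$, then $B$. With that amendment your argument coincides with the paper's proof.
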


\begin{remark}
    Due to the limited quantitative properties of $Q$ available from Lemma \ref{lem: solve ODE} and the nonlocal structure of the operator $\mathcal{L}$, it is difficult to identify the optimal value of $A$ explicitly, in contrast with the situation for the heat equation discussed in Section \ref{sec:slh}. Nevertheless, the existence of some $A$ satisfying \eqref{L: coer} is sufficient for our subsequent analysis.
\end{remark}

\begin{proof}[Proof of Proposition \ref{prop: coer}]
    First of all, for any $g \in L_w^2$, by integration by parts, 
    \eqref{nonlocal term: formula} and the fact that $y \cdot \na w = -A |y|^{-A}$,
    \begin{align*}
      -\beta \int (y \cdot \na g) g w dy
        = \frac{\beta}{2}\left( -A + 3 \right)\int g^2 |y|^{-A} dy 
        +\frac{3B \beta}{2} \int g^2 dy,
    \end{align*}
    and
    \begin{align}
    \int \left( \na g \cdot \na \Dein Q \right) g w dy 
     = -\frac{1}{2} \int Q g^2 w dy + \frac{A}{2} \int g^2 f_Q |y|^{-A}dy,
     \label{coercivity: IBP method}
    \end{align}
    with which, recalling the definition of $\calL$ ( see \eqref{linearized op}),  we split $(\calL g,g)_{L_w^2}$ into the following three parts:
    \be
    (\calL g,g)_{L_w^2}
    = I_{SI} + I_{LO} + I_{NLO},
    \label{(Lg,g) L2 form}
    \ee
    where
    \begin{align*}
        & I_{SI} : =\left( -1 + \frac{\beta}{2} (-A+3) \right) \int g^2 |y|^{-A} dy + \frac{A}{2} \int g^2 f_Q |y|^{-A}dy + \left( \frac{3}{2} -2 \mu \right) \int g^2 Q |y|^{-A} dy , \\
        & I_{LO} :=\left( -B + \frac{3B \beta}{2} \right) \int g^2 dy + \left(\frac{3}{2}-2\mu \right) B \int g^2 Q dy, \quad I_{NLO}:=\int \left( \na Q \cdot \na \Dein g \right) g w dy.
    \end{align*}
     Here $I_{SI}$ denotes the sum of local terms involving the singular weight $|y|^{-A}$, $I_{LO}$ denotes the sum of local terms without singular weight, and $I_{NLO}$ denotes the sum of nonlocal terms.

   \mbox{}
   
    \noindent \underline{\textit{Estimate of $I_{SI}$}.} 
    By Lemma \ref{lem: solve ODE},  $Q$ and $f_Q$ are both radially decreasing. Moreover, recalling the initial conditions of $(Q(0),f_Q(0))$ given in \eqref{ODE: initial assumption} and the definition of $\beta$ in \eqref{beta: def}, $I_{SI}$ can be estimated by
\begin{align}
    I_{SI} 
     & \le \left( -1 + \frac{\beta}{2} (-A+3) \right) \int g^2 |y|^{-A} dy 
     + \left( \frac{A}{2} f_Q(0) + \left(\frac{3}{2}-2\mu \right) Q(0) \right) \int g^2 |y|^{-A} dy \notag\\
     & = \left( -1 + \frac{\frac{1}{3(1-\mu)}+ \frac{1}{2j_0}}{2} (-A+3) + \frac{A}{6(1-\mu)} + \left(\frac{3}{2}-2\mu \right) \frac{1}{1-\mu} \right) \int g^2 |y|^{-A} dy \notag\\
     & = \left( 1 + \frac{-A+3}{4j_0} \right) \int g^2 |y|^{-A} dy,
     \label{coer: singularity weight estimate 1}
\end{align}
where the coercivity becomes linearly stronger as $A \gg 1$ grows.

We remark here that the derivation of \eqref{coer: singularity weight estimate 1} is rather delicate. Precisely, one might naturally expect that the term $g + \beta y \cdot \na g$ alone provides enhanced coercivity with increasing $A$. However, this gain is offset by the contribution $\frac{A}{2} \int g^2 f_Q |y|^{-A} dy$, which greatly diminishes the coercivity originating from the scaling term. Nevertheless, with the estimate given in \eqref{coer: singularity weight estimate 1}, we find that this essential obstacle can be almost exactly controlled via the fact that $\beta -f_Q(r) \ge \beta -f_Q(0) = \frac{1}{2j_0}>0$.

\mbox{}

\noindent \underline{\textit{Estimate of $I_{LO}$}.} Again, by Lemma \ref{lem: solve ODE}, $Q$ is radially decreasing and $\Big\| |y|^{-\frac{1}{2}} \partial_r Q  \Big\|_{L^2} <\infty$, so there exists $R_1 = R_1(Q) \gg 1$ such that
\be
\frac{3}{2} Q(R_1) \le \frac{1}{1000} \quad \text{ and } \quad 
\Big\| |y|^{-\frac{1}{2}} \partial_r Q  \Big\|_{L^2(|x| \ge R_1)} \le \frac{1}{5000}.
\label{R1: choice linear sta}
\ee
Noting that $\beta< \frac{1}{2}$, we may now estimate the lower-order term $I_{LO}$ as follows:
    \begin{align}
        I_{LO} 
        & \le  \left( -B + \frac{3}{4} B \right) \int g^2 dy + \left(\frac{3}{2}-2\mu \right) B  \left( \int_{|y| \le R_1} g^2 Q dy   + \int_{|y| \ge R_1} g^2 Q dy \right) \notag \\
        & \le -\frac{B}{4} \int g^2 dy + \left( \frac{3}{2} -2\mu \right)B Q(0) R_1^A \int_{|y| \le R_1} g^2 |y|^{-A} dy
        + \left( \frac{3}{2} -2 \mu \right) B  Q(R_1) \|g \|_{L^2}^2 
        \notag \\
        & = -\frac{3B}{16}  \| g \|_{L^2}^2 +  \left( \frac{3}{2} -2 \mu \right)  B Q(0) R_1^A \| g\|_{L_{|y|^{-A}}^2}^2.
        \label{I LO}
    \end{align}
   Given the limited qualitative information about $Q$, the term involving $Q$ cannot be directly controlled, especially the integral of $\int_{|y| \le R_1}\int g^2 Qdy$, where $Q(y) = O(1)$ on $B(0,R_1)$. To address this, we will absorb this term using the previous coercivity \eqref{coer: singularity weight estimate 1} by choosing $0< B\ll 1$ sufficiently small.

\mbox{}

\noindent \underline{\textit{Estimate of $I_{NLO}$}.} By Cauchy's inequality, the nonlocal term involving in $I_{NLO}$ can be  controlled pointwise by
\begin{align}
    & \quad \frac{1}{r^2} \int_0^r |g(s)| s^2 ds
     = \frac{1}{ 4\pi r^2} \int_{B(0,r)} |g(y)| dy = \frac{1}{4 \pi r^2}\int_{B(0,r)} |g(y)| |y|^{-\frac{A}{2}} |y|^{\frac{A}{2}} dy \notag\\
    & \le \frac{1}{4 \pi r^2} \left( \int_{B(0,r)} g^2(y) |y|^{-A} dy \right)^\frac{1}{2} \left( \int_{B(0,r)}  |y|^{A} dy \right)^\frac{1}{2}  = \sqrt{\frac{1}{4 \pi(A+3)}} r^{\frac{A}{2} - \frac{1}{2}} \| g \|_{L_{|y|^{-A}}^2},
    \label{linear estimate: nonlocal 1}
\end{align}
and this induces that
\begin{align}
    & \quad \Bigg| \int \left( \na Q \cdot \na \Dein g \right) g |y|^{-A} dy \Bigg|
     = \int |\partial_r Q| \left(  \frac{1}{|y|^2}  \int_0^{|y|} |g(s)| s^2 ds \right) |g||y|^{-A} dy \notag\\
    & \le \sqrt{\frac{1}{4 \pi(A+3)}} \| g \|_{L_{|y|^{-A}}^2}  \int |\partial_rQ|  |g| |y|^{-\frac{A}{2} - \frac{1}{2}} dy 
     \le  \sqrt{\frac{1}{4 \pi(A+3)}} \Big\| |y|^{-\frac{1}{2}} \partial_r Q \Big\|_{L^2} \| g \|_{L_{|y|^{-A}}^2}^2,
     \label{I NLO 1}
    \end{align}
which, by choosing $A \gg 1$ sufficiently large, can be absorbed in \eqref{coer: singularity weight estimate 1}. Similarly, repeating the previous argument, together with the choice of $R_1 \gg 1$ (see \eqref{R1: choice linear sta}), the $L^2$ component of $I_{NLO}$ can be controlled by:
    \begin{align}
         & \quad B\int \left( \na Q \cdot \na \Dein g \right) g dy  
        \le B \| g \|_{L^2} \left(  \int_{|y| \le R_1} |\partial_r Q| |y|^{-\frac{1}{2}} |g| dy + \int_{|y| \ge R_1} |\partial_r Q| |y|^{-\frac{1}{2}} |g| dy  \right) \notag \\
        & \le \frac{B}{100} \| g \|_{L^2}^2 + 50 B R_1^A \big\| \partial_r Q |y|^{-\frac{1}{2}} \big\|_{L^2}^2 \| g \|_{L_{|y|^{-A}}^2}^2 + 50 B \big\| \partial_r Q |y|^{-\frac{1}{2}} \big\|_{L^2(|y| \ge R_1)}^2 \| g \|_{L^2}^2 \notag \\
        & \le \frac{B}{50} \| g \|_{L^2}^2 
        + 50 B R_1^A \big\| \partial_r Q |y|^{-\frac{1}{2}} \big\|_{L^2}^2 \| g \|_{L_{|y|^{-A}}^2}^2.
        \label{I NLO 2}
    \end{align}
    The idea of handling this term is similar to $I_{LO}$, and we can use the smallness of $0< B \ll 1$ to absorb $B \int_{|y| \le R_1} \left( \na Q \cdot \na \Dein g \right) g dy$ by the coercivity \eqref{coer: singularity weight estimate 1}.
    Hence combining \eqref{I NLO 1} and \eqref{I NLO 2} yields
    \begin{align}
        I_{NLO}
        & \le  \left( \sqrt{\frac{1}{4 \pi(A+3)}} \big\| |y|^{-\frac{1}{2}} \partial_r Q \big\|_{L^2}  + 50 B R_1^A \big\| |y|^{-\frac{1}{2}} \partial_r Q  \big\|_{L^2}^2  \right)\| g \|_{L_{|y|^{-A}}^2}^2 
        + \frac{B}{50} \| g \|_{L^2}^2.
        \label{I NLO}
    \end{align}

    \mbox{}

    \noindent \underline{\textit{Conclusion of coercivity}.} Combining the estimates \eqref{coer: singularity weight estimate 1}, \eqref{I LO} and \eqref{I NLO}, we obtain that
    \begin{align*}
        \left( \calL g, g \right)_{L_w^2}
        & \le 
        -\frac{1}{8} B \| g \|_{L^2}^2
        +  \left( 1 + \frac{-A+3}{4j_0}  + \sqrt{\frac{1}{4 \pi(A+3)}} \Big\| |y|^{-\frac{1}{2}} \partial_r Q \Big\|_{L^2} \right) \int g^2 |y|^{-A} dy \\
        & \quad + \left( \left( \frac{3}{2} -2 \mu \right) B R_1^A Q(0)  + 50 B R_1^A \big\| |y|^{-\frac{1}{2}} \partial_r Q  \big\|_{L^2}^2  \right)\| g \|_{L_{|y|^{-A}}^2}^2,
    \end{align*}
    with $R_1 = R_1(Q) >0$ chosen as in \eqref{R1: choice linear sta}. Next, we choose 
    $A \gg 4j_0 +4$ with $\frac{A}{4} \in \ZZ_{>0}$ such that
    \begin{align}
        1 + \frac{-A+3}{4j_0} \le -1, \quad \text{ and } \quad 
        \sqrt{\frac{1}{4 \pi(A+3)}} \Big\| |y|^{-\frac{1}{2}} \partial_r Q \Big\|_{L^2}
        \le  \frac{1}{100},
        \label{A: choice range}
    \end{align}
    then we choose $0< B=B(A,R_1,\mu) \ll 1$ sufficiently small such that
    \[
    \left( \frac{3}{2} -2 \mu \right) B R_1^A Q(0) +  50 B R_1^A \big\| |y|^{-\frac{1}{2}} \partial_r Q  \big\|_{L^2}^2  \le \frac{1}{100},
    \]
   so that we obtain that
    \begin{align*}
         \left( \calL g, g \right)_{L_w^2}
         \le -\frac{1}{8} \| g \|_{L_{|y|^{-A}}^2}^2
         - \frac{1}{8} B \| g\|_{L^2}^2 = -\frac{1}{8} \| g \|_{L_w^2}^2,
    \end{align*}
    which completes the proof of \eqref{L: coer}.
\end{proof}

In addition, to study the nonlinear stability of $Q$ in Section \ref{sec: nonlinear sta}, it is necessary to establish the coercivity properties of the linearized operator $\calL$ in a higher order Sobolev space $\dot H^{2K+4}$ with $K = \frac{A}{4}$. The precise statement is as follows:

\begin{proposition}[Coercivity of $\calL$ in $\dot H^{2K+4}$]
\label{prop: core H^K}
    Let $A >0 $ be as given in Proposition \ref{prop: coer}, and define $K := \frac{A}{4} \in \ZZ_{\ge 1}$. Then there exists constant $C=C(Q,A,B,j_0,\mu)>0$ such that
    \be
    (\calL g ,g)_{\dot H^{2K+4}} 
    \le - \frac{1}{8} \| g \|_{\dot H^{2K+4}}^2 + C \| g \|_{L^2}^2, \quad \forall \; g \in H_{rad}^{2K+4}.
    \label{coercivity in H^K}
    \ee
\end{proposition}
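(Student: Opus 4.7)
The plan is to take the $\dot H^{2K+4}$ inner product by applying $\Delta^{K+2}$ and pairing with $h := \Delta^{K+2} g$ in $L^2$, so that $\| g \|_{\dot H^{2K+4}}^2 = \| h \|_{L^2}^2$. I split $\calL g$ into four pieces: the scaling part $-g - \beta y \cdot \na g$, the multiplicative part $2(1-\mu) Q g$, the advection part $\vec{V} \cdot \na g$ with $\vec{V} := \na\Dein Q$, and the nonlocal part $\na Q \cdot \na\Dein g$, then extract the principal-order contribution from each. For the scaling piece, the commutator identity $[\Delta^m, y\cdot\na] = 2m\Delta^m$ (proved by induction from $\Delta(y\cdot\na g) = 2\Delta g + y\cdot\na\Delta g$) gives $\Delta^{K+2}(y\cdot\na g) = (y\cdot\na) h + 2(K+2) h$, and one integration by parts $\int (y\cdot\na h)\, h\, dx = -\tfrac{3}{2}\|h\|_{L^2}^2$ then yields the leading damping
\[
\bigl( -g - \beta y \cdot \na g,\, g \bigr)_{\dot H^{2K+4}} = -\bigl( 1 + \beta(2K + \tfrac{5}{2}) \bigr) \|h\|_{L^2}^2,
\]
which grows linearly in $K$ and supplies the essential coercivity.

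For the three profile-dependent pieces, I will write $\Delta^{K+2}(\cdot) = (\text{principal}) + (\text{commutator})$ via Leibniz and pair with $h$. The principal contribution from $2(1-\mu) Qg$ is $2(1-\mu)\int Q h^2\, dx \le 2\|h\|_{L^2}^2$, using the radial monotonicity of $Q$ and $Q(0)=1/(1-\mu)$ from Lemma \ref{lem: solve ODE}. The principal contribution from the advection is $\int (\vec{V}\cdot\na h)\, h\, dx = -\tfrac{1}{2}\int Q h^2\, dx \le 0$ after one integration by parts, using $\na\cdot\vec{V} = \Delta\Dein Q = Q$. The principal nonlocal contribution is $\int (\na Q \cdot \na\Delta^{K+1} g)\, h\, dx$, which carries only $2K+3$ derivatives on $g$ thanks to the smoothing action of $\Dein$; by Cauchy--Schwarz with the interpolation $\|g\|_{\dot H^{2K+3}} \le \|g\|_{L^2}^{1/(2K+4)}\|g\|_{\dot H^{2K+4}}^{(2K+3)/(2K+4)}$ and Young's inequality, it is bounded by $\epsilon\|h\|_{L^2}^2 + C_\epsilon\|g\|_{L^2}^2$.

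The commutator remainders require two types of treatment. Those producing at most $2K+3$ derivatives on $g$ (covering the multiplicative and nonlocal pieces in full) are handled by the same interpolation/Young argument, since Lemma \ref{lem: solve ODE} gives $Q \in H^\infty(\RR^3)$ with bounded derivatives of every order, hence smooth bounded $\vec{V}$ and its derivatives. The exceptional case is the advection commutator $[\Delta^{K+2}, \vec{V}\cdot\na] g$, whose principal part is a differential operator of order $2K+4$ on $g$ with coefficient involving $\na\vec{V}$; by the Calder\'on--Zygmund identity $\sum_{i,j}\|\partial_i\partial_j u\|_{L^2}^2 = \|\Delta u\|_{L^2}^2$, its $L^2$-norm is bounded by a constant $C_Q$ (independent of $K$) times $\|h\|_{L^2}$, so it contributes at most $C_Q\|h\|_{L^2}^2$. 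Summing everything yields
\[
(\calL g, g)_{\dot H^{2K+4}} \le \bigl( -1 - \beta(2K+\tfrac{5}{2}) + 2 + C_Q + \epsilon \bigr) \|h\|_{L^2}^2 + C_\epsilon\|g\|_{L^2}^2.
\]
Since $K = A/4$ with $A \gg 4j_0+3$ by Proposition \ref{prop: coer} and $\beta \ge 1/(3(1-\mu))$, enlarging $A$ if necessary forces the coefficient of $\|h\|_{L^2}^2$ below $-\tfrac{1}{8}$, establishing \eqref{coercivity in H^K}.

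The main obstacle will be the advection commutator $[\Delta^{K+2}, \vec{V}\cdot\na]g$: unlike all other commutator remainders, it does not drop any derivative relative to $h$ and therefore cannot be absorbed into a small $\epsilon\|h\|_{L^2}^2$ via interpolation. It must instead be controlled by an $\calO(1)$ constant $C_Q$ and then beaten by the scaling damping, which is possible only because $K$ (and hence $A$) is taken sufficiently large.
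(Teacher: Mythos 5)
Your overall decomposition and the scaling computation agree with the paper (the paper also gets $I=-(\tfrac{\beta}{2}(4K+5)+1)\|\Delta^{K+2}g\|_{L^2}^2$, treats the nonlocal term and all derivative-dropping remainders by Gagliardo--Nirenberg/Young, and picks up $+2$ from $2(1-\mu)Q(0)$). But there is a genuine gap exactly at the point you flag as the "main obstacle": the advection commutator $[\Delta^{K+2},\vec V\cdot\na]g$ with $\vec V=\na\Dein Q$ cannot be bounded by a constant $C_Q$ \emph{independent of $K$} times $\|h\|_{L^2}$. Its top-order part is, by Leibniz, $2(K+2)\,\partial_i V_j\,\partial_i\partial_j\Delta^{K+1}g$ (plus genuinely lower-order terms), so any $L^2$ bound necessarily carries the factor $2(K+2)$; your Calder\'on--Zygmund step controls $\|\partial_i\partial_j\Delta^{K+1}g\|_{L^2}$ by $\|h\|_{L^2}$ but does not remove this factor. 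Hence your final ledger $-1-\beta(2K+\tfrac52)+2+C_Q+\epsilon$ is not justified: the correct coefficient is of the form $-\beta(2K+\tfrac52)+2(K+2)\,\sup|\na\vec V|+O(1)$, in which the commutator contribution grows linearly in $K$ at a rate comparable to the scaling gain, so "take $A$ large" alone does not conclude.

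What closes the argument (and what the paper does) is a quantitative comparison between $\na\vec V$ and $\beta$. Writing $\vec V=rf_Q\,\mathbf e_r$, the leading commutator coefficient is $\partial_r(rf_Q)=Q-2f_Q$, and since $Q$ is radially decreasing one has $3f_Q\ge Q$, hence $\partial_r(rf_Q)\le f_Q\le f_Q(0)=\tfrac{1}{3(1-\mu)}$; the tangential entries are $f_Q\le f_Q(0)$ as well. Because $\beta=\tfrac{1}{3(1-\mu)}+\tfrac{1}{2j_0}>f_Q(0)$, the net top-order coefficient is $-2K\big(\beta-\tfrac{1}{3(1-\mu)}\big)+O(1)=-\tfrac{K}{j_0}+O(1)$, which is $\le -\tfrac18$ only because $K=\tfrac{A}{4}\gg j_0$. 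So the result is true, but your proof as written fails at the $K$-independence claim; you need the $K$-dependent commutator bound together with the pointwise inequality $\partial_r(rf_Q)\le f_Q(0)<\beta$ (equivalently the structural fact $Q\le 3f_Q$ from the monotonicity of $Q$), and the smallness margin is $\tfrac{1}{2j_0}$ per unit of $K$, not the full $2\beta$.
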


\begin{proof}
    By the definition of $\calL$ given in \eqref{linearized op},
    \be
    \left( \calL g,g \right)_{\dot H^{2K+4}} 
    = I + II + III + IV,
    \label{HK estimate: expression}
    \ee
    with
    \begin{align*}
        &  I: = -\left( \Delta^{K+2} \left( (g + \beta y \cdot \na  g)  \right),  \Delta^{K+2} g\right)_{L^2}, 
        \quad II:= \left( \Delta^{K+2} \left( \na g \cdot \na \Dein Q \right),  \Delta^{K+2} g\right)_{L^2}, \\
        & III:=\left( \Delta^{K+2} \left( \na Q \cdot \na \Dein g  \right),  \Delta^{K+2} g\right)_{L^2}, 
        \quad IV := \left( \Delta^{K+2} \left(  2(1-\mu) Q g \right),  \Delta^{K+2} g\right)_{L^2}.
    \end{align*}
    First, observe that by scaling invariance, the term $I$ can be computed as
    \begin{equation}\label{5.2.1}
    \begin{aligned}
    I &= 
    -\frac{1}{2}\frac{d}{d\lambda}\Bigg|_{\lambda =1} \Big\| \Delta^{K+2} \left( \lambda g(\lambda^\beta y) \right)\Big\|_{L^2}^2
    = -\left( \frac{\beta}{2} \left( 4K+5 \right)  +1 \right) \| \Delta^{K+2} g \|_{L^2}^2.
    \end{aligned}
    \end{equation}

    For the term $II$, recalling \eqref{nonlocal term: formula}, we rewrite $\na \Dein Q = rf_Q \mathbf{e}_r$ with $\mathbf{e}_r = \frac{y}{|y|}$. Then by Lemma \ref{lem: solve ODE}, \cite[Lemma A.$4$]{Javierblowup3DNS}, integration by parts, Gagliardo-Nirenberg inequality, and Young's inequality, there exists a uniform constant $C= C(K,Q)>0$ such that
    \begin{align}
        II
         \leq  &\left( \left( \na  \Delta^{K+2}g \cdot \na \Dein Q \right), \Delta^{K+2}g \right)_{L^2}
        + 2(K+2) \left( (\partial_r (rf_Q))  \Delta^{K+2}g , \Delta^{K+2}g \right)_{L^2} \notag \\
        &+C \big\| r f_Q \big\|_{W^{2K+4,\infty}} \| g \|_{H^{2K+3}} \| g \|_{\dot H^{2K+4}}   \notag \\
         \leq& 
          2(K+2) \left( (\partial_r(rf_Q) )  \Delta^{K+2}g , \Delta^{K+2}g \right)_{L^2} 
      + \frac{1}{100} \norm{g}_{\dot H^{K+2}}^2 + C \norm{g}_{L^2}^2.
      \label{Hk estimate: 1}
    \end{align}
   Since $Q(r)$ is radially decreasing by Lemma \ref{lem: solve ODE},
    \begin{align*}\label{3fQ>Q}
    f_Q(r) = \frac{1}{r^3} \int_0^r Q(s) s^2 ds \ge \frac{1}{r^3} Q(r) \int_0^r s^2 ds = \frac{1}{3} Q(r),
    \end{align*}
  which yields that
    \[
    \partial_r (r f_Q(r)) = Q(r) -2f_Q(r) \le 3f_Q(r) -2 f_Q(r) = f_Q(r), \quad \forall \; r >0.
    \]
    Hence, by the decreasing fact of $f_Q$ given by Lemma \ref{lem: solve ODE}, \eqref{Hk estimate: 1} can be improved to
    \begin{align}
        II & \le 
       \left(  2(K+2) f_Q(0) + \frac{1}{100} \right) \| g \|_{\dot H^{2K+4}}^2 + C \| g \|_{L^2}^2.
        \label{5.2.2}
    \end{align}

 For the term $III$, following the argument in \eqref{linear estimate: nonlocal 1}, we first compute
 \begin{align*}
 & \quad  \int \left( \na \Delta^{K+2} Q \cdot \na \Dein g \right) \Delta^{K+2} g dy
   = \int \big| \partial_r \Delta^{K+2}Q \big| \left( \frac{1}{r^2} \int_0^r |g(s)| s^2 ds \right) \big| \Delta^{K+2} g \big| dy \\
  & \le \left( \int |\partial_r \Delta^{K+2} Q| |y|^{-\frac{1}{2}} |\Delta^{K+2} g| dy \right) \| g \|_{L^2} \le \Big\| |y|^{-\frac{1}{2}} \partial_r \Delta^{K+2} Q \Big\|_{L^2} \| g \|_{L^2} \| g \|_{\dot H^{2K+4}},
 \end{align*}
 which, by Lemma \ref{lem: solve ODE}, Gagliardo-Nirenberg inequality, and Young's inequality, yields that there exists $C=C(K,Q)>0$ such that
\begin{align}
    III
    & \le \int \left( \na \Delta^{K+2} Q \cdot \na \Dein g \right) \Delta^{K+2} g dy
    + C \left( \sum_{j=1}^{2K+4} \| \na^{2K+5-j} Q \|_{L^{\infty}} \| \na^{j+1} \Dein g \|_{L^2} \right) \| g \|_{\dot H^{2K+4}} \notag\\
    & \le C \| g \|_{H^{2K+3}} \| g \|_{\dot H^{2K+4}} 
    \le \frac{1}{100} \| g \|_{\dot H^{2K+4}}^2 + C \| g \|_{L^2}^2.
      \label{5.2.3}
\end{align}

Finally, for $IV$, by Lemma \ref{ODE system}, Gagliardo-Nirenberg inequality and Young's inequality again,
\begin{align}
    IV
    & \le  2(1-\mu) \int Q |\Delta^{K+2} g|^2 dy + C\sum_{j=0}^{2K+3} \| \na^{2K+4-j} Q \|_{L^\infty} \| \na^j g \|_{L^2} \| \Delta^{K+2} g \|_{L^2} \notag \\
    & \le  \left( 2(1-\mu) Q(0) + \frac{1}{100} \right) \| g \|_{\dot H^{2K+4}}^2 + C  \|g \|_{H^{2K+3}}^2. 
     \label{5.2.4}
\end{align}

    Consequently, combining \eqref{5.2.1}, \eqref{5.2.2}, \eqref{5.2.3}, \eqref{5.2.4}, along with the choice of $\beta < \frac{1}{2}$ from \eqref{beta: def}, Lemma \ref{lem: solve ODE}, the selection of $A$ from \eqref{A: choice range} and $K := \frac{A}{4}$, one finds that
    \begin{align*}
        & \quad \left( \calL g,g \right)_{\dot H^{2K+4}}
         \le  \Big( -\frac{\beta}{2} \left( 4K+5 \right)  +\frac{2K+4}{3(1-\mu)}+1+\frac{3}{100}\Big) \| g \|_{\dot H^{2K+4}}^2 + C\norm{g}_{L^2}^2 \\
        & =\Big(-\frac{A+8}{4j_0} + \frac{3\beta}{2}    +1+ \frac{3}{100} \Big)\| g \|_{\dot H^{2K+4}}^2 + C\norm{g}_{L^2}^2
         \le - \frac{1}{8} \| g \|_{\dot H^{2K+4}}^2 + C \| g \|_{L^2}^2,
    \end{align*}
    and thus, we have completed the proof of \eqref{coercivity in H^K}.
\end{proof}

\section{Nonlinear stability}
\label{sec: nonlinear sta}
In this section, building upon the coercivity properties of the operator $\calL$ established in Section \ref{sec: linear theory}, we proceed to analyze the nonlinear stability of $Q$. Then, based on this nonlinear stability, we will conclude this section by constructing a finite-time blowup solution to  \eqref{equation: KS with damping} for any $\mu \in \left[ 0,\frac{1}{3} \right)$ with finite mass and nonnegative density.

\subsection{Ansatz and modulation}\label{subsec:mod} 
Recalling the ansatz $\Psi = Q+ \ep$, where $\ep$ is \textit{radially symmetric} and solves the equation \eqref{linearized eq},  we further decompose $\ep$ as motivated by Proposition \ref{prop: coer}:
\be
\ep(\tau,y) = \epu(\tau,y) + \eps(\tau,y),
\text{ with } \epu(\tau,y) = \sum_{j=0}^K c_j(\tau) \chi(y) |y|^{2j},
\label{decomposition of Psi}
\ee
where $\chi$ is a radial smooth cut-off function on $B(0,1)$ defined in \eqref{def: cut-off function}, and $K = \frac{A}{4} \in \ZZ$ is as in Proposition \ref{prop: core H^K}. The coefficients $\{c_j \}_{j=0}^K$ are chosen such that the component $\eps$ satisfies the vanishing condition at the origin
\be
\partial_r^{2j} \ep_s (\tau,0) =0, \quad \; \forall \; 0 \le j \le K.
\label{stable part: modulation condition}
\ee
In particular, we split $\epu$ as
\be
\ep_u(\tau) = \ep_{u,real} + \ep_{u,fake} \;,
\label{Psi: decomposition}
\ee
where
\begin{equation}\label{epu: real & fake}
    \begin{cases}
        \ep_{u,real}(\tau,y) := \sum_{j=0}^{j_0} c_j(\tau) \chi(y) |y|^{2j},\\
    \ep_{u,fake}(\tau, y) := \sum_{j=j_0+1}^K c_{j} \chi(y) |y|^{2j}.
    \end{cases}
\end{equation}

\begin{remark}
   Recalling that the singular weight introduced in Proposition \ref{prop: coer} might not be sharp, the decomposition \eqref{decomposition of Psi} allows for stable components to be included in $\epu$. One can distinguish the unstable and stable parts as in \eqref{epu: real & fake}. For further intuition regarding decomposition \eqref{epu: real & fake}, we refer the readers to the modulation ODE system \eqref{ODE system for unstable part}.
\end{remark}

\subsubsection{Coefficients of the Taylor expansion of $\calL \epu$} With the vanishing condition \eqref{stable part: modulation condition}, one can derive the modulation equations for $\{c_j \}_{j=0}^K$. 
A crucial step in this process is to compute the Taylor expansion at the origin of each term appearing in \eqref{linearized eq}. As a preliminary, we focus in this section on determining the Taylor coefficients of $\calL \epu$ at the origin.

Specifically, recalling the definition of  $\calL$ from \eqref{linearized op}, and Taylor expansions of $(Q,f_Q)$ at the origin given in \eqref{(Q,f), expansion}, we obtain for any $r \in (0,\epsilon]$,
\begin{align*}
    & \quad \calL \epu 
     = - (\epu + \beta r \partial_r \epu) + f_Q \left(r \partial_r \epu \right) + r\partial_r Q \left( \frac{1}{r^3} \int_0^r \epu (s) s^2 ds \right) + 2(1-\mu) Q \epu \\
    & = -\sum_{j=0}^K c_j r^{2j}  - \beta  \sum_{j=0}^K 2j c_j r^{2j} + \left(\sum_{j=0}^\infty [f_Q]_{j} r^{2j} \right) \left( \sum_{j=0}^K 2j c_j r^{2j} \right) \\
  & \quad + \left(\sum_{j=0}^\infty 2j [Q]_{j} r^{2j} \right) \left( \sum_{j=0}^K  \frac{c_j}{2j+3} r^{2j} \right)  + 2(1-\mu) \left( \sum_{j=0}^\infty [Q]_{j}  r^{2j}\right) \left( \sum_{j=0}^K c_j r^{2j} \right).
\end{align*}
Applying \eqref{recurrence relation 1}, \eqref{recurrence Qj} and $[Q]_{j_0}=-1$, this yields that
\[
[\calL \epu]_j = -c_j - 2 \beta j c_j + 2j[f_Q]_0 c_j + 2(1-\mu) [Q]_0c_j 
= \frac{j_0 -j}{j_0} c_j, \qquad \forall \; 0 \le j < j_0,
\]
and
\begin{align*}
[\calL \ep]_{j_0} 
& = -c_{j_0} - 2 \beta j_0 c_{j_0} + 2j_0 [f_Q]_0 c_{j_0} + 2j_0 [Q]_{j_0} \frac{c_0}{3} + 2(1-\mu) [Q]_0 c_{j_0} + 2(1-\mu) [Q]_{j_0} c_0 \\
& 
=\sigma_{0,j_0} c_0, \quad \text{ where } \quad \sigma_{0,j_0} =  - \left( \frac{2}{3} j_0 + 2(1-\mu) \right).
\end{align*}
Similarly, for $j_0 +1 \le j \le K$, there exists $\{ \sigma_{i,j} \} =\{ \sigma_{i,j} (\mu,j_0) \} \subset \RR$ such that
\begin{align*}
    [\calL \ep]_{j}
    = \frac{j_0-j}{j_0} c_j + \sum_{i=0}^{j-1} \sigma_{i,j} c_i, \qquad \forall \; j_0+1 \le  j \le K.
\end{align*}
In summary, $[\calL \epu]_j$, the $2j$-th order coefficient of the Taylor expansion of $\calL \epu$ at the origin, satisfies
\be
[\calL \epu]_j
=
\begin{cases}
    \frac{j_0-j}{j_0} c_j, & 0 \le j < j_0, \\
    \frac{j_0-j}{j_0} c_j+ \sigma_{0,j_0} c_0, & j=j_0, \\
     \frac{j_0-j}{j_0} c_j + \sum_{i=0}^{j-1} \sigma_{i,j} c_i, & j_0+1 \le j \le K.
\end{cases}
\label{L epu: Taylor coefficients}
\ee

\subsubsection{Modulation equations.} Next, we derive the modulation equations for $\{c_j\}_{j=0}^K$ under the restriction given in \eqref{stable part: modulation condition}, building on the previous preliminary results. Concretely, with the decomposition of $\ep$ in \eqref{decomposition of Psi} and the equation \eqref{linearized eq},
\begin{align}
    \partial_\tau \eps
     & =  \calL \eps + N(\eps)  
     + \na \epu \cdot \na \Dein \eps + \na \eps \cdot \na \Dein \epu + 2(1-\mu) \epu \eps + G[\lambda, \Psi, \epu],
     \label{eq: eps}
\end{align}
where the \textit{modulation term} $G[\lambda,\Psi,\epu]$ is defined as
\be
G[\lambda, \Psi, \epu] := \lambda^{2-4\beta} \Delta \Psi - \sum_{j=0}^K \dot c_j \chi(r) r^{2j} + \calL \epu + N(\epu).
\label{G: def}
\ee
Since $\ep_s = O(r^{2K+2})$ at the origin as indicated in \eqref{stable part: modulation condition}, and by the definition of $\calL$ given in \eqref{linearized op}, $\calL \eps = O(r^{2K+2})$ at the origin, the following holds
\[
\partial_\tau \eps - \left( \calL \eps + N(\eps) + \na \epu \cdot \na \Dein \eps + \na \eps \cdot \na \Dein \epu + 2(1-\mu) \epu \eps \right) = O(r^{2K+2}),
\]
which requires the modulation term $G[\lambda,\Psi, \epu] = O(r^{2K+2})$ at the origin. Noting that $G[\lambda, \Psi, \epu]$ can be expanded as follows:
\begin{align}
    & \quad G[\lambda, \Psi ,\epu]
      = \lambda^{2-4 \beta} \Delta \Psi - \sum_{j=0}^K \dot c_j \chi r^{2j} + \calL \epu + N(\epu) \notag\\
    & = \lambda^{2-4\beta} \sum_{j=0}^K [\Delta \Psi ]_j \chi r^{2j} 
    - \sum_{j=0}^K \dot c_j \chi r^{2j} 
    + \sum_{j=0}^K [\calL \epu]_j \chi r^{2j} + \sum_{j=0}^K [N(\epu)]_j \chi  r^{2j}
    \label{G: lower term}
    \\
    & \quad + \lambda^{2-4\beta} \left( \left(\Delta Q + \epu \right) -\sum_{j=0}^K [\Delta Q + \Delta \epu ]_j \chi r^{2j} \right)
    + \lambda^{2-4\beta} \left( \Delta \eps  - [\Delta \eps]_K \chi r^{2K} \right) \notag \\
    & \quad 
    + \left( \calL \epu - \sum_{j=0}^K [\calL \epu]_j \chi r^{2j} \right) +N(\epu) -  \sum_{j=0}^K [N(\epu)]_j \chi  r^{2j} \notag,
\end{align}
where \eqref{G: lower term} has fewer orders of vanishing at the origin. Hence \eqref{G: lower term} should be imposed to vanish at the origin, then together with \eqref{L epu: Taylor coefficients}, this yields the following modulation equations onto the coefficients $\{c_j\}_{j=0}^K$:
\be
\begin{cases}
    \dot c_j =\frac{j_0 -j}{j_0} c_j + \lambda^{2-4 \beta} [\Delta \Psi]_j +  [N(\epu)]_{j}, & 0 \le j < j_0, \\
    \dot c_{j_0} =  \sigma_{0,j_0}c_0 +\lambda^{2-4 \beta} [\Delta \Psi]_{j_0}  +
    [N(\epu)]_j
    , & j =j_0, \\
    \dot c_{j} =  \frac{j_0 -j}{j_0} c_j + \sum_{i=0}^{j-1} \sigma_{i,j} c_i + \lambda^{2-4 \beta} [\Delta \Psi]_{j}  + [N(\epu)]_j, & j_0 < j \le K.
\end{cases}
\label{ODE system for unstable part}
\ee
In conclusion, we obtain an ODE-PDE system in terms of $\left(\{ c_j \}_{j=0}^K, \eps\right)$, which couples equations \eqref{eq: eps} and \eqref{ODE system for unstable part}. 
Moreover, the modulation term $G[\lambda, \Psi ,\epu]$ in \eqref{eq: eps} can be further expressed as follows:
\begin{align}
     G[\lambda, \Psi ,\epu]
    & =  \lambda^{2-4\beta} \left( \left(\Delta Q + \epu \right) -\sum_{j=0}^K [\Delta Q + \Delta \epu ]_j \chi r^{2j} \right) \notag
    + \lambda^{2-4\beta} \left( \Delta \eps  - [\Delta \eps]_K \chi r^{2K} \right) \\
    & \quad 
    + \left( \calL \epu - \sum_{j=0}^K [\calL \epu]_j \chi r^{2j} \right) +N(\epu) - \sum_{j=0}^K [N(\epu)]_j \chi r^{2j}.
    \label{G: new expression}
\end{align}

\subsection{Bootstrap argument.}
 We take $\delta_g \ll 1$ satisfying
    \be
    \delta_g \ll \min \Big\{ \frac{1}{8}, 2-4\beta, \frac{1}{j_0},  \frac{1}{|\sigma_{0,j_0}|}, A^{-1} , B  \Big\},
    \label{delta g: restriction}
    \ee
and introduce the function
\be
d_{real}(\tau)
:= \sqrt{ \frac{1}{2}\sum_{j=1}^{j_0-1} c_j^2 + \frac{10}{\delta_g^2} c_0^2 + \frac{1}{4 |\sigma_{0,j_0}|} c_{j_0}^2 },
\label{bootrap real unstable: norm}
\ee
which describes the behavior of $\ep_{u,real}$. The main result is as follows:
\begin{proposition}[Bootstrap]
\label{prop: bootstrap}
   There exists $0 < \delta_g \ll 1$ satisfying \eqref{delta g: restriction} such that there exists $(\delta_0,\delta_1, \delta_2, \delta_3,  \delta_{j_0+1}, \delta_{j_0+2} ..., \delta_K)$ with
    \be
    \delta_0  \ll \delta_3 \ll \delta_{j_0+1} \ll \delta_{j_0+2} \ll \delta_{j_0+3} \ll ... \ll \delta_{K} \ll \delta_1 \ll \delta_2 \ll \delta_3^\frac{1}{2} \ll \delta_g \ll Q(2),
    \label{delta i: restriction}
    \ee
    such that for any $\lambda(0) = \lambda_0$, $\ep_{s}(0) \in L_{w}^2 \cap \dot H_{rad}^{2K+6} (\RR^3)$ and $\{ c_j(0) \}_{j=j_0+1}^K$  \footnote{With this regularity assumption on initial data, together with \eqref{ss coordinte} and \eqref{ss renormalization}, by the standard fixed point arguments as in \cite{zbMATH01532872} or \cite[Appendix A]{ksnsblowup}, the local wellposedness can be ensured and the related solution to \eqref{equation: KS with damping} satisfying $\rho \in C_t^0 \left( [0,T_{max}), H_x^{2K+6}(\RR^3)  \right) \cap C_t^1\left([0,T_{max}), H_x^{2K+4}(\RR^3) \right)$.} satisfying
    \be
    |\lambda_0| + |\lambda_0|^{2-4\beta} + 
    \| \ep_{s}(0) \|_{ L_{w}^2 (\RR^3)} +  
    \| \ep_{s}(0) \|_{\dot H^{2K+4}(\RR^3)} + \sum_{j=j_0+1}^K |c_j(0)| \le \delta_0,
    \label{bootstrap: initial condition}
    \ee
    there exists $\{c_j(0) \}_{j=0}^{j_0}$ such that the radial solution to \eqref{renormalization KS} with initial datum
    \[
    (\Psi_0, \lambda_0) = \left (Q+ \sum_{j=0}^{j_0} c_j(0) \chi |y|^{2j} + \sum_{j=j_0+1}^K c_j(0) \chi |y|^{2j} + \ep_{s0}, \lambda_0 \right),
    \]
    globally exists and can be split into \eqref{decomposition of Psi} satisfying the following for all $\tau \ge 0$:

    \noindent $\bullet$ (Control the $L^2$ base norm of stable part $\eps$)
    \be 
    \| \ep_{s}(\tau) \|_{L_{w}^2} \le \delta_1 e^{- \frac{1}{2} \delta_g \tau}.
    \label{bootstrap assum: L2 stable}
    \ee

    \noindent $\bullet$ (Control the higher regularity of stable part $\eps$ via $\dot H^{2K+4}$ norm)
    \be
    \| \eps(\tau) \|_{\dot H^{2K+4}} \le \delta_2 e^{-\frac{1}{2} \delta_g \tau}.
    \label{bootstrap assum: Hm stable}
    \ee

    \noindent $\bullet$ (Control the real unstable part $\ep_{u,real}$)
    \be
    d_{real}(\tau) \le \delta_3 e^{-\frac{7}{10} \delta_g \tau}.
    \label{bootstrap assum: real unstable}
    \ee
    
    \noindent $\bullet$ (Control the fake unstable part $\ep_{u,fake}$)
    \be
    |c_j(\tau)| \le \delta_j e^{-\frac{7}{10} \delta_g \tau}, \quad \forall \; j_0+1 \le j \le K.
    \label{bootstrap assum: fake unstable}
    \ee
\end{proposition}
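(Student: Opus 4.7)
\medskip

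\textbf{Proof strategy for Proposition~\ref{prop: bootstrap}.}

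The plan is to run a continuity argument under all four bootstrap hypotheses \eqref{bootstrap assum: L2 stable}--\eqref{bootstrap assum: fake unstable} simultaneously, strictly improving the first three, and then use a topological (Brouwer) selection argument over the $j_0+1$ initial conditions $\{c_j(0)\}_{j=0}^{j_0}$ to recover \eqref{bootstrap assum: real unstable}. The local existence of the radial solution in the regularity class indicated in the footnote is standard and reduces the task to obtaining a-priori estimates under the bootstrap assumptions.

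\emph{Step 1 (Stable part in $L_w^2$).} Pair equation \eqref{eq: eps} with $\eps w$ and apply Proposition~\ref{prop: coer} to the leading linear term $\calL\eps$. The coupling terms $\na\epu\cdot\na\Dein\eps + \na\eps\cdot\na\Dein\epu + 2(1-\mu)\epu\eps$ are handled using the fact that $\epu$ is supported in $B(0,2)$ with $\|\epu\|_{L^\infty}$ controlled by $d_{real}+\sum_{j>j_0}|c_j|$, combined with a Hardy-type bound for $\na\Dein\eps$ in $L_w^2$ using the singular factor $|y|^{-A}$. The quadratic $N(\eps)$ is controlled via Gagliardo--Nirenberg interpolation between $\|\eps\|_{L_w^2}$ (which controls $\|\eps\|_{L^2}$) and $\|\eps\|_{\dot H^{2K+4}}$. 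For the modulation term $G$ in \eqref{G: new expression}, the diffusion piece $\lambda^{2-4\beta}\Delta\Psi$ is small thanks to $\lambda_0\ll 1$ and the exponential decay \eqref{lambda: form} combined with $2-4\beta>0$; the remaining pieces vanish to order $r^{2K+2}$ at the origin, hence are bounded in $L_w^2$ by a constant times $(d_{real}+\sum_{j>j_0}|c_j|)^2$ plus $\lambda^{2-4\beta}$. This gives
\[
\tfrac{1}{2}\tfrac{d}{d\tau}\|\eps\|_{L_w^2}^2 \leq -\tfrac{1}{9}\|\eps\|_{L_w^2}^2 + C\delta_3^2 e^{-7\delta_g\tau/5} + C\lambda_0^{2-4\beta} e^{-(2-4\beta)\tau},
\]
which, under the hierarchy \eqref{delta i: restriction} with $\delta_3^2\ll \delta_1^2\delta_g$ and $\delta_0^{2-4\beta}\ll\delta_1^2$, strictly improves \eqref{bootstrap assum: L2 stable} to $\|\eps\|_{L_w^2}\leq \tfrac{1}{2}\delta_1 e^{-\delta_g\tau/2}$.

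\emph{Step 2 (Higher regularity).} Apply $\Delta^{K+2}$ to \eqref{eq: eps}, pair with $\Delta^{K+2}\eps$, and invoke Proposition~\ref{prop: core H^K}. The commutators with the nonlocal and coupling terms produce at most $\|\eps\|_{H^{2K+3}}$, which by interpolation is bounded by $\|\eps\|_{L^2}^{\theta}\|\eps\|_{\dot H^{2K+4}}^{1-\theta}$ and absorbed using Young's inequality into the coercive term. Combining with Step~1 through an energy functional $\|\eps\|_{L_w^2}^2+\kappa\|\eps\|_{\dot H^{2K+4}}^2$ with sufficiently small $\kappa$ absorbs the $C\|\eps\|_{L^2}^2$ slack in \eqref{coercivity in H^K}, and yields the strict improvement of \eqref{bootstrap assum: Hm stable}.

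\emph{Step 3 (Fake unstable modes).} For $j_0+1\leq j\leq K$ the linear coefficient $(j_0-j)/j_0\leq -1/j_0$ in \eqref{ODE system for unstable part} is strictly negative and much larger in absolute value than $7\delta_g/10$ by \eqref{delta g: restriction}. Estimating the forcing $\sum_{i<j}\sigma_{i,j}c_i+\lambda^{2-4\beta}[\Delta\Psi]_j+[N(\epu)]_j$ by $Cd_{real}+C\sum_{i=j_0+1}^{j-1}|c_i|+C\lambda_0^{2-4\beta}e^{-(2-4\beta)\tau}$, an induction on $j$ from $j_0+1$ up to $K$ using the hierarchy $\delta_3\ll\delta_{j_0+1}\ll\cdots\ll\delta_K$ gives $|c_j(\tau)|\leq\tfrac{1}{2}\delta_j e^{-7\delta_g\tau/10}$, strictly improving \eqref{bootstrap assum: fake unstable}.

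\emph{Step 4 (Topological selection for real unstable modes).} This is the main obstacle. In the linearized dynamics of $(c_0,\dots,c_{j_0})$, all of $c_0,\dots,c_{j_0-1}$ have strictly positive eigenvalues $(j_0-j)/j_0\in[1/j_0,1]$, while $c_{j_0}$ is neutral but forced by $\sigma_{0,j_0}c_0$. The weights in \eqref{bootrap real unstable: norm} are calibrated so that, on the exit locus $\{d_{real}=\delta_3 e^{-7\delta_g\tau/10}\}$,
\[
\tfrac{1}{2}\tfrac{d}{d\tau}d_{real}^2 \geq \tfrac{1}{2j_0}\sum_{j=1}^{j_0-1}c_j^2 + \tfrac{10}{\delta_g^2}c_0^2 - \tfrac{1}{4|\sigma_{0,j_0}|}\cdot 2|\sigma_{0,j_0}||c_0||c_{j_0}| - \text{forcing},
\]
and the large prefactor $10/\delta_g^2$ on $c_0^2$ dominates the cross term $|c_0||c_{j_0}|$ via Cauchy--Schwarz, yielding $\tfrac{d}{d\tau}d_{real}^2\geq (1/j_0-\delta_g)d_{real}^2$ once forcing is estimated by $C\delta_3^{3/2}$ using Steps~1--3 and \eqref{delta i: restriction}. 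This strict transversality at exit, together with continuity in initial data, precludes the existence of a continuous retraction from the closed ball $\{d_{real}(0)\leq\delta_3\}\subset\RR^{j_0+1}$ onto its boundary sphere. By Brouwer's theorem there exists at least one initial condition in the interior for which the exit time is $+\infty$, giving \eqref{bootstrap assum: real unstable} globally and completing the proof.

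The principal difficulty is Step~4: one must verify strict transversality despite the neutral--unstable coupling between $c_0$ and $c_{j_0}$, and one must check that every forcing term (contributions from $\eps$, from $\epu\cdot\eps$ interaction, from $\lambda^{2-4\beta}[\Delta\Psi]_j$, and from $[N(\epu)]_j$) is genuinely $o(d_{real}\delta_g)$ under the hierarchy \eqref{delta i: restriction}. The hierarchy is engineered exactly for this: forcing is $O(\delta_1+\delta_3^2+\delta_0^{2-4\beta})\ll \delta_3\delta_g$, while the unstable linear drift is at least $\delta_3/j_0$, giving the required transversality margin.
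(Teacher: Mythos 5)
Your overall architecture coincides with the paper's: improve the $L_w^2$, $\dot H^{2K+4}$ and fake-unstable bounds under the full set of bootstrap assumptions (your Steps 1--3 are essentially Lemmas \ref{lemma: L2 estimate of eps}, \ref{lemma: Hm estimate of eps}, \ref{lemma: estimate of unstable fake}; using a combined energy $\|\eps\|_{L_w^2}^2+\kappa\|\eps\|_{\dot H^{2K+4}}^2$ instead of the paper's sequential Gronwall is a harmless variant, and your claim that the non-diffusive part of $G$ is quadratic in $(d_{real}+\sum_{j>j_0}|c_j|)$ is inaccurate---the piece $\calL\epu$ minus its Taylor truncation is only linear in the $c_j$'s, cf.\ Lemma \ref{estimate of G}---but a linear bound suffices after Young), and then run an exit-time/Brouwer argument in the $(c_0,\dots,c_{j_0})$ variables.

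The genuine gap is in Step 4, in the claimed differential inequality $\frac{d}{d\tau}d_{real}^2\ge(\tfrac{1}{j_0}-\dg)\,d_{real}^2$ on the exit locus. The mode $c_{j_0}$ is neutral: by \eqref{ODE system for unstable part} its equation is $\dot c_{j_0}=\sigma_{0,j_0}c_0+\text{(forcing)}$, with no linear term in $c_{j_0}$ itself, so after absorbing the cross term $c_0c_{j_0}$ with the $\tfrac{10}{\dg^2}c_0^2$ weight the lower bound for $\frac{d}{d\tau}d_{real}^2$ contains \emph{no} positive multiple of $c_{j_0}^2$---only a small negative one of size $O(\dg^2)c_{j_0}^2$. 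Hence at an exit point where the mass of $d_{real}$ sits in the $c_{j_0}$ direction (e.g.\ $c_0=\dots=c_{j_0-1}=0$), one has $\frac{d}{d\tau}d_{real}^2=O(\text{forcing})$, and your inequality, which would force growth of size $\delta_3^2/j_0$, is false; transversality as you state it does not follow. The paper's proof handles exactly this point differently: it only establishes the weak lower bound $\frac{d}{d\tau}d_{real}^2\ge -C\lambda^{4-8\beta}-\tfrac{\dg^2|\sigma_{0,j_0}|}{40}d_{real}^2-Cd_{real}^4-C\sum_{j>j_0}|c_j|^4$ and obtains the strictly outgoing sign for the \emph{threshold-normalized} quantity $e^{\frac{7}{5}\dg\tau}d_{real}^2$ in \eqref{outergoing-flux property}, where the positive margin $\tfrac{7}{5}\dg\,\delta_3^2$ is supplied by the decay of the exit threshold itself (this is precisely why the unstable modes are only required to decay at rate $\tfrac{7}{10}\dg$, slower than the $\tfrac12\dg$ rate of the stable part). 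Your ingredients (the $\tfrac{10}{\dg^2}$ weight, the exit locus $d_{real}=\delta_3e^{-\frac{7}{10}\dg\tau}$) are the right ones, but the transversality must be formulated and proved for $e^{\frac{7}{5}\dg\tau}d_{real}^2$ rather than through growth of $d_{real}^2$ itself; as written, the neutral--unstable coupling is not actually overcome.
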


\vspace{0.5mm}

\begin{remark}
In what follows, the estimates \eqref{delta: choice 1}, \eqref{delta: choice 2}, \eqref{delta: choice 3}, and \eqref{outergoing-flux property} yield the requirements on $\{\delta_i\}$ summarized in \eqref{delta i: restriction}. 
Notably, the constants $C>0$ appearing below, which  might vary from line to line if necessary,  are all independent of $\{\delta_i\}$. Therefore,  we can choose appropriate $\{\delta_i\}$  in a manner to close Proposition \ref{prop: bootstrap}.
\end{remark}

\begin{remark}
Proposition \ref{prop: bootstrap} is the center of the paper. As in \cite{MR3986939,MerleblowupNLSdefocusing}, Proposition \ref{prop: bootstrap} will be proven via contradiction using the topological argument as follows: given $\left(\lambda_0, \ep_{u,fake}(0), \eps(0) \right)$ satisfying \eqref{bootstrap: initial condition}, we assume that for any $\ep_{u,real}(\tau,y) = \sum_{j=0}^{j_0} c_j(\tau) \chi(y) |y|^{2j}$ satisfying \eqref{bootstrap assum: real unstable} when $\tau=0$, the exit time
\be
\tau^* = \sup \; \Big\{ \tau \ge 0: \big(d_{real}, \{ c_j \}_{j={j_0+1}}^N, \eps \big) \text{ satisfy  \eqref{bootstrap assum: L2 stable}-\eqref{bootstrap assum: fake unstable} simultaneously on } [0,\tau] \Big\}
\label{exit time}
\ee
is finite. We then look for a contradiction under the assumptions of Proposition \ref{prop: bootstrap}. Subsequently, we study the flow satisfying \eqref{bootstrap assum: L2 stable}-\eqref{bootstrap assum: fake unstable} on $[0,\tau^*]$. Specifically, we will show that the bounds in \eqref{bootstrap assum: L2 stable}, \eqref{bootstrap assum: Hm stable}, and \eqref{bootstrap assum: fake unstable} can be further improved within the bootstrap regime. This implies that the only possible scenario for the solution to exit the bootstrap regime is for the real unstable condition \eqref{bootstrap assum: real unstable} to fail as $\tau> \tau^*$. Moreover, using the outer-going flux property of $d_{real}$ at the exit time $\tau = \tau^*$, we conclude a contradiction via Brouwer's fixed point theorem.
\end{remark}

\subsection{A priori estimates}

\subsubsection{Preliminaries}

In the section, as preliminaries, we are devoted to the estimate of $[\Delta \Psi]_j$ with $0 \le j \le K$, which is the coefficient of Taylor expansion of $\Delta \Psi$ with the $2j$-th order at the origin. And the main result is as follows:
\begin{lemma}
\label{lemma: preliminaries, Psi}
    Under the bootstrap assumption of Proposition \ref{prop: bootstrap}, as for the function $\Psi$ defined in \eqref{ss renormalization}, there exists a uniform constant $C=C(K)>0$ such that the quantity $[\Delta\Psi]_j:= \frac{1}{(2j)!} \partial_r^{2j} \Delta \Psi (0)$ satisfies
    \be
    \big| [\Delta\Psi]_j \big| \le
    \begin{cases}
        C(K), & 0 \le j < K, \\
        C(K) + C(K) \| \eps \|_{H^{2K+4}}, & j=K.
    \end{cases}
    \label{Delta Psi: estimate on Taylor coeff}
    \ee
\end{lemma}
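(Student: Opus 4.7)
The plan is to split $\Psi = Q + \ep_u + \ep_s$ as in \eqref{decomposition of Psi} so that $\Delta\Psi = \Delta Q + \Delta\ep_u + \Delta\ep_s$, and then control $[\Delta\Psi]_j$ term by term. The uniform bound $C(K)$ for $0 \le j < K$ will come from the smoothness of $Q$ together with the explicit polynomial form of $\ep_u$ and the bootstrap bounds on $\{c_j\}$, whereas the contribution of $\ep_s$ to these low-order coefficients vanishes identically by the modulation vanishing condition \eqref{stable part: modulation condition}. Only at the top order $j=K$ will $\ep_s$ enter the estimate, and this is exactly where the Sobolev norm $\|\ep_s\|_{H^{2K+4}}$ appears.

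For the first piece, since $Q \in H^\infty(\RR^3)$ is smooth and radial by Lemma \ref{lem: solve ODE}, the Taylor coefficients $[\Delta Q]_j$ with $0 \le j \le K$ are all bounded by a constant depending only on $Q$ and $K$. For the second piece, on $B(0,1)$ we have $\chi \equiv 1$, so $\ep_u(\tau,y) = \sum_{j=0}^K c_j(\tau)|y|^{2j}$ there. Using the 3D radial identity $\Delta(|y|^{2j}) = 2j(2j+1)|y|^{2j-2}$, one obtains the closed form
\[
[\Delta\ep_u]_j = (2j+2)(2j+3)\,c_{j+1}(\tau) \quad (0 \le j \le K-1), \qquad [\Delta\ep_u]_K = 0.
\]
The bootstrap assumptions \eqref{bootstrap assum: real unstable}--\eqref{bootstrap assum: fake unstable}, together with the definition \eqref{bootrap real unstable: norm} of $d_{real}$, provide $|c_j(\tau)| \le C(K)$ for every $0 \le j \le K$, giving $|[\Delta\ep_u]_j| \le C(K)$.

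The third piece is the one where the vanishing condition and regularity are used. By \eqref{stable part: modulation condition}, the radial function $\ep_s(\tau,r)$ admits a Taylor expansion $\ep_s = \sum_{j \ge K+1}[\ep_s]_j\,r^{2j}$ near $r=0$, so
\[
\Delta\ep_s = \sum_{k \ge K} 2(k+1)(2k+3)\,[\ep_s]_{k+1}\,r^{2k},
\]
which yields $[\Delta\ep_s]_j = 0$ for $0 \le j \le K-1$ and $[\Delta\ep_s]_K = 2(K+1)(2K+3)\,[\ep_s]_{K+1}$. Since $[\ep_s]_{K+1} = \partial_r^{(2K+2)}\ep_s(0)/(2K+2)!$, it remains to bound pointwise a derivative of $\ep_s$ of order $2K+2$ at the origin. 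In $\RR^3$ the Sobolev embedding $H^{2K+4} \hookrightarrow C^{2K+2}$, which holds because $2K+4 > (2K+2) + 3/2$, yields $|[\Delta\ep_s]_K| \le C(K)\|\ep_s\|_{H^{2K+4}}$.

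Summing the three contributions proves \eqref{Delta Psi: estimate on Taylor coeff}. The main (and essentially only) subtle point is the top-order case $j=K$: one must verify that the vanishing condition \eqref{stable part: modulation condition} eliminates every would-be $1/r$ obstruction in applying the radial Laplacian near the origin, so that the surviving top derivative of $\ep_s$ is of order \emph{exactly} $2K+2$, which sits at the threshold permitted by the available regularity $H^{2K+4}$ and lets Sobolev embedding be applied without any loss.
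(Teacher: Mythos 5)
Your proposal is correct and follows essentially the same route as the paper: decompose $\Delta\Psi = \Delta Q + \Delta\ep_u + \Delta\ep_s$, bound the first two pieces by smoothness of $Q$ and the bootstrap bounds on $\{c_j\}$, use the vanishing condition \eqref{stable part: modulation condition} to kill $[\Delta\ep_s]_j$ for $j<K$, and control $[\Delta\ep_s]_K$ via the Sobolev embedding $H^{2K+4}(\RR^3)\hookrightarrow C^{2K+2}$. Your explicit computation of $[\Delta\ep_u]_j=(2j+2)(2j+3)c_{j+1}$ is a harmless refinement of the paper's crude $C(K)$ bound, and your infinite even-power expansion of $\ep_s$ should more carefully be stated as a finite Taylor expansion with remainder (the available regularity is only $H^{2K+4}$, not analyticity), but this does not affect the argument.
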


\begin{proof}
Recall the decomposition of $\Psi$ defined in \eqref{decomposition of Psi},
\[
[\Delta \Psi]_j = \frac{1}{(2j)!} \partial_r^{2j} [\Delta \Psi] (0)
= \frac{1}{(2j)!} \partial_r^{2j} \Delta Q (0) + \frac{1}{(2j)!} \partial_r^{2j} \Delta \epu(0) + \frac{1}{(2j)!} \partial_r^{2j}\Delta \eps (0),
\]
where, by bootstrap assumptions \eqref{bootstrap assum: real unstable} and \eqref{bootstrap assum: fake unstable}, there exists a constant $C=C(K)>0$ such that
\[
\Bigg|\frac{1}{(2j)!} \partial_r^{2j} [\Delta Q] (0) \Bigg|+ \Bigg| \frac{1}{(2j)!} \partial_r^{2j} (\Delta \epu) (0) \Bigg| \le C(K), \quad \forall \; 0 \le j \le 2K.
\]
 
For $\partial_r^{2j} \Delta \eps(\tau,0)$, note that $\eps = O(r^{2K+2})$ at the origin,
\[
\partial_r^{2j} \Delta \eps(0) = 0, \quad \forall \; 0 \le j \le K-1.
\]
In addition, for $\partial_r^{2K} \Delta \eps(\tau,0)$, by Sobolev inequality, there exists $C=C(K)>0$ such that
\begin{align}
    |\partial_r^{2K} \Delta \eps(\tau,0)| & 
      \le C  \big| [\eps(\tau)]_{K+1} \big|
    \le C |\na^{2K+2} \eps (\tau,0)| \notag
    \\
    &  \le C\| \na^{2K+2} \eps(\tau) \|_{L^\infty} \le C \| \eps(\tau) \|_{H^{2K+4}}.
    \label{Taylor expansion: Delta eps}
\end{align}
Consequently, combining all of the arguments above immediately yields \eqref{Delta Psi: estimate on Taylor coeff}.
\end{proof}

\begin{lemma}[Estimates of terms related to $\epu$]
\label{lemma: preliminaries epu}
    Under the bootstrap assumptions of Proposition \ref{prop: bootstrap}, for the function $\epu$ defined in \eqref{decomposition of Psi}, $N(\epu)$ is compactly supported on $B(0,2)$ and satisfies
    \be
   \sum_{j=0}^K \Big| [N(\epu)(\tau)]_j \Big| + \Big\| N(\epu)(\tau) \Big\|_{H^{2K+4}} \le C \sum_{j=0}^K |c_j(\tau)|^2, \quad \forall \; \tau \in [0,\tau^*],
    \label{nonlinear term: epu}
    \ee
     for some uniform constant $C=C(K)>0$. In addition, $\calL \epu \in H^{2K+4}$ and satisfies
    \be
    \sum_{j=0}^K \Big| [\calL \epu(\tau)]_j \Big| + \big\| \calL \epu(\tau) \big\|_{H^{2K+4}} \le C \sum_{j=0}^K |c_j(\tau)|, \quad \forall \; \tau \in [0,\tau^*],
    \label{linear term: epu}
    \ee 
    for some uniform constant $C=C(K,Q)>0$.
\end{lemma}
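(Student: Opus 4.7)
The proof hinges on two structural observations: (i) each basis function $\chi(y)|y|^{2j}$ is smooth and supported in $B(0,2)$, so $\epu(\tau,\cdot)$ and $\na \epu(\tau,\cdot)$ are smooth, compactly supported in $B(0,2)$, with every fixed Sobolev or $L^\infty$ norm bounded linearly by $\sum_{j=0}^K |c_j(\tau)|$; (ii) on $B(0,1)$ the cutoff equals $1$, so $\epu$ reduces to the polynomial $\sum_j c_j |y|^{2j}$ and via \eqref{nonlocal term: formula} one has $\na \Dein \epu = r f_{\epu}(r)\,\mathbf{e}_r$ with $f_{\epu}(r) = \sum_{j=0}^K \frac{c_j}{2j+3}\, r^{2j}$, making Taylor coefficients at the origin completely explicit. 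Throughout, I would use these two ingredients (algebraic at the origin, quantitative with uniform constants on compact sets or at infinity) in tandem.

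For $N(\epu)$, compact support in $B(0,2)$ is immediate: the factor $\epu^2$ vanishes off $B(0,2)$, and $\na \epu \cdot \na \Dein \epu$ inherits the support of $\na \epu$. Near the origin, the product $\partial_r \epu \cdot r f_{\epu}(r)$ is a polynomial in $r^2$ whose coefficients are quadratic forms in $\{c_j\}$, yielding $|[N(\epu)]_j| \le C \sum_i |c_i|^2$ for $0 \le j \le K$ by direct extraction. For the $H^{2K+4}$ estimate, I would expand $N(\epu) = \sum_{i,j} c_i c_j\, N_{i,j}(y)$, where each $N_{i,j}(y)$ is a fixed smooth compactly supported function built from $\chi(y)|y|^{2i}$ and $\chi(y)|y|^{2j}$, so the algebra property of $H^{2K+4}$ on compactly supported smooth functions (together with elliptic regularity $\|\na \Dein \epu\|_{H^{2K+5}(B(0,3))} \le C \|\epu\|_{H^{2K+4}}$) controls each summand by $C|c_i c_j|$, giving the desired quadratic bound.

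For $\calL \epu$, the Taylor coefficient estimate $\sum_{j=0}^K |[\calL \epu]_j| \le C \sum_j |c_j|$ follows at once from the explicit formula \eqref{L epu: Taylor coefficients} derived earlier in the text. The Sobolev estimate I would carry out term by term. The local pieces $-(\epu + \beta y \cdot \na \epu)$, $\na \epu \cdot \na \Dein Q$, and $2(1-\mu) Q \epu$ are supported in $B(0,2)$, and using that $Q$ and $\na \Dein Q = r f_Q \mathbf{e}_r$ are smooth with uniformly controlled derivatives on $B(0,2)$ by Lemma \ref{lem: solve ODE}, each is bounded in $H^{2K+4}$ by $C \sum_j |c_j|$. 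The only nonlocal term $\na Q \cdot \na \Dein \epu$ requires splitting $\RR^3$ into $|y| \le 3$ and $|y| \ge 3$: on the bounded region, $\na \Dein \epu$ is smooth with Sobolev norm bounded by $C \sum_j |c_j|$; on $|y| \ge 3$, since $\epu$ is supported in $B(0,2)$, one has $\na \Dein \epu(y) = \frac{y}{|y|^3} M(\epu)$ with $|M(\epu)| \le C \sum_j |c_j|$, so every $k$-th derivative decays like $|y|^{-2-k}$, and combined with the decay \eqref{decay estimates of Q} of $\na Q$ (polynomial with exponent $\le -3$), every spatial derivative of $\na Q \cdot \na \Dein \epu$ up to order $2K+4$ is square-integrable with the advertised bound.

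The main obstacle is not conceptual but bookkeeping in the far field: one must distribute derivatives up to order $2K+4$ across $\na Q$ and $\na \Dein \epu$, verifying $L^2$-integrability from the decay rates of Lemma \ref{lem: solve ODE}. Since $Q \in H^\infty(\RR^3)$ and $\na \Dein \epu$ is smooth away from the origin with polynomial decay, every resulting term contributes an $L^2$-integrable tail, and the remaining compactly supported portion is controlled by standard Sobolev estimates.
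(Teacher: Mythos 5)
Your proposal is correct and follows essentially the same route as the paper: exploit that $\chi\equiv 1$ near the origin to read off the Taylor coefficients of $N(\epu)$ explicitly as quadratic forms in $\{c_j\}$ and those of $\calL\epu$ from \eqref{L epu: Taylor coefficients}, use the compact support of $\epu$, and bound the $H^{2K+4}$ norms by product/Sobolev estimates. The only cosmetic difference is in bookkeeping: the paper invokes Gagliardo--Nirenberg to get $\|N(\epu)\|_{H^{2K+4}}\le C\|\epu\|_{H^{2K+5}}^2$ and handles the nonlocal piece $\na Q\cdot\na\Dein\epu$ by the pattern of \eqref{5.2.3}--\eqref{5.2.4}, whereas you use the bilinear decomposition into fixed profiles $N_{i,j}$ and the exact far-field formula $\na\Dein\epu=\frac{y}{|y|^3}M(\epu)$ together with \eqref{decay estimates of Q}, which is an equivalent argument.
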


\begin{proof}
Recalling the definition of $\epu$ given in \eqref{decomposition of Psi}, and $N(\epu) = \na \Dein \epu \cdot \na \epu + (1-\mu) \epu^2$, we check
\begin{align*}
   N(\epu) 
    &= (r \partial_r \epu) \frac{1}{r^3} \int_0^r \epu(s) s^2 ds 
    + (1-\mu) \epu^2  \\
    & = \sum_{j_1,j_2=0}^K \left(\frac{2j_1 c_{j_1} c_{j_2}}{2j_2+3}
    + (1-\mu) c_{j_1} c_{j_2}  \right) r^{2j_1+2j_2} \\
    & = \sum_{j=0}^{2K} \left(  \sum_{j_1+j_2 =j}\frac{2j_1 c_{j_1} c_{j_2}}{2j_2+3} + (1-\mu) c_{j_1} c_{j_2}\right) r^{2j},  \quad \forall \; r \le \frac{1}{8}.
\end{align*}
This yields 
\begin{align*}
    \Big| [N(\epu)]_i \Big|
    & =  \Bigg| \sum_{j_1+j_2 =j}\frac{2j_1 c_{j_1} c_{j_2}}{2j_2+3} + (1-\mu) c_{j_1} c_{j_2} \Bigg| 
    \le C(K) \sum_{j=0}^K |c_j|^2, \quad \forall \; 0 \le i \le K.
\end{align*}
In addition, by using Gagliardo-Nirenberg inequality and  $\text{supp } \epu \in B(0,2)$,
\begin{align*}
\| N(\epu) \|_{H^{2K+4}}
& \le C | \epu \|_{H^{2K+5}}^2
\le C \sum_{j=0}^{K} |c_j|^2,
\end{align*}
which concludes \eqref{nonlinear term: epu}.
To verify \eqref{linear term: epu}, it is a direct result of \eqref{L epu: Taylor coefficients} that there exists a constant $C=C(K,Q)>0$ such that
\[
\Big| [\calL \epu]_i \Big| \le C(K,Q) \sum_{j=0}^K |c_j|, \quad \forall \; 0 \le i \le K.
\]
Additionally, recall \eqref{nonlocal term: formula}, Lemma \ref{lem: solve ODE} and $\text{supp } \epu \subset B(0,2)$, similar to the argument in \eqref{5.2.3} and \eqref{5.2.4}, by using Gagliardo-Nirenberg inequality, there exists 
a constant $C=C(K,Q)>0$ such that
\begin{align*}
    \big\| \calL \epu \big\|_{H^{2K+4}}  
    & \le  \Big\| -\left( \epu + \beta y \cdot \na \epu  \right) + \na \epu \cdot \na \Dein Q + 2(1-\mu) Q \epu \Big\|_{H^{2K+4}(B(0,2))} \\
    &  \quad + \| \na \Dein \epu \cdot \na Q \|_{H^{2K+4}} \\
    & \le C \| Q \|_{H^{2K+5}}\| \epu \|_{H^{2K+5}} \le C \sum_{j=0}^K |c_j|.
\end{align*}
This completes the proof of \eqref{linear term: epu}.
\end{proof}

\

\subsubsection{$L_w^2$ estimate of $\eps$.}
This section is to improve the $L_w^2(\RR^3)$ estimate of $\eps$ under the bootstrap assumptions of Proposition \ref{prop: bootstrap}. And the main result is as follows:
\begin{lemma}[$L_w^2$ estimate of $\eps$]
\label{lemma: L2 estimate of eps}
    Under the assumptions of Proposition \ref{prop: bootstrap}, there exists a constant $C=C(Q, A,B,\delta_g)>0$ such that
    \begin{align}
        \frac{d}{d\tau} \| \eps \|_{L_w^2}^2
        \le 
     - \delta_g \| \eps \|_{L_w^2}^2  +  C \left(\lambda^{4-8\beta} +  d_{real}^2 +\sum_{j=j_0+1}^K |c_j|^2  \right) \quad \forall \; \tau \in [0,\tau^*].
    \label{L2 estimate of eps}
    \end{align}
    In particular, the bootstrap assumption \eqref{bootstrap assum: L2 stable} can be improved to
    \[
    \| \eps (\tau) \|_{L_w^2} \le \frac{1}{2} \delta_1 e^{-\frac{\delta_g}{2} \tau}, \quad \forall \; \tau \in [0,\tau^*].
    \]
\end{lemma}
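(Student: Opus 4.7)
\medskip

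\noindent \textbf{Proof proposal.} The plan is to compute $\tfrac{1}{2}\tfrac{d}{d\tau}\|\eps\|_{L_w^2}^2 = (\partial_\tau \eps,\eps)_{L_w^2}$, substitute the evolution equation \eqref{eq: eps}, and use the linear coercivity of Proposition \ref{prop: coer} as the source of damping, while showing that every other contribution is either absorbable into the damping or bounded by the source terms on the right-hand side of \eqref{L2 estimate of eps}. Thus testing \eqref{eq: eps} against $\eps w$ gives
\begin{align*}
\tfrac{1}{2}\tfrac{d}{d\tau}\|\eps\|_{L_w^2}^2
&= (\calL \eps,\eps)_{L_w^2} + (N(\eps),\eps)_{L_w^2} \\
&\quad + (\na \epu \cdot \na \Dein \eps,\eps)_{L_w^2} + (\na \eps \cdot \na \Dein \epu,\eps)_{L_w^2} + 2(1-\mu)(\epu\eps,\eps)_{L_w^2} \\
&\quad + (G[\lambda,\Psi,\epu],\eps)_{L_w^2}.
\end{align*}
The first term is $\le -\tfrac{1}{8}\|\eps\|_{L_w^2}^2$ by Proposition \ref{prop: coer}.

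\medskip

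The next step is to absorb the cubic and coupling terms into this damping. For $N(\eps) = \na \eps \cdot \na \Dein \eps + (1-\mu)\eps^2$, Sobolev embedding $\dot H^{2K+4}\hookrightarrow W^{1,\infty}$ (valid since $2K+4 \ge 3$) and bootstrap \eqref{bootstrap assum: Hm stable} give $\|\eps\|_{W^{1,\infty}} \le C\delta_2$, so $|(N(\eps),\eps)_{L_w^2}| \le C\delta_2 \|\eps\|_{L_w^2}^2$. For the $\epu$-coupling terms, I use that $\epu$ is supported in $B(0,2)$ and $\|\epu\|_{W^{k,\infty}} \le C\sum_{j=0}^K |c_j| \le C(d_{real} + \sum_{j=j_0+1}^K |c_j|) \le C(\delta_3 + \sum_{j>j_0}\delta_j)$. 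The nonlocal factor $\na \Dein \eps$ is controlled pointwise via \eqref{linear estimate: nonlocal 1} (which uses exactly the vanishing order of $\eps$ at the origin forced by \eqref{stable part: modulation condition} combined with the weight $|y|^{-A}$), giving all three coupling terms a bound $\le C(\delta_3 + \sum\delta_j) \|\eps\|_{L_w^2}^2$. By the smallness hierarchy \eqref{delta i: restriction}, all these contributions are absorbed, yielding the refined bound
\begin{equation*}
\tfrac{1}{2}\tfrac{d}{d\tau}\|\eps\|_{L_w^2}^2 \le -\tfrac{1}{16}\|\eps\|_{L_w^2}^2 + (G,\eps)_{L_w^2}.
\end{equation*}

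\medskip

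The main technical step is the estimate of $(G,\eps)_{L_w^2}$ via the decomposition \eqref{G: new expression}. Each piece of $G$ has high-order vanishing at the origin (at least $r^{2K+2}$), which combined with the singular weight $|y|^{-A}$ (recall $A = 4K$) keeps the weighted $L^2$ norm finite and quantitatively controlled. Explicitly: the two diffusion contributions $\lambda^{2-4\beta}\bigl(\Delta Q + \Delta\epu - \sum_j[\Delta Q + \Delta\epu]_j\chi r^{2j}\bigr)$ and $\lambda^{2-4\beta}\bigl(\Delta \eps - [\Delta\eps]_K \chi r^{2K}\bigr)$ are bounded in $L_w^2$ by $C\lambda^{2-4\beta}(1 + \sum|c_j| + \|\eps\|_{\dot H^{2K+4}})$ using Lemma \ref{lemma: preliminaries, Psi} and the Taylor remainder estimate; the remaining two pieces $\calL\epu - \sum[\calL\epu]_j\chi r^{2j}$ and $N(\epu) - \sum[N(\epu)]_j\chi r^{2j}$ are estimated in $L_w^2$ via Lemma \ref{lemma: preliminaries epu} by $C\sum|c_j|$ and $C\sum|c_j|^2$ respectively. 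Young's inequality and $|c_j|^2 \le d_{real}^2 + \sum_{j>j_0}|c_j|^2$ then give
\begin{equation*}
|(G,\eps)_{L_w^2}| \le \tfrac{1}{32}\|\eps\|_{L_w^2}^2 + C\bigl(\lambda^{4-8\beta} + d_{real}^2 + \sum_{j=j_0+1}^K |c_j|^2\bigr),
\end{equation*}
which combined with the previous bound yields the differential inequality \eqref{L2 estimate of eps} with damping rate $-\delta_g$ (using $\delta_g \ll \tfrac{1}{8}$ from \eqref{delta g: restriction}).

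\medskip

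For the improved bootstrap bound, I integrate the differential inequality. Writing $E(\tau) = \|\eps(\tau)\|_{L_w^2}^2$, the inequality gives $\tfrac{d}{d\tau}(e^{\delta_g\tau}E) \le Ce^{\delta_g\tau}(\lambda^{4-8\beta} + d_{real}^2 + \sum|c_j|^2)$. Since $\lambda^{4-8\beta} = \lambda_0^{4-8\beta}e^{-(2-4\beta)\tau}$ with $2-4\beta \gg \delta_g$, and $d_{real}^2 + \sum|c_j|^2 \le (\delta_3^2 + \sum\delta_j^2)e^{-\tfrac{7}{5}\delta_g\tau}$ by \eqref{bootstrap assum: real unstable}--\eqref{bootstrap assum: fake unstable}, all source factors decay strictly faster than $e^{-\delta_g\tau}$, so integrating gives
\begin{equation*}
e^{\delta_g\tau}E(\tau) \le E(0) + C\bigl(\lambda_0^{4-8\beta} + \delta_3^2 + \sum_{j>j_0}\delta_j^2\bigr) \le C(\delta_0^2 + \delta_3^2 + \sum_{j>j_0}\delta_j^2) \ll \tfrac{1}{4}\delta_1^2,
\end{equation*}
using the hierarchy \eqref{delta i: restriction}, which gives the strict improvement of \eqref{bootstrap assum: L2 stable}. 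The hardest part is clean bookkeeping of the vanishing orders in the modulation term $G$ and making sure every factor of $w$ pairs with enough powers of $r$; the nonlocal pointwise bound \eqref{linear estimate: nonlocal 1} is the key tool that makes this clean.
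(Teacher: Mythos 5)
Your overall architecture is the same as the paper's: test \eqref{eq: eps} against $\eps w$, use Proposition \ref{prop: coer} for the damping, estimate the modulation term $G$ in $L_w^2$ through its $O(r^{2K+2})$ vanishing at the origin together with Lemmas \ref{lemma: preliminaries, Psi}--\ref{lemma: preliminaries epu} (this is exactly the paper's Lemma \ref{estimate of G}), and close with Gronwall and the $\delta$-hierarchy. The $G$-estimate, the treatment of $(\na\epu\cdot\na\Dein\eps,\eps)_{L_w^2}$ via \eqref{linear estimate: nonlocal 1} (legitimate there because $\na\epu$ is supported in $B(0,2)$), and the final integration step are all fine.

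The genuine gap is in the quadratic transport terms $(\na\eps\cdot\na\Dein\eps,\eps)_{L_w^2}$ and $(\na\eps\cdot\na\Dein\epu,\eps)_{L_w^2}$, which you dispatch by "pointwise control plus $\|\eps\|_{W^{1,\infty}}\le C\delta_2$." That justification does not close. For the first term, the available pointwise bound on the nonlocal factor is $\big|\na\Dein\eps\big|\le \frac{1}{r^2}\int_0^r|\eps|s^2\,ds\lesssim \min\big(r\|\eps\|_{L^\infty},\, r^{-1/2}\|\eps\|_{L^2}\big)$, and since the weight $w=|y|^{-A}+B$ contains the constant $B$, the resulting integral $\int |\na\eps|\,|\na\Dein\eps|\,|\eps|\,w\,dy$ is not controlled by $\|\eps\|_{W^{1,\infty}}\|\eps\|_{L_w^2}^2$ by H\"older at large $|y|$ (one is forced into an $L^1$-type or divergent-weight quantity; note $\na\Dein$ is not bounded on $L^2$, it is the $\dot H^{-1}$ norm). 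For the second term the local factor is $\na\eps$, which is in none of your controlled weighted norms near the origin, so \eqref{linear estimate: nonlocal 1} cannot be invoked as you state (there the nonlocal factor involves $\epu$, not $\eps$). The missing idea is the divergence structure the paper uses: write $\na\eps\cdot\na\Dein\phi=\tfrac12\na(\eps^2)\cdot\na\Dein\phi$ and integrate by parts, so that $\Delta\Dein\phi=\phi$ produces $-\tfrac12\int\eps^2\phi\,w$ and the nonlocal field pairs with $\na w$, which retains only the singular part of the weight; then $|\na\Dein\phi\cdot\na w|\le C(A)\|\phi\|_{L^\infty}w$ and both terms are bounded by $C\|\phi\|_{L^\infty}\|\eps\|_{L_w^2}^2$ (with $\phi=\eps$ or $\epu$), as in \eqref{L2: weight norm II 2}. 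Your claimed intermediate bounds are in fact true, but only after this integration-by-parts step (or an equivalent substitute); as written, the step would fail, and it is precisely the structural point that makes the singular-weight energy method work for the nonlocal terms.
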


The proof of Lemma \ref{lemma: L2 estimate of eps} is lengthy; thus, we separate the $L_w^2$ estimate of the modulation term $G[\lambda, \Psi, \epu]$ defined in \eqref{G: def}. The estimate is as follows:
\begin{lemma}[Estimate on modulation term \text{$G[\lambda,\Psi,\epu]$}]
\label{estimate of G}
    Under the bootstrap assumption of Proposition \ref{prop: bootstrap}, as for $G[\lambda, \Psi, \epu]$ defined in \eqref{G: def},
    there exists a uniform constant $C=C(Q,A,B)>0$ such that
    \be
    \| G [\lambda, \Psi, \epu] \|_{L_w^2} \le C \lambda^{2-4\beta}  + C \sum_{j=0}^K |c_j|.
    \label{estimate of error term: singular weight}
    \ee
\end{lemma}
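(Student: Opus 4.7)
The plan is to use the decomposition \eqref{G: new expression}, which writes $G[\lambda,\Psi,\epu]$ as the sum of four pieces:
\begin{align*}
G_1 &:= \lambda^{2-4\beta}\Big((\Delta Q + \Delta\epu) - \sum_{j=0}^K [\Delta Q + \Delta\epu]_j \chi r^{2j}\Big), \\
G_2 &:= \lambda^{2-4\beta}\big(\Delta\eps - [\Delta\eps]_K \chi r^{2K}\big), \\
G_3 &:= \calL\epu - \sum_{j=0}^K [\calL\epu]_j \chi r^{2j}, \\
G_4 &:= N(\epu) - \sum_{j=0}^K [N(\epu)]_j \chi r^{2j},
\end{align*}
and to estimate each piece separately in $L_w^2$. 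The crucial structural observation is that each $G_i$ has been built precisely to vanish to order $2K+2$ at the origin, which exactly matches the singular weight $w(y)=|y|^{-A}+B$ with $A=4K$ from Proposition \ref{prop: coer}: for a smooth radial $h$ with $h(y)=O(|y|^{2K+2})$ near the origin, $|h|^2w = O(|y|^4)+O(|y|^{4K+4})$, which is integrable near the origin in $\RR^3$.

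For $G_1$, the key is that $\epu$ is a polynomial in $r^2$ of degree at most $K$ inside $B(0,1)$ (where $\chi\equiv 1$), so $\Delta\epu$ is also such a polynomial and its truncated Taylor sum cancels it exactly there. What remains in $B(0,1)$ is $\Delta Q - \sum_{j=0}^K [\Delta Q]_j \chi r^{2j}$, which is $O(r^{2K+2})$ by the analyticity from Lemma \ref{lem: solve ODE}, while outside $B(0,1)$ the decay $|\Delta Q|\le C\la r\ra^{-4}$ from the same lemma gives an $L^2$-bound; cutoff derivative terms are supported in $B(0,2)\setminus B(0,1)$ and scale linearly in $\sum_j|c_j|$. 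Hence $\|G_1\|_{L_w^2}\le C\lambda^{2-4\beta}\big(1+\sum_{j=0}^K|c_j|\big)$. For $G_2$, the modulation conditions \eqref{stable part: modulation condition} force $\eps=O(r^{2K+2})$ near the origin, so $\Delta\eps=[\Delta\eps]_K r^{2K}+O(r^{2K+2})$ and the leading coefficient is exactly canceled; the bound $|[\Delta\eps]_K|\le C\|\eps\|_{H^{2K+4}}$ from \eqref{Taylor expansion: Delta eps}, together with the bootstrap \eqref{bootstrap assum: Hm stable}, yields $\|G_2\|_{L_w^2}\le C\lambda^{2-4\beta}$.

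For $G_3$ and $G_4$, Lemma \ref{lemma: preliminaries epu} provides the global bounds $\|\calL\epu\|_{H^{2K+4}}\le C\sum_j|c_j|$ and $\|N(\epu)\|_{H^{2K+4}}\le C(\sum_j|c_j|)^2$, together with pointwise control on their Taylor coefficients. Inside $B(0,1)$, Taylor's theorem shows that the subtracted remainders are $O(r^{2K+2})$; outside, the $L_w^2$-norms are dominated by the corresponding $H^{2K+4}$-norms plus compactly supported polynomial tails, yielding $\|G_3\|_{L_w^2}\le C\sum_j|c_j|$ and $\|G_4\|_{L_w^2}\le C(\sum_j|c_j|)^2\le C\sum_j|c_j|$, the final inequality using the bootstrap smallness of the $c_j$'s.

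Summing the four bounds and absorbing the cross term $\lambda^{2-4\beta}\sum_j|c_j|$ (using either the boundedness of $\lambda$ or of $\sum_j|c_j|$) gives the desired estimate. The main technical subtlety is tracking the near-origin Taylor remainders against the singular weight: the choice $A=4K$ leaves essentially no margin of integrability, which is exactly why the decomposition requires $\eps$ to vanish to order $2K+2$ and why the split $\ep=\epu+\eps$ into $K+1$ unstable-type modes is forced at this order.
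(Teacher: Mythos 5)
Your proposal is correct and follows essentially the same route as the paper: the paper also exploits that $G[\lambda,\Psi,\epu]$ vanishes to high order at the origin so that a Taylor-remainder-plus-Sobolev argument reduces $\|G\|_{L_w^2}$ to $\|G\|_{H^{2K+2}}$, which is then bounded term by term from \eqref{G: new expression} using Lemma \ref{lemma: preliminaries, Psi}, \eqref{Taylor expansion: Delta eps}, Lemma \ref{lemma: preliminaries epu} and the bootstrap — your only deviation is doing this piece by piece ($G_1,\dots,G_4$) instead of for $G$ as a whole, which is immaterial. One small quantitative caveat: for the piece $\lambda^{2-4\beta}(\Delta\eps-[\Delta\eps]_K\chi r^{2K})$ you should use the order-$2K$ Taylor remainder, giving the pointwise bound $C|y|^{2K}\|\eps\|_{H^{2K+4}}$ (still integrable against $|y|^{-4K}$), since exploiting the full $O(|y|^{2K+2})$ vanishing quantitatively would require derivatives of $\eps$ beyond the $H^{2K+4}$ norm controlled by the bootstrap — this is precisely why the paper's reduction stops at $\|D^{2K}G\|_{L^\infty}\lesssim\|G\|_{H^{2K+2}}$.
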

\begin{proof}[Proof of Lemma \text{\ref{estimate of G}}]
Recall \eqref{G: new expression}, $G[\lambda,\Psi, \epu] = O(r^{2K+2})$ at the origin, then by applying
 Taylor expansion with the form of integral residue at the origin onto $G[\lambda,\Psi,\epu]$,
\begin{align*}
     \big \| G[\lambda, \Psi, \epu] \big \|_{L_w^2(|y| \le 1)}^2
     & \le C \int_{|y| \le 1} |G[\lambda,\Psi, \epu]|^2 \frac{1}{|y|^{4K}} dy  \\
     & \le C \big\| D^{2K} G[\lambda, \Psi, \epu] \big\|_{L^\infty}^2 \le 
     C \| G[\lambda, \Psi, \epu] \|_{H^{2K+2}}^2.
\end{align*}
For the case with $|y| \ge 1$,
\begin{align*}
    \big \| G[\lambda, \Psi, \epu] \big \|_{L_w^2(|y| \ge 1)}
    & \le C \big \| G[\lambda, \Psi, \epu] \big \|_{L^2(|y| \ge 1)}  \le C  \| G[\lambda, \Psi, \epu] \|_{H^{2K+2}}.
\end{align*}

Consequently, together with the definition of $G[\lambda, \Psi,\epu]$ in \eqref{G: new expression}, Lemma \ref{lemma: preliminaries, Psi}, \eqref{Taylor expansion: Delta eps}, and Lemma \ref{lemma: preliminaries epu}, this concludes that
\begin{align*}
  & \quad  \| G[\lambda, \Psi, \epu] \|_{L_w^2} \le 
     C \| G[\lambda, \Psi, \epu] \|_{H^{2K+2}} \\
     & \le C\lambda^{2-4\beta} \left( \| \Delta Q + \epu \|_{H^{2K+2}} +\| \Delta \eps \|_{H^{2K+2}}+ \sum_{j=0}^K \big|[\Delta Q + \epu]_j \big| + [\Delta \eps]_K \right) \\
    &   \qquad +C \| \calL \epu \|_{H^{2K+2}} + \sum_{j=0}^K \big|[\calL \epu]_j \big | + C \| N(\epu) \|_{H^{2K+2}} + C \sum_{j=0}^K \big|[N(\epu)]_j \big| \\
    & \le C \lambda^{2-4\beta} \left(1+ \| \eps \|_{H^{2K+4}} \right) + C \left( \sum_{j=0}^K |c_j| + |c_j|^2 \right)
    \le C  \lambda^{2-4\beta}  + C  \sum_{j=0}^K |c_j|.
\end{align*}
for some uniform constant $C>0$, and thus we have concluded the proof.
\end{proof}

 Next, we are devoted to the proof of Lemma \ref{lemma: L2 estimate of eps}.
 
\begin{proof}[Proof of Lemma \ref{lemma: L2 estimate of eps}]
Recall that $\eps$ solves the equation \eqref{eq: eps},
    \begin{align*}
        \frac{1}{2} \frac{d}{d\tau} \| \eps \|_{L_w^2}^2
          = (\partial_\tau \eps, \eps)_{L_w^2} 
          = I + I I + III + IV,
    \end{align*}
    with
    \begin{align*}
        & I: = \left( \calL \eps, \eps \right)_{L_w^2}, \quad 
         II:= \left(\na \epu \cdot \na \Dein \eps + \na \eps \cdot \na \Dein \epu + 2(1-\mu) \epu \eps, \eps \right)_{L_w^2}, \\
        & III
        := \left( N(\eps), \eps \right)_{L_w^2},
        \quad  IV:= \left(G[\lambda, \Psi, \epu], \eps \right)_{L_w^2}.
    \end{align*}

    \noindent \textit{Step 1. Estimate of the small linear term $II$.} The small linear term $II$ can be written as $II:=II_1+II_2+ II_3$ with
    \[
    II_1 : =\left(\na \epu \cdot \na \Dein \eps , \eps \right)_{L_w^2}, \quad 
    II_2 := \left(\na \eps \cdot \na \Dein \epu, \eps \right)_{L_w^2}, \quad 
    II_3:= \left( 2(1-\mu) \epu \eps, \eps \right)_{L_w^2}.
    \]
    First of all, repeating \eqref{I NLO 1}, together with the fact that $\text{supp }\epu \subset B(0,2)$, there exists $C= C(A)>0$ such that $II_1$ can be controlled by
\begin{align*}
    II_1 \le C \big\| |y|^{-\frac12} \partial_r \epu \big\|_{L^2} \| \eps \|_{L_w^2}^2 \le C \| \na \epu \|_{L^\infty} \| \eps \|_{L_w^2}^2.
\end{align*}

For $II_2$, direct computation shows
\begin{align}
    II_2 
    & = \int \left( \na \eps \cdot \na \Dein \epu \right) \eps w dy  = \frac{1}{2} \int \na \eps^2 \cdot \na \Dein \epu w dy \notag \\
    & = -\frac{1}{2} \int \eps^2  \epu w dy - \frac{1}{2} \int \eps^2 \na \Dein \epu \cdot \na w dy \le C \| \epu \|_{L^\infty} \| \eps \|_{L_w^2}^2,
    \label{L2: weight norm II 2}
\end{align}
where the last inequality holds by Cauchy-Schwarz inequality together with the pointwise estimate
\begin{align*}
    |\na \Dein \epu \cdot \na w |
    & = \Bigg| \left( \frac{1}{r^2} \int_0^r \epu (s) s^2 ds \right) \left( \partial_r w  \right) \Bigg|
    \le C(A) \| \epu \|_{L^\infty} w.
\end{align*}

For $II_3$, it can be directly controlled by
\begin{align*}
    II_3 & \le \Big| \left( 2(1-\mu) \epu \eps , \eps \right)_{L_w^2} \Big|
     \le 2(1-\mu) \| \epu \|_{L^\infty} \| \eps \|_{L_w^2}^2.
\end{align*}

In sum, the small linear term $II$ can be estimated by
\begin{align*}
 II \le C \| \epu\|_{W^{1,\infty}} 
 \| \eps\|_{L_w^2}^2.
\end{align*}

\vspace{0.3cm}
\noindent \textit{Step 2. Estimate of the nonlinear term $III$.} 
We begin by performing integration by parts and using a similar argument in \eqref{L2: weight norm II 2}, it then follows that there exists $C=C(A)>0$ such that
\begin{align*}
    III & =\left( \na \eps \cdot \na \Dein \eps + (1-\mu) \eps^2, \eps  \right)_{L_w^2}  \le C \| \eps \|_{L^\infty} \| \eps \|_{L_w^2}^2.
\end{align*}

\vspace{0.3cm}

\noindent \textit{Step 3. Summary of the estimates.} Combining all of the estimates above, \eqref{L: coer}, \eqref{delta g: restriction}, \eqref{delta i: restriction} and \eqref{estimate of error term: singular weight}, and applying Young's inequality, we obtain that there exists a constant $C= C(Q,A,B,\delta_g) >0$ such that
\begin{align*}
    \frac{1}{2} \frac{d}{d\tau} \| \eps \|_{L_w^2}^2
    & \le -\frac{1}{8} \| \eps \|_{L_w^2}^2 + C \left( \| \epu \|_{W^{1,\infty}} + \| \eps \|_{L^\infty} \right) \| \eps \|_{L_w^2}^2 + C \left( \lambda^{2-4\beta} +\sum_{j=0}^{K} |c_j| \right) \| \eps \|_{L_w^2} \\
    & \le - \frac{\delta_g}{2} \| \eps \|_{L_w^2}^2 + C \left( \lambda^{4-8\beta} + \sum_{j=0}^K |c_j|^2 \right),
\end{align*}
which concludes \eqref{L2 estimate of eps}. Hence, recalling the bootstrap assumptions \eqref{bootstrap assum: L2 stable}, \eqref{bootstrap assum: Hm stable}, \eqref{bootstrap assum: real unstable} and \eqref{bootstrap assum: fake unstable}, and by Gronwall's inequality,
\begin{align}
    & \quad \| \eps(\tau) \|_{L_w^2}^2 
    \le  e^{-\delta_g \tau} \| \eps(0) \|_{L_w^2}^2
    + C \int_0^\tau e^{-\delta_g (\tau -s)} \left(  \lambda^{4-8\beta}(s) + d_{real}^2(s) + \sum_{j=j_0+1}^K |c_j(s)|^2 \right) ds \notag \\
    & \le  e^{-\delta_g \tau} \| \eps(0) \|_{L_w^2}^2 + C \int_0^\tau e^{-\delta_g (\tau -s)} \left( \lambda_0^{4-8\beta} e^{-(2-4\beta) s} + \left( \delta_3^2 + \sum_{j=j_0+1}^K \delta_j^2 \right) e^{-\frac{7}{5} \delta_g s} \right) ds \notag \\
    & \le \left[ \delta_0^2 + C \left( \lambda_0^{4-8\beta} + \delta_3^2 + \sum_{j=j_0+1}^K \delta_j^2 \right)  \right] e^{-\delta_g \tau}, \quad \forall \; \tau \in (0,\tau^*], 
    \label{delta: choice 1}
\end{align}
for some constant $C=C(Q, A,B,\delta_g)>0$ independent on each $\delta_i$. Ttogether with \eqref{delta i: restriction} and \eqref{bootstrap: initial condition}, this yields that
\[
\| \eps (\tau) \|_{L_w^2} \le \frac{1}{2} \delta_1 e^{-\frac{1}{2} \delta_g \tau}, \quad \forall \; \tau \in [0,\tau^*],
\]
hence we have concluded the proof.
\end{proof}

\subsubsection{$\dot H^{2K+4}$ estimate of $\eps$.} This section is to improve the $\dot H^{2K+4}(\RR^3)$ estimate of $\eps$ under the bootstrap assumptions of Proposition \ref{prop: bootstrap}. The main result is as follows:

\begin{lemma}[$\dot H^{2K+4}$ estimate of $\eps$]
\label{lemma: Hm estimate of eps}
Under the assumptions of Proposition \ref{prop: bootstrap}, there exists a constant $C=C(Q, A,B,\delta_g)>0$ such that the following inequality hold uniformly in $\tau \in [0,\tau^*]$:
\be
\frac{d}{d\tau} \| \eps \|_{\dot H^{2K+4}}^2 \le -  \frac{1}{16} \| \eps \|_{\dot H^{2K+4}}^2 + C \| \eps \|_{L^2}^2
    +C\left( \| \eps \|_{H^{2K+4}}^4 + \lambda^{4-8\beta} + \sum_{j=0}^{K} |c_j|^2 \right).
\label{stable part: higher regu}
\ee
In particular, the bootstrap assumption \eqref{bootstrap assum: Hm stable} can be improved to
\[
\| \eps (\tau) \|_{\dot H^{2K+4}} \le \frac{1}{2} \delta_2 e^{-\frac{\delta_g}{2} \tau}, \quad \forall \; \tau \in [0,\tau^*].
\]
\end{lemma}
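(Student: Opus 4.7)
The strategy mirrors Lemma \ref{lemma: L2 estimate of eps} for the $L_w^2$ estimate, but with the inner product replaced by $(\cdot,\cdot)_{\dot H^{2K+4}}$. I will differentiate $\|\eps\|_{\dot H^{2K+4}}^2$ in $\tau$, substitute the evolution equation \eqref{eq: eps}, and bound the four resulting contributions: the principal linear term $(\calL\eps,\eps)_{\dot H^{2K+4}}$, the small linear terms involving $\epu$, the nonlinear term $(N(\eps),\eps)_{\dot H^{2K+4}}$, and the modulation term $(G[\lambda,\Psi,\epu],\eps)_{\dot H^{2K+4}}$.

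The principal linear part is handled directly by Proposition \ref{prop: core H^K}, yielding $-\frac{1}{8}\|\eps\|_{\dot H^{2K+4}}^2+C\|\eps\|_{L^2}^2$. For the small linear products involving $\epu$, I will expand $\Delta^{K+2}$ via the Leibniz rule, use Gagliardo-Nirenberg interpolation, and exploit the fact that $\epu$ is compactly supported in $B(0,2)$ with all Sobolev norms bounded by $\sum_{j=0}^K|c_j|$; the Hardy-type bound on $\na\Dein\eps$ (as in \eqref{linear estimate: nonlocal 1}) handles the nonlocal factor. Young's inequality then splits these contributions into a small fraction of the principal damping plus terms controlled by $\sum|c_j|^2$ and $\|\eps\|_{L^2}^2$. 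For the nonlinear piece, $\|N(\eps)\|_{\dot H^{2K+4}}\le C\|\eps\|_{H^{2K+4}}^2$ via Gagliardo-Nirenberg and Hardy, and Cauchy-Schwarz followed by Young yields a contribution bounded by $C\|\eps\|_{H^{2K+4}}^4$ plus a small fraction of damping.

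The delicate point is the modulation term $G$, whose expression \eqref{G: new expression} contains $\lambda^{2-4\beta}\Delta\eps$, one derivative above the working regularity. Rather than trying to bound it in $\dot H^{2K+4}$, I will integrate by parts to convert the pairing into a useful dissipation:
\[
\lambda^{2-4\beta}(\Delta\eps,\eps)_{\dot H^{2K+4}} \;=\; -\lambda^{2-4\beta}\|\na\Delta^{K+2}\eps\|_{L^2}^2 \;\le\; 0,
\]
which can simply be dropped. The accompanying Taylor correction $-\lambda^{2-4\beta}[\Delta\eps]_K\chi r^{2K}$ is harmless: $|[\Delta\eps]_K|\le C\|\eps\|_{H^{2K+4}}$ by Sobolev embedding together with the vanishing condition $\eps=O(r^{2K+2})$, while $\|\chi r^{2K}\|_{\dot H^{2K+4}}$ is a fixed constant. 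The remaining pieces of $G$, built from $Q$, $\epu$ and their Taylor remainders, satisfy $\|\cdot\|_{\dot H^{2K+4}}\le C\lambda^{2-4\beta}+C\sum_j|c_j|$ by the same arguments as in the proof of Lemma \ref{estimate of G}, now invoking Lemma \ref{lemma: preliminaries epu} for $\calL\epu$ and $N(\epu)$, and Lemma \ref{lem: solve ODE} for $Q$. Assembling these bounds and using Young's inequality produces \eqref{stable part: higher regu}.

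To close the bootstrap, I will apply Gronwall's inequality to \eqref{stable part: higher regu}, using $\|\eps\|_{L^2}^2\le B^{-1}\|\eps\|_{L_w^2}^2$ together with the already improved Lemma \ref{lemma: L2 estimate of eps} for the $L^2$ source, and $\lambda(\tau)=\lambda_0 e^{-\tau/2}$ together with \eqref{bootstrap assum: real unstable}--\eqref{bootstrap assum: fake unstable} for the remaining sources. With the ordering \eqref{delta i: restriction}, all source terms integrate to at most $\frac{1}{4}\delta_2^2 e^{-\delta_g\tau}$, yielding the improved bound $\|\eps(\tau)\|_{\dot H^{2K+4}}\le\frac{1}{2}\delta_2 e^{-\delta_g\tau/2}$. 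The main obstacle is the regularity mismatch in the $\lambda^{2-4\beta}\Delta\eps$ contribution to $G$, which is resolved by its sign definiteness after integration by parts; a secondary difficulty is verifying that the quartic term $\|\eps\|_{H^{2K+4}}^4$ (admissible because the bootstrap makes it quadratically small) can be absorbed without undermining the coefficient $\frac{1}{16}$.
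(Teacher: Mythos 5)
Your overall architecture is exactly the paper's: the same four-way splitting of $\frac{d}{d\tau}\|\eps\|_{\dot H^{2K+4}}^2$, the principal term handled by Proposition \ref{prop: core H^K}, the sign-definite treatment of the $\lambda^{2-4\beta}\Delta\eps$ piece of $G$ after integration by parts (dropping $-\lambda^{2-4\beta}\|\na\Delta^{K+2}\eps\|_{L^2}^2$, with the Taylor correction $[\Delta\eps]_K\chi r^{2K}$ controlled by $\|\eps\|_{H^{2K+4}}$), the bound $\|G-\lambda^{2-4\beta}\Delta\eps\|_{\dot H^{2K+4}}\le C\lambda^{2-4\beta}+C\sum_j|c_j|$ via Lemmas \ref{lemma: preliminaries, Psi}--\ref{lemma: preliminaries epu}, and the Gronwall closing with the $\delta_i$ hierarchy. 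All of that matches the paper's proof.

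However, there is a genuine gap in your treatment of the transport-type terms. You claim $\|N(\eps)\|_{\dot H^{2K+4}}\le C\|\eps\|_{H^{2K+4}}^2$ "via Gagliardo--Nirenberg and Hardy", but this is false as stated: $\Delta^{K+2}\bigl(\na\eps\cdot\na\Dein\eps\bigr)$ contains the term $\na\Delta^{K+2}\eps\cdot\na\Dein\eps$, which involves $2K+5$ derivatives of $\eps$, one more than $H^{2K+4}$ controls, and no interpolation or Hardy inequality applied to the norm of $N(\eps)$ alone can remove this loss. The same issue occurs in the small linear term $\na\eps\cdot\na\Dein\epu$, where your sketch only invokes the Hardy-type bound for the factor $\na\Dein\eps$ (which is the harmless direction) and does not address the top-order derivative landing on $\eps$. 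The correct resolution --- which is what the paper does, and which is the same device you already use for the $\lambda^{2-4\beta}\Delta\eps$ piece --- is to estimate the pairing rather than the norm: single out the top-order pieces $\bigl(\na\Delta^{K+2}\eps\cdot\na\Dein\eps,\Delta^{K+2}\eps\bigr)_{L^2}$ and $\bigl(\na\Delta^{K+2}\eps\cdot\na\Dein\epu,\Delta^{K+2}\eps\bigr)_{L^2}$ and integrate by parts, using $\div\na\Dein g=g$, to convert them into $-\frac12\int(\Delta^{K+2}\eps)^2\,\eps\,dy$ and $-\frac12\int(\Delta^{K+2}\eps)^2\,\epu\,dy$, which are bounded by $\|\eps\|_{L^\infty}\|\eps\|_{\dot H^{2K+4}}^2$ and $\|\epu\|_{L^\infty}\|\eps\|_{\dot H^{2K+4}}^2$ respectively; the lower-order commutator terms are then controlled by Gagliardo--Nirenberg as you describe. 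With this repair your argument coincides with the paper's and closes as claimed.
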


\begin{proof}
    Recall that $\eps$ solves the equation \eqref{eq: eps},
    \begin{align*}
        &  \frac{1}{2} \frac{d}{d\tau} \| \eps \|_{\dot H^{2K+4}}^2
         = I + II + III + IV,
    \end{align*}
    with
    \begin{align*}
         &  I : = \left ( \Delta^{K+2}\calL \eps,\Delta^{K+2} \eps\right )_{L^2}, \\
         & II:= \left( \Delta^{K+2} \left( \na \epu \cdot \na \Dein \eps + \na \eps \cdot \na \Dein \epu + 2(1-\mu) \epu \eps \right), \Delta^{K+2} \eps \right)_{L^2}, \\
         &III:= \left(\Delta^{K+2}N(\eps),\Delta^{K+2} \eps\right )_{L^2}, \quad   
         IV:= \left(\Delta^{K+2}G[\lambda, \Psi, \epu],\Delta^{K+2} \eps\right )_{L^2}.
    \end{align*}

    \noindent \underline{\textit{Estimate of small linear term $II$.}} For the small linear term $II$, since $\epu \in C_0^\infty(\RR^3)$ with support in $B(0,2)$, together with integration by parts, Sobolev inequality, Gagliardo-Nirenberg inequality and \eqref{nonlocal term: formula}, there exists uniform constant $C=C(K,\mu)>0$ such that
    \begin{align}
         II
        & \le  \left( \Delta^{K+2} \left( \na \eps \cdot \na \Dein \epu \right), \Delta^{K+2} \eps \right)_{L^2}
        + \left( \Delta^{K+2} \left( \na \epu \cdot \na \Dein \eps + 2(1-\mu) \epu \eps \right), \Delta^{K+2} \eps \right)_{L^2} \notag \\
        & \le \left( \na \Delta^{K+2} \eps \cdot \na \Dein \epu, \Delta^{K+2} \eps \right)_{L^2}
        + C \sum_{j=1}^{2K+4} \| \na^{2K+5-j} \eps \|_{L^2} \| \na^{j+1} \Dein \epu \|_{L^\infty} \| \eps \|_{\dot H^{2K+4}}  \notag \\
        & \quad +C \| \na^{2K+5} \epu \|_{L^2} \| \na \Dein \eps \|_{L^\infty(B(0,2))}
        + C \sum_{j=1}^{2K+4}\| \na^{2K+5-j} \epu \|_{L^\infty} \| \na^{j+1} \Dein \eps \|_{L^2} \notag \\
        & \le C \| \epu \|_{H^{2K+6}} \| \eps \|_{H^{2K+4}}^2.
         \label{Hk: III}
    \end{align}

    \noindent \underline{\textit{Estimate of nonlinear term $III$.}} For $III$, by integration by parts, there exists a uniform constant $C=C(K,\mu)>0$ such that
    \begin{align}
        III
        & = \left(\Delta^{K+2} \left( \na \eps \cdot \na \Dein \eps \right),\Delta^{K+2} \eps\right )_{L^2} + (1-\mu)\left(\Delta^{K+2} (\eps^2),\Delta^{K+2} \eps\right )_{L^2} \notag \\
        & \le  C  \left( \| \eps \|_{L^\infty} + \| \na^2 \Dein \eps \|_{L^\infty}  \right)  \| \eps \|_{H^{2K+4}}^2
        +  C \sum_{j=2}^{2K+4} \| \eps \|_{H^{2K+6-j}} \| \eps \|_{H^{j}} \| \eps \|_{\dot H^{2K+4}}
        \notag
        \\
        & \quad +C \sum_{j=1}^{2K+3} \| \eps \|_{H^{2K+5-j}} \| \eps \|_{H^{j+1}} \| \eps \|_{\dot H^{2K+4}} \notag\\
        & \le C \| \eps \|_{H^{2K+4}}^3.
        \label{HK: II}
    \end{align}
    
    \noindent \underline{\textit{Estimate of the modulation term $IV$.}} For the modulation term $IV$, recalling \eqref{G: new expression}, \eqref{Taylor expansion: Delta eps}, \eqref{nonlinear term: epu} and \eqref{linear term: epu},
    there exists a uniform constant $C>0$ such that
    \begin{align*}
        \Big\| G[\lambda,\Psi, \epu] - \lambda^{2-4\beta} \Delta \eps \Big\|_{\dot H^{2K+4}}
        \le C \lambda^{2-4\beta} + C \sum_{j=0}^K |c_j|,
    \end{align*}
    which, by the non-negativity of $\lambda$ (see \eqref{lambda: form}), yields that
    \begin{align}
        IV & = \left( \Delta^{K+2} G[\lambda,\Psi, \epu], \Delta^{K+2} \eps \right)_{L^2} \notag \\
        & =  \lambda^{2-4 \beta} \left( \Delta^{K+2} \Delta \eps, \Delta^{K+2} \eps \right)_{L^2} + \left( \Delta^{K+2} \left( G[\lambda,\Psi, \epu]-\lambda^{2-4\beta} \Delta \eps \right), \Delta^{K+2} \eps \right)_{L^2}
        \notag
        \\
        & = - \lambda^{2-4\beta} \| \na \Delta^{K+2} \eps \|_{L^2} +
        \left( \Delta^{K+2} \left( G[\lambda,\Psi, \epu]-\lambda^{2-4\beta} \Delta \eps \right), \Delta^{K+2} \eps \right)_{L^2} 
        \notag
        \\
        & \le C \left( \lambda^{2-4\beta} +  \sum_{j=0}^K |c_j| \right) \| \eps \|_{\dot H^{2K+4}}.
        \label{HK: IV}
    \end{align}

    \noindent \underline{\textit{Conclusion of the estimates.}} Noting that $I$ has been handled in Proposition \ref{prop: core H^K}, combining \eqref{coercivity in H^K}, \eqref{HK: II}, \eqref{Hk: III} and \eqref{HK: IV} concludes that
    \begin{align*}
    \frac{d}{d\tau} \frac{1}{2} \| \eps \|_{\dot H^{2K+4}}^2
    & \le - \frac{1}{8} \| \eps \|_{\dot H^{2K+4}}^2 + C \| \eps \|_{L^2}^2 \\
    & \quad \;+C\left( \| \eps \|_{H^{2K+4}}^2  + \lambda^{2-4\beta} + \sum_{j=0}^{K} |c_j| \right) \| \eps \|_{H^{2K+4}},
    \end{align*}
    which, together with Young's inequality, yields \eqref{stable part: higher regu}. Hence, recalling the bootstrap assumptions \eqref{bootstrap assum: L2 stable}, \eqref{bootstrap assum: Hm stable}, \eqref{bootstrap assum: real unstable} and \eqref{bootstrap assum: fake unstable}, by Gronwall's inequality,
    \begin{align}
        & \quad \| \eps(\tau) \|_{\dot H^{2K+4}}^2 \notag \\
        & \le \| \eps(0) \|_{\dot H^{2K+4}}^2 e^{-\frac{1}{16} \tau}
        +C \left(\delta_1^2 + \delta_2^4 + \delta_3^2 + \delta_0^{4-8\beta} + \sum_{j=0}^K \delta_j^2  \right) \int_0^\tau e^{-\frac{1}{16} (\tau-s)} e^{-\delta_g s} ds \notag \\
        & \le C \left(\delta_0^2 +\delta_1^2 + \delta_2^4 + \delta_3^2 + \delta_0^{4-8\beta} + \sum_{j=0}^K \delta_j^2  \right)  e^{-\delta_g \tau}, \quad \forall \; \tau \in [0,\tau^*].
        \label{delta: choice 2}
    \end{align}
    Combining the assumptions on the coefficients \eqref{delta g: restriction} and \eqref{delta i: restriction}, this yields that
    \[
    \| \eps(\tau) \|_{\dot H^{2K+4}} \le \frac{1}{2} \delta_2 e^{-\frac{1}{2} \delta_g \tau}, \qquad \forall \; \tau \in [0,\tau^*],
    \]
    and hence we have concluded the result.
\end{proof}

\subsubsection{Estimate of $\ep_{u,fake}$.}
This section is dedicated to improving the bootstrap assumption \eqref{bootstrap assum: fake unstable} under the bootstrap assumptions of Proposition \ref{prop: bootstrap}. The main result is stated as follows:

\begin{lemma}[Estimate of $\ep_{u,fake}$]
\label{lemma: estimate of unstable fake}
    Under the assumptions of Proposition \ref{prop: bootstrap}, for any $j_0+1 \le j \le K$, there exists a constant $C=C(\mu, K,j_0, \beta)>0$ such that
    \begin{align}
        \frac{d}{d\tau} c_j^2 \le - \frac{1}{j_0} c_j^2 +   C \lambda^{8 -4\beta} + C \sum_{i=0}^{j-1} |c_i|^2 + C \sum_{i=0}^K |c_i|^4, \quad \forall \; \tau \in [0,\tau^*].
        \label{fake unstable: estimate}
    \end{align}
    In particular, the bootstrap assumption \eqref{bootstrap assum: fake unstable} can be improved to
    \be
    |c_j (\tau)| \le \frac{1}{2} \delta_j e^{-\frac{7\delta_g}{10} \tau}, \quad \forall \; \tau \in [0,\tau^*].
    \label{fake unstable: improvement of bootstrap}
    \ee
\end{lemma}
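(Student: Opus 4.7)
The plan is a scalar ODE energy estimate followed by Grönwall, carried out one index at a time for $j\in\{j_0+1,\ldots,K\}$. Starting from the modulation equation \eqref{ODE system for unstable part}, which for $j>j_0$ reads
\[
\dot c_j = \tfrac{j_0-j}{j_0}\,c_j + \sum_{i=0}^{j-1}\sigma_{i,j}\,c_i + \lambda^{2-4\beta}[\Delta\Psi]_j + [N(\epu)]_j,
\]
the first observation is that $\frac{j_0-j}{j_0}\le -\frac{1}{j_0}$ since $j\ge j_0+1$, so the linear part supplies an intrinsic damping of rate $\frac{1}{j_0}$. Multiplying by $c_j$ yields $\tfrac12\frac{d}{d\tau}c_j^2 \le -\tfrac{1}{j_0}c_j^2 + (\text{cross terms})$, and the strategy is to absorb at most half of this damping into the cross terms.

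Each cross term is handled by a weighted Young inequality $|ab|\le \frac{\eta}{2}a^2 + \frac{1}{2\eta}b^2$ with a small $\eta=\eta(j_0,K)>0$ chosen once and for all. The linear coupling $\sigma_{i,j}c_ic_j$ produces $C_\eta c_i^2$. The forcing $\lambda^{2-4\beta}[\Delta\Psi]_j c_j$ produces $C_\eta\lambda^{4-8\beta}|[\Delta\Psi]_j|^2$, and Lemma~\ref{lemma: preliminaries, Psi} gives $|[\Delta\Psi]_j|\le C$ uniformly under the bootstrap (even at $j=K$, since the extra $\|\eps\|_{H^{2K+4}}$ there is bounded by $\delta_2$). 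The nonlinear piece $[N(\epu)]_j c_j$ produces $C_\eta|[N(\epu)]_j|^2$, and by Lemma~\ref{lemma: preliminaries epu} this is $\le C\bigl(\sum_{i=0}^K c_i^2\bigr)^2 \le C\sum_{i=0}^K c_i^4$. Collecting everything yields the differential inequality \eqref{fake unstable: estimate}.

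To close the bootstrap, apply Grönwall's inequality with the damping rate $\frac{1}{j_0}$. The separation of rates is generous: $\delta_g\ll\frac{1}{j_0}$ by \eqref{delta g: restriction}, and $2-4\beta>\delta_g$ as well, so the damping $e^{-\tau/j_0}$ dominates every forcing rate. Under the bootstrap, $\lambda^{4-8\beta}\le \delta_0^{4-8\beta}e^{-(2-4\beta)\tau}$; the linear sum $\sum_{i<j}c_i^2$ splits into the $d_{real}$-part bounded by $C\delta_3^2 e^{-7\delta_g\tau/5}$ via \eqref{bootstrap assum: real unstable} and the fake part bounded by $\sum_{i=j_0+1}^{j-1}\delta_i^2 e^{-7\delta_g\tau/5}$ via \eqref{bootstrap assum: fake unstable}; the quartic term decays even faster since each $c_i$ is $O(\delta_i)$. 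Grönwall then gives
\[
c_j^2(\tau)\le C\bigl(\delta_0^2 + \delta_0^{4-8\beta} + \delta_3^2 + \sum_{i=j_0+1}^{j-1}\delta_i^2\bigr)e^{-7\delta_g\tau/5},
\]
and the ordering \eqref{delta i: restriction}, in particular $\delta_{j-1}\ll\delta_j$, shrinks the right-hand side below $\tfrac14\delta_j^2 e^{-7\delta_g\tau/5}$, which is exactly \eqref{fake unstable: improvement of bootstrap}.

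The expected obstacle is purely combinatorial bookkeeping rather than analytical: the constant $C_\eta$ produced by the Young step must be fixed \emph{before} the size hierarchy $\delta_{j_0+1}\ll\cdots\ll\delta_K$, so that the ratios $\delta_{j-1}/\delta_j$ can be taken small enough to absorb the amplification uniformly in $j$. No singular-weight coercivity or nonlocal inversion is required here, so every step is substantially cleaner than the corresponding $L_w^2$ estimate for $\eps$ in Lemma~\ref{lemma: L2 estimate of eps}.
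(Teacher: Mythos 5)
Your proposal is correct and follows essentially the same route as the paper: multiply the modulation ODE by $c_j$, exploit the intrinsic damping $\frac{j_0-j}{j_0}\le-\frac{1}{j_0}$, control the forcing and coupling terms via Young's inequality together with Lemma \ref{lemma: preliminaries, Psi} and Lemma \ref{lemma: preliminaries epu}, and then close by Gr\"onwall using the bootstrap bounds and the hierarchy \eqref{delta i: restriction}. The only cosmetic difference is that you carry the forcing as $\lambda^{4-8\beta}$ (the natural output of Young's inequality, and the power actually used in the paper's Gr\"onwall step), whereas the displayed estimate \eqref{fake unstable: estimate} writes $\lambda^{8-4\beta}$; since $\lambda\le\lambda_0\ll1$ this is harmless.
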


\begin{proof}
Recalling \eqref{ODE system for unstable part}, $c_j$ solves the ODE
\[
\dot c_{j} =  \frac{j_0 -j}{j_0} c_j + \lambda^{2-4 \beta} [\Delta \Psi]_{j} +  \sum_{i=0}^{j-1} \sigma_{i,j} c_i + [N(\epu)]_{j}, \quad \text{  for any } j_0 < j \le K.
\]
Then by Young's inequality, \eqref{Delta Psi: estimate on Taylor coeff} and \eqref{nonlinear term: epu}, there exists $C=C(K, j_0, \mu)>0$ such that
\begin{align*}
    \frac{1}{2}\frac{d}{d\tau}  c_j^2 
     = c_j \dot c_j &= c_j \left( \frac{j_0 -j}{j_0} c_j + \lambda^{2-4 \beta} [\Delta \Psi]_{j} + \sum_{i=0}^{j-1} \sigma_{i,j} c_i + [N(\epu)]_j \right) \\
    & = \frac{j_0-j}{j_0} c_j^2 + \lambda^{2-4\beta}  [\Delta \Psi]_j c_j + \sum_{i=0}^{j-1} \sigma_{i,j} c_i c_j + [N(\epu)]_j c_j \\
    & \le - \frac{1}{2j_0} c_j^2 +  C \lambda^{8 -4\beta} + C \sum_{i=0}^{j-1} |c_i|^2 + C \sum_{i=0}^K |c_i|^4.
\end{align*}

By Gronwall's inequality, \eqref{delta g: restriction}  \eqref{delta i: restriction},  \eqref{bootstrap: initial condition}, and bootstrap assumption \eqref{bootstrap assum: real unstable} and \eqref{bootstrap assum: fake unstable}, 
there exists a constant $C= C(K, j_0,\beta)>0$ such that
\begin{align}
    c_j^2(\tau) 
    & \le e^{-\frac{1}{j_0} \tau} c_{j}^2(0) 
    + C \int_0^\tau e^{-\frac{1}{j_0} (\tau-s)} \Bigg[ \lambda_0^{4-8\beta} e^{-(2-4\beta) s}  + 
    \left( \delta_3^2+
    \sum_{i=j_0+1}^{j-1} \delta_i^2 \right) e^{-\frac{7}{5} \delta_g s}  \notag \\
    & \qquad \qquad \qquad \qquad \qquad \qquad \qquad \qquad \qquad + \left( \delta_3^4 + \sum_{i=j_0+1}^K \delta_i^4 \right) e^{-\frac{14}{5} \delta_g s} \Bigg] ds \notag \\
    & \le  e^{-\frac{1}{j_0} \tau} c_{j}^2(0) + C \left( \lambda_0^{4-8\beta} + \delta_3^2 + \sum_{j_0+1 \le i \le j-1} \delta_i^2  + \delta_3^4 + \sum_{i=j_0+1}^K \delta_i^4 \right) e^{-\frac{7}{5} \delta_g \tau}
    \notag
    \\
    & \le C \left( \delta_0^2 + \lambda_0^{4-8\beta} + \delta_3^2 + \sum_{i=j_0+1}^{j-1} \delta_i^2 \right) e^{-\frac{7}{5}\delta_g \tau} \le \frac{1}{4} \delta_j^2 e^{-\frac{7}{5} \delta_g \tau}, \;\; \forall \; \tau \in [0,\tau^*],
    \label{delta: choice 3}
\end{align}
which concludes the proof.
\end{proof}

\subsection{Control of real unstable part.}

\mbox{}

\begin{proof}[Proof of Proposition \ref{prop: bootstrap}]
    
    \mbox{}

    \noindent \underline{\textit{Improvement of the bootstrap assumptions.}} Arguing by contradiction, suppose that there exists an initial triple $(\lambda_0, \ep_{u,fake}(0), \eps(0))$ satisfying \eqref{bootstrap: initial condition} such that for any $\ep_{u,fake}(0)$ with $d_{real}(0) \le \delta_3$, the exit time $\tau^*$ defined in \eqref{exit time} is always finite. Then, by Lemma \ref{lemma: L2 estimate of eps}, Lemma \ref{lemma: Hm estimate of eps} and Lemma \ref{lemma: estimate of unstable fake}, combining with the continuity of the solution to system \eqref{renormalization KS} with respect to $\tau$, there exists $0 < \epsilon_{\tau^*} \ll 1$ such that \eqref{bootstrap assum: L2 stable}, \eqref{bootstrap assum: Hm stable} and \eqref{bootstrap assum: fake unstable} hold for all $\tau \in [0,\tau^* + \epsilon_{\tau^*}]$.
    
\vspace{0.3cm}

\noindent \underline{\textit{Outer-going flux property.}}
In light of the preceding argument, the only possible mechanism by which the solution exits the bootstrap regime is the failure of the bootstrap assumption for the real unstable component $\ep_{u,real}$, specifically when this assumption ceases to hold on $[0,\tau]$ for some $\tau > \tau^*$. Moreover, by the local well-posedness of the system \eqref{equation: KS with damping}, it follows that the mapping $\tau \mapsto d_{real}(\tau)$ is continuous. If we define the set
\[
B(\tau) := \Big\{ \{c_j\}_{j=0}^{j_0} \in \RR^{j_0+1}: \sqrt{ \frac{1}{2}\sum_{j=1}^{j_0-1} c_j^2 + \frac{10}{\delta_g^2} c_0^2 + \frac{1}{4 |\sigma_{0,j_0}| } c_{j_0}^2} \le \delta_3 e^{-\frac{7}{10} \delta_g \tau} \Big\},
\]
then by the contradiction assumption, the following holds
\[
\begin{cases}
    \{ c_j(\tau) \}_{j=0}^{j_0} \in B(\tau), & \forall \; \tau \in [0,\tau^*], \\
    \{ c_j(\tau) \}_{j=0}^{j_0} \in \partial B(\tau), & \tau = \tau^*.
\end{cases}
\]
 Recalling the modulation ODE system satisfied by the coefficients $\{c_j\}_{j=0}^{j_0}$ as given in \eqref{ODE system for unstable part}, and invoking Young's inequality in conjunction with Lemma \ref{lemma: preliminaries, Psi} and Lemma \ref{lemma: preliminaries epu}, there exists a constant $C=C(\delta_g, |\sigma_{0,j_0}|,K,j_0)>0$ independent of $\{\delta_i\}$, such that the quantity $d_{real}$ defined in \eqref{bootrap real unstable: norm} satisfies
\begin{align*}
    & \quad \frac{d}{d\tau}d_{real}^2(\tau)
    = \sum_{j=1}^{j_0-1} c_j \dot c_j + \frac{20}{\delta_g^2} c_0 \dot c_0+ \frac{1}{2 |\sigma_{0,j_0}|} c_{j_0} \dot c_{j_0}  \\
    & = \sum_{j=1}^{j_0-1} \frac{j_0-j}{j_0} c_j^2 + \frac{20}{\delta_g^2} c_0^2 + \frac{1}{2} c_0 c_{j_0}  + \lambda^{2-4\beta}\left( \sum_{j=1}^{j_0-1} c_j  [\Delta \Psi]_j + \frac{20}{\delta_g^2} c_0 [\Delta \Psi]_0 + \frac{1}{2 |\sigma_{0,j_0}|}c_{j_0} [\Delta \Psi]_{j_0} \right) \\
    & \qquad + \sum_{j=1}^{j_0 -1} [N(\epu)]_j c_j + \frac{20}{\delta_g^2} c_0 [N(\epu)]_0 + \frac{1}{2 |\sigma_{0,j_0}|} c_{j_0} [N(\epu)]_{j_0} \\
    & \ge - C \lambda^{4-8\beta}\sum_{j=0}^{j_0} \Big|[\Delta \Psi]_j \Big|^2 - \frac{\delta_g^2}{160} c_{j_0}^2 - C \sum_{j=0}^K \big| [N(\epu)]_{j} \big|^2 \\
    &\ge - C \lambda^{4-8\beta}
    - \frac{\delta_g^2 |\sigma_{0,j_0}|}{40} d_{real}^2
     - Cd_{real}^4  - C \sum_{j=j_0+1}^K |c_j|^4.
\end{align*}
Consequently, under the contradiction assumption that $e^{\frac{7}{10}\delta_g \tau^*}d_{real}(\tau^*) = \delta_3$, 
and in light of bootstrap assumptions \eqref{bootstrap: initial condition}, \eqref{bootstrap assum: real unstable}, and \eqref{bootstrap assum: fake unstable}, together with coefficient restrictions \eqref{delta g: restriction} and \eqref{delta i: restriction}, we deduce the outer-going flux property on $\partial B(\tau^*)$:
\begin{align}
    & \quad \frac{d}{d\tau} \left( e^{\frac{7}{5} \delta_g \tau} d_{real}^2(\tau) \right) \Bigg|_{\tau = \tau^*}
    = \left( \frac{7}{5} \delta_g e^{\frac{7}{5} \delta_g \tau} d_{real}^2(\tau) + e^{\frac{7}{5} \delta_g \tau} \frac{d}{d\tau} d_{real}^2(\tau) \right) \Big|_{\tau = \tau^*} \notag\\
    & \ge  \frac{7}{5} \delta_g e^{\frac{7}{5} \delta_g \tau^*} d_{real}^2(\tau^*)
    - C \lambda_0^{4-8\beta}e^{\frac{7}{5} \delta_g \tau^*} e^{-(2-4\beta) \tau^*} - \frac{\delta_g^2 |\sigma_{0,j_0}|}{40} e^{\frac{7}{5} \delta_g \tau^*} d_{real}^2(\tau^*) \notag\\
    & \qquad \qquad 
    -C e^{\frac{7}{5} \delta_g \tau^*} d_{real}^4(\tau^*) -  C \sum_{j=j_0+1}^K e^{\frac{7}{5} \delta_g \tau^*} |c_j(\tau^*)|^4 \notag
    \\
    & \ge \frac{7}{5} \delta_g \delta_3^2 - C \lambda_0^{4-8\beta} - \frac{\delta_g^2 |\sigma_{0,j_0}|}{40} \delta_3^2 
    -C \delta_3^4 - C \sum_{j=j_0+1}^K \delta_j^4
    \ge \delta_g \delta_3^2>0.
    \label{outergoing-flux property}
\end{align}

\vspace{0.3cm}
\noindent \underline{\textit{Brouwer's topological argument.}} 
Firstly, for any $\delta >0$, we define the following convex subset on $\RR^{j_0+1}$:
\[
\calB_{real}(0,\delta)
:= \left\{ \{ c_j \}_{j=0}^{j_0} \in \RR^{j_0+1}:  \sqrt{ \frac{1}{2}\sum_{j=1}^{j_0-1} c_j^2 + \frac{10}{\delta_g^2} c_0^2 + \frac{1}{4 |\sigma_{0,j_0}|} c_{j_0}^2} < \delta  \right\}.
\]
By the local well-posedness theory for \eqref{equation: KS with damping} and contradiction assumption \eqref{exit time}, 
we establish the continuity of the map $\ep_{u,real} (0) \mapsto \ep_{u,real}(\tau^*(\ep_{u,real}(0)))$, where $\tau^*(\ep_{u,real}(0))$ denotes the exit time associated with 
$\left( \lambda_0, \ep_{u,real}(0), \ep_{u,fake}(0), \eps(0) \right)$ satisfying the initial condition \eqref{bootstrap: initial condition}. Accordingly, we define a mapping $\Phi: \overline{\calB_{real} (0,1)} \mapsto \partial \calB_{real} (0,1) $ as follows:
\begin{align*}
 \qquad  \{d_j\}_{j=0}^{j_0} \in \overline{\calB_{real} (0,1)}
&\mapsto \{c_{j}(0)\}_{j=0}^{j_0} := \{ \delta_3 d_j\}_{j=0}^{j_0}  \in \overline{\calB_{\RR^{j_0+1}} (0,\delta_3)} \\
&  \mapsto  \ep_{u,real}(0,y) =  \sum_{j=0}^{j_0} c_j(0) \chi(y) |y|^{2j}  \\
& \mapsto \ep_{u,real}(\tau^*,y) =\sum_{j=0}^{j_0} c_j(\tau^*) \chi(y) |y|^{2j} \\
&
 \mapsto \{ d_j(\tau^*) \}_{j=0}^{j_0} := \Big\{  \frac{1}{\delta_3}e^{\frac{7}{10} \delta_g \tau^*}c_j(\tau^*) \Big\}_{j=0}^{j_0} \in  \partial \calB_{real}(0,1),
\end{align*}
whose continuity is ensured by the local well-posedness of \eqref{equation: KS with damping}. In particular, when $\{d_j\}_{j=0}^{j_0} \in \partial \calB_{real} (0,1)$, by the outer-going flux property of the flow as indicated in \eqref{outergoing-flux property}, $\tau =0$ is exactly the exit time, and it leads to $\Phi = Id$ on the boundary $\partial \calB_{real}(0,1)$. However, with the continuity of $\Phi$ on the nonempty compact convex set $\overline{\calB_{real} (0,1)}$, we conclude that $-\Phi$ has a fixed point on the boundary $\partial \calB_{real}(0,1)$, which contradicts to the assertion that $\Phi =Id$ on the boundary. Consequently, the proof of Proposition \ref{prop: bootstrap} has been completed.
\end{proof}

\subsection{Existence of finite blowup solution to \eqref{equation: KS with damping} with finite mass.} This section is devoted to the study of the existence of the finite-time blowup solution to \eqref{equation: KS with damping} for any fixed $0 \le \mu < \frac{1}{3}$ and concludes the paper.

\begin{proof}[Proof of \textcolor{black}{Theorem \ref{thm: existence of blowup}}]
    Firstly, for any fixed $\mu \in \left[0,\frac{1}{3} \right)$, we choose $Q$ constructed in Lemma \ref{lem: solve ODE} and fix the weight $w$ as specified in Proposition \ref{prop: coer}. Then as indicated in Proposition \ref{prop: bootstrap}, there exist $\delta_g \ll 1$ and $\{ \delta_i \}$ satisfying \eqref{delta g: restriction} and \eqref{delta i: restriction}, such that the result shown in Proposition \ref{prop: bootstrap} holds.
    
    \noindent \underline{\textit{Construction of nonnegative initial data with finite mass.}}
    Firstly, since $Q\in H^{2K+4}$ as guaranteed by Lemma \ref{lem: solve ODE}, we can choose $\lambda_0 \ll \delta_0$ sufficiently small and $R_2 \gg 1$ sufficiently large such that the following estimates hold:
    \be
    \| (\chi_{R_2} -1) Q \|_{L_w^2} + \| (\chi_{R_2} -1) Q \|_{\dot H^{2K+4}} \ll \delta_0,
    \label{cut off: smallness}
    \ee
    where $\chi_{R_2}$ is a cut-off function on $B(0,R_2)$ defined by \eqref{def: cut-off on B(0,R)}.
    It then follows that there exists $\{ c_j(0) \}_{j=0}^{j_0}$ such that Proposition \ref{prop: bootstrap} holds with the initial datum
    \be
    \left(\lambda_0, \ep_s(0,y), \ep_{u,real}(0,y), \ep_{u,fake}(0,y) \right)
    := \left(\lambda_0, (\chi_{R_2} -1) Q, \sum_{j=0}^{j_0} c_j(0) \chi(y) |y|^{2j} ,0\right).
    \label{blowup to KS: initial data}
    \ee
    In other words, there exists $\ep_{u,real}(0,y)$ such that the solution to \eqref{renormalization KS} with initial data 
    \begin{align*}
    \Psi_0 &= Q + \eps(0) + \ep_{u,real}(0) + \ep_{u,fake}(0)
    =\chi_{R_2}Q + \sum_{j=0}^{j_0} c_{j}(0) \chi(y) |y|^{2j} \subset C_0^\infty(\RR^3),
    \end{align*}
    globally exists in $\tau$ and satisfies \eqref{bootstrap assum: L2 stable}-\eqref{bootstrap assum: fake unstable} on $[0,+\infty)$, and the corresponding solution $\Psi(\tau)$ can be decomposed into $\Psi(\tau) = Q + \ep(\tau)$ with
    \begin{align*}
        \| \ep(\tau) \|_{H^{2K+4}}
         & \le  \| \epu(\tau) \|_{H^{2K+4}} + \| \eps(\tau) \|_{H^{2K+4}} \\
        & \le C \left( \delta_1 + \delta_2 + \delta_3 + \sum_{j=j_0+1}^K \delta_j \right) e^{-\frac{1}{2} \delta_g \tau}, \quad \forall \; \tau >0,
    \end{align*}
    for some universal $C>0$. In addition, in view of the fact that $Q$ is radially decreasing, together with the condition \eqref{delta i: restriction}, we obtain that
    \[
    \begin{cases}
     \Psi_0(y) \ge Q(2) - C \sum_{j=0}^{j_0}|c_j(0)| \ge \frac{1}{2} Q(2) >0, &  \forall \; |y| \le 2, 
     \vspace{0.2cm}
     \\
    \Psi_0(y) = \chi_{R_2}(y)Q(y) \ge 0, & \forall \; |y| \ge 2,
    \end{cases}
    \]
    which yields the non-negativity of $\Psi_0$. Now, returning to the original equation \eqref{equation: KS with damping}, and recalling \eqref{ss coordinte}, \eqref{ss renormalization} and \eqref{lambda: form}, 
    the nonnegativity of initial density $\rho_0$ can be guaranteed:
    \[
    \rho_0(x) = \frac{1}{\lambda_0^2} \Psi_0\left( \frac{x}{\lambda_0^{2\beta}} \right) \in C_0^\infty(\RR^3).
    \]

    \noindent \underline{\textit{Finite time blowup.}} Recalling \eqref{ss coordinte}, we know that
    \begin{align*}
        \frac{d\lambda}{dt} = \frac{d\lambda}{d\tau} \frac{d\tau}{dt}
        = -\frac{\lambda}{2} \frac{1}{\lambda^2} = - \frac{1}{2\lambda}, \quad \text{with } \lambda(0) = \lambda_0,
    \end{align*}
    which implies that $\lambda(t)$ blows up at $T=\lambda_0^2$ and $\lambda(t)$ is given by
    \[
    \lambda(t) = \sqrt{T-t} = \sqrt{\lambda_0^2 -t}, \quad \forall \; t \in [0,T).
    \]
    Consequently, the solution to \eqref{equation: KS with damping} with initial data $ \rho_0$ is of form
    \[
    \rho(t,x) = \frac{1}{T-t} \left( Q + \e \right)\left(t, \frac{x}{(T-t)^{\beta}}\right),
    \]
    with
    \[
    \| \ep(\tau) \|_{H^{2K+4}} \le \epsilon_0 (T-t)^{\frac{\delta_g}{2}},
    \]
    for some $0 < \epsilon_0 \ll 1$. Consequently, we have concluded the proof of Theorem \ref{thm: existence of blowup}.

\end{proof}

\normalem
\bibliographystyle{siam}
\bibliography{Bib-1}

\end{document}